\theoremstyle{plain}
\newtheorem{theorem}{Theorem}[section]
\newtheorem{lemma}[theorem]{Lemma}
\newtheorem{proposition}[theorem]{Proposition}
\newtheorem{corollary}[theorem]{Corollary}
\theoremstyle{definition}
\newtheorem{definition}[theorem]{Definition}
\newtheorem{problem}{Problem}
\newtheorem{remark}[theorem]{Remark}
\theoremstyle{remark}
\mathchardef\emptyset="001F
\numberwithin{equation}{section}
\newcommand{\R}{{\mathbb R}}
\newcommand{\N}{{\mathbb N}}
\newcommand{\PP}{{\mathcal P}_1}
\newcommand{\Pc}{{\mathcal P}_c}
\newcommand{\U}{{\mathcal U}}
\newcommand{\supp}{\mathrm{supp}}
\newcommand{\Pt}[1]{\left( #1 \right)}
\newcommand{\Pa}[1]{\langle #1 \rangle}
\renewcommand{\r}[1]{\eqref{#1}}
\renewcommand{\cal}[1]{\mathcal{#1}}
\newcommand{\bi}{\begin{itemize}}
\newcommand{\ei}{\end{itemize}}
\newcommand{\be}{\begin{enumerate}}
\newcommand{\ee}{\end{enumerate}}
\newcommand{\bd}{\begin{description}}
\newcommand{\ed}{\end{description}}
\renewcommand{\i}{\item}
\newcommand{\bqn}{\begin{eqnarray}}
\newcommand{\eqn}{\end{eqnarray}}
\newcommand{\eqnn}{\nonumber\end{eqnarray}}
\newcommand{\eqnl}[1]{\label{#1}\end{eqnarray}}
\newcommand{\nn}{\nonumber\\}
\newcommand{\ba}[1]{\begin{array}{#1}}
\newcommand{\ea}{\end{array}}
\newcommand{\bproof}{\begin{proof}}
\newcommand{\eproof}{\end{proof}}
\newcommand{\bl}{\begin{lemma}}
\newcommand{\el}{\end{lemma}}
\newcommand{\bp}{\begin{proposition}}
\newcommand{\ep}{\end{proposition}}
\newcommand{\bc}{\begin{corollary}}
\newcommand{\ec}{\end{corollary}}
\newcommand{\bdeff}{\begin{definition}}
\newcommand{\edeff}{\end{definition}}
\newcommand{\brem}{\begin{remark}\rm}
\newcommand{\erem}{\end{remark}}
\newcommand{\eps}{\varepsilon}
\newcommand{\om}{\omega}
\newcommand{\Id}{\mathrm{Id}}
\newcommand{\weak}{\rightharpoonup}
\renewcommand{\H}{\mathbb{H}}
\newcommand{\Y}{{\mathcal Y}}
\newcommand{\W}{{\mathcal W}}
\newcommand{\Lip}{{\mathrm{Lip}}}
\newcommand{\Jac}{{\mathbf{D}}}
\begin{document}

\title{Mean-Field Pontryagin Maximum Principle}

%\titlerunning{Generalized Wasserstein distance}

\author{Mattia Bongini\thanks{Technische Universit\"at M\"unchen, Fakult\"at Mathematik, Boltzmannstrasse 3
 D-85748, Garching bei M\"unchen, Germany. {\tt mattia.bongini@ma.tum.de}}, Massimo Fornasier\thanks{Technische Universit\"at M\"unchen, Fakult\"at Mathematik, Boltzmannstrasse 3
 D-85748, Garching bei M\"unchen, Germany. {\tt massimo.fornasier@ma.tum.de}}, Francesco Rossi\thanks{Aix Marseille Universit\'e, CNRS, ENSAM, Universit\'e de Toulon, LSIS UMR 7296, 13397, Marseille, France. {\tt francesco.rossi@lsis.org}}, Francesco Solombrino\thanks{Technische Universit\"at M\"unchen, Fakult\"at Mathematik, Boltzmannstrasse 3
 D-85748, Garching bei M\"unchen, Germany. {\tt francesco.solombrino@ma.tum.de}}}

\maketitle

\begin{abstract}
We derive a Maximum Principle for optimal control problems with constraints given by the coupling of a system of ODEs and a PDE of Vlasov-type. Such problems arise naturally as $\Gamma$-limits of optimal control problems subject to ODE constraints, modeling, for instance, external interventions on crowd dynamics. We obtain these first-order optimality conditions in the form of Hamiltonian flows in the Wasserstein space of probability measures with forward-backward boundary conditions with respect to the first and second marginals, respectively. In particular, we recover the equations and their solutions by means of a constructive procedure, which can be seen as the mean-field limit of the Pontryagin Maximum Principle applied to the discrete optimal control problems, under a suitable scaling of the adjoint variables. 
\end{abstract}

%\vspace{.3cm}
{\bf Keywords: }  Sparse optimal control, mean-field limit, $\Gamma$-limit,  optimal control with ODE-PDE constraints, subdifferential calculus, Hamiltonian flows.\\

%{\bf MSC code: } 

%\vspace{1cm}
%\tableofcontents

\section{Introduction}\label{s:intro}

The study of large crowds of interacting agents has received a significantly growing attention in the mathematical literature of the last decade with applications in biology, ecology,  social sciences, and economics. Starting from the seminal papers \cite{helbing2000simulating,hughes2002continuum,reynolds1987flocks,vicsek1995novel}, emphasis has been put on self-organization, i.e., the formation of macroscopic patterns from the superimposition of simple, reiterated, binary interactions.  A quintessential situation is the convergence of a crowd to a common state, which may be called consensus, agreement, or rendezvous. Several examples show that spontaneous convergence to pattern formation is not always guaranteed, e.g., for highly dispersed initial configurations in consensus problems \cite{cucker2011general,CS,d2006self,motsch2014heterophilious}, hence, the issue of controlling and stabilizing these systems arises naturally.
Two major interpretations of control of multiagent systems have received much attention: on the one hand, with the \emph{decentralized approach}, the problem is recast into a game-theoretic framework, where agents optimize their individual cost and solutions correspond to Nash equilibria.  On the other hand, following the concept of \emph{centralized intervention}, an external policy-maker controlling the dynamics is introduced, with the task of minimizing its intervention cost. 

When dealing with large populations, in both cases one faces the well-known problem of the \emph{curse of dimensionality}, term first coined by Bellman precisely in the context of dynamic optimization: the complexity of numerical computations of the solutions of the above problems blows up as the size of the population increases. A possible way out is the so-called \emph{mean-field approach}, where the individual influence of the entire population on the dynamics of a single agent is replaced by an averaged one. This substitution principle results in a unique mean-field equation and allows the computation of solutions, cutting loose from the dimensionality.

In the game-theoretic setting, the mean-field approach has led to the development of \emph{mean-field games} \cite{huang2003individual,lasry2007mean}, which model populations whose agents are competing \emph{freely} with the others towards the maximization of their individual payoff, as for instance in the financial market. The landmark feature of such systems is their capability to autonomously stabilize without external intervention. However, in reality, societies exhibit either convergence to undesired patterns or tendencies toward instability that only an external government can successfully dominate. The need of such interventions, together with the limited amount of resources that governments have at their disposal, makes the design of parsimonious stabilization strategies a key issue, which has been extensively studied in the context of dynamics given by systems of ODEs, see \cite{bofo13,bongini2014sparse,bonginijunge2014sparse,bongini2015conditional,caponigro2015sparse}.

Nevertheless, the concept of sparse control has to be handled with care when trying to generalize it at the level of a mean-field dynamics. Indeed, the indistinguishability of agents is a fundamental property of the mean-field setting, and it is in sharp contrast with controls acting sparsely on specific agents. Figuratively, trying to stabilize a huge crowds with these controls is like steering a river by means of toothpicks! A first solution to this ambiguity was given in \cite{bensoussan2013mean,MFOC}, where the control is defined as a locally Lipschitz feedback control with respect to the state variables, and sparsity refers to its property of having a small support. Such concept was successfully used in \cite{piccoli2014control} to implement sparse stabilizers for a consensus problem. This interpretation of sparsity appears also in the framework of the control of more classical PDEs, see \cite{clason2011duality,pieper2013priori,privat2013optimal,stadler2009elliptic}. An 
alternative solution for a proper definition of sparse mean-field control was proposed in \cite{fornasier2014mean}, where the control is sparsely applied on a finite number of individuals immersed in the mean-field dynamics of the rest of the population, resulting in a system where the controlled ODEs are coupled with a control-free mean-field PDE (but indirectly controlled via the coupling). The same kind of control was considered in \cite{albi2015invisible} to model the efficient evacuation of a large crowd of pedestrians with the help of very few informed agents.

While in the context of mean-field games and optimal control problems with PDE constraints, first-order optimality conditions have received enormous attention, see for instance \cite{andersson2011maximum,burger2013mean,carmona2013control,raymond1999hamiltonian}, up to now no corresponding results have appeared in the literature for coupled ODE-PDE systems of the kind considered in \cite{fornasier2014mean}, to the best of our knowledge. This paper is devoted to the development of a Pontryagin Maximum Principle to characterize optima of such control problems. We first remark that we are not interested in all possible optima, but mainly on those which arise as limits of optimal strategies of the original discrete problems. We call this subclass of the set of optima \emph{mean-field optimal controls} (see Definition \ref{d-mfoc}). The interest in this class complies with the wish of using the continuous models as approximations of the finite-dimensional ones. %derivation of the continuous model as mean-field limit for $N \rightarrow +\infty$.
Furthermore in the model cases considered in \cite{fornasier2014mean,MFOC}, it is exactly the existence of mean-field optimal controls that is proved.

We summarize our result, borrowing a leaf from the diagram in \cite{carmona2013control}, as follows:

\begin{center}
  % setting the typeface to sans serif and the font size to small
  % the scope local to the environment
  %\sffamily
  \footnotesize
  \begin{tikzpicture}[auto,
    %decision/.style={diamond, draw=black, thick, fill=white,
    %text width=8em, text badly centered,
    %inner sep=1pt, font=\sffamily\small},
    block_center/.style ={rectangle, draw=black, thick, fill=white,
      text width=8em, text centered,
      minimum height=4em},
    block_left/.style ={rectangle, draw=black, thick, fill=white,
      text width=13em,  text centered, minimum height=4em, inner sep=6pt},
    block_noborder/.style ={rectangle, draw=none, thick, fill=none,
      text width=18em, text centered, minimum height=1em},
    block_assign/.style ={rectangle, draw=black, thick, fill=white,
      text width=18em, text ragged, minimum height=3em, inner sep=6pt},
    block_lost/.style ={rectangle, draw=black, thick, fill=white,
      text width=16em, text ragged, minimum height=3em, inner sep=6pt},
      line/.style ={draw, thick, -latex', shorten >=0pt},
      line2/.style ={draw, dashed, thick, -latex', shorten >=0pt},
      description/.style={fill=white,inner sep=2pt}]
    % outlining the flowchart using the PGF/TikZ matrix funtion
    \matrix [column sep=50mm,row sep=20mm] {
      % enrollment - row 1
      \node [block_left] (referred) {Discrete OC Problem \\ $m$ ODEs + $N$ ODEs};
      & \node [block_left] (excluded1) {Continuous OC Problem \\ $m$ ODEs + PDE}; \\
      % enrollment - row 2
      \node [block_left] (assessment) {PMP \\ $2m$ ODEs + $2N$ ODEs}; 
      & \node [block_left] (excluded2) {Extended PMP \\ $2m$ ODEs + PDE}; \\
    };% end matrix
    % connecting nodes with paths
    \begin{scope}[every path/.style=line]
      % paths for enrollemnt rows
      \path (referred) edge[above] node {$N \rightarrow +\infty$}  (excluded1);
      \path (referred) edge[above] node[description] {optimization} (assessment);
      \path (assessment) edge[above] node {$N \rightarrow +\infty$} (excluded2);
    \end{scope}
    \begin{scope}[every path/.style=line2]
      % paths for enrollemnt rows
      \path (excluded1) edge[above] node[description] {optimization} (excluded2);
    \end{scope}
  \end{tikzpicture}
\end{center}
We shall provide a set of hypotheses for which the dashed line from the upper-right to the bottom-right box is valid. Our strategy shall be the following: we apply the Pontryagin Maximum Principle (see e.g. \cite[Theorem 23.11]{clarke2013functional}) to the finite-dimensional optimal control problems (the solid line from the upper-left to the bottom-left box), and we pass to the mean-field limit the system of equations obtained with this procedure (the solid line from the bottom-left to the bottom-right box). The derived limit equation for the state and the (rescaled) adjoint variables are obtained in the form of Hamiltonian flows in the Wasserstein space of probability measures, in the sense of \cite{ambrosio}. The result will be a first-order condition valid for all \emph{mean-field optimal controls}. The existence of such controls is also proved (see Corollary \ref{c-gamma}).

More formally, we are interested in deriving optimality conditions for the solutions of the following optimal control problem subject to coupled ODE-PDE constraints.

\begin{framed}
\begin{problem}\label{problemPDE}
For $T>0$ fixed, find $u^* \in L^1([0,T];\U)$ minimizing the cost functional
\begin{align} \label{Ffun}
F(u) = \int^T_0 \left[ L(y(t),\mu(t)) + \gamma(u(t)) \right] dt,
\end{align}
where $(y,\mu)$ solve
\begin{align} \label{eq:mfdyn}
\begin{cases}
\dot{y}_k(t) = (K \star \mu(t))(y_k(t)) + f_k(y(t))+B_k u(t), \quad k = 1, \ldots, m, \\
\partial_t \mu(t) =- \nabla_x \cdot \left[(K \star \mu(t) + g(y(t)))\mu(t) \right],
\end{cases}
\end{align}
for the given initial datum $(y(0),\mu(0)) = (y^0,\mu^0) \in \R^{dm} \times \Pc(\R^d)$.
\end{problem}
\end{framed}

Here, $\gamma$ is a strictly convex cost functional, the finite dimensional set of controls $\U$ is convex and compact, $B_k$ are constant matrices, and $\Pc(\R^d)$ is the set of probability measures on $\R^d$ with compact support.

%The rest of the notation is collected in in Section \ref{s-hp}.

%For the sake of compactness, we shall henceforth omit, whenever needed, the time dependency from \eqref{eq:mfdyn} and the other systems of differential equations.

%The main goal of this article is to derive a first-order condition for the coupled ODE-PDE optimal control problems in the form of Problem \ref{problemPDE}, generalizing the Pontryagin Maximum Principle for controlled ODE systems. In particular, 
We shall prove the following main result.

\begin{theorem}\label{t-main} Fix an initial datum $(y^0,\mu^0) \in \R^{dm} \times \Pc(\R^d)$ and assume that Hypotheses (H)  in Section \ref{s-hp} hold. Then there exists a mean-field optimal control for Problem \ref{problemPDE}. Furthermore, if $u^*$ is a mean-field optimal control for Problem \ref{problemPDE} and $(y^*,\mu^*)$ is the corresponding trajectory, then $(u^*,y^*,\mu^*)$ satisfies the following {\bf extended Pontryagin Maximum Principle}:

\begin{framed} \noindent There exists $(q^*(\cdot),\nu^*(\cdot)) \in \Lip([0,T]; \R^{dm} \times \mathcal{P}_1(\R^{2d}))$ such that
\bi
\i there exists $R_T > 0$, depending only on $y^0, \supp(\mu^0), d, K, g, f_k, B_k, \mathcal{U},$ and $T$, such that $\supp(\nu^*(\cdot)) \subseteq B(0,R_T)$ and it satisfies $\nu^*(t)(E\times \R^d)=\mu^*(t)(E)$ for all $t\in[0,T]$ and for every Borel set $E \subseteq \R^d$;
\i it holds
\begin{align}\label{e-PMP}
\begin{cases}
\dot y^*_k&=\nabla_{q_k} \H_c(y^*,q^*,\nu^*,u^*),\\
\dot q^*_k&=-\nabla_{y_k}  \H_c(y^*,q^*,\nu^*,u^*),\\
\partial_t\nu^*\!\!\!\!\!\!&=-\nabla_{(x,r)}\cdot \Pt{(J\nabla_{\nu}\H_c(y^*,q^*,\nu^*,u^*))\nu^*}, \\
u^* &= \arg \max_{u \in \mathcal{U}} \H_c(y^*,q^*,\nu^*,u)
\end{cases}
\end{align}
where $J \in \R^{2d \times 2d}$ is the symplectic matrix $$J=\Pt{\ba{cc} 0 & \Id\\ -\Id&0\ea},$$ the Hamiltonian $\H_c: \R^{2dm} \times \Pc(\R^{2d}) \times \R^D \to \R$ is defined as
\bqn
\H_c(y,q,\nu,u) = \begin{cases}
\H(y,q,\nu,u) & \text{ if } \supp(\nu) \subseteq \overline{B(0,R_T)}, \\
+\infty & \text{ elsewhere;}\end{cases}
\eqnn
and $\H: \R^{2dm} \times \Pc(\R^{2d}) \times \R^D \to \R$ is defined as
\begin{align}
\begin{split}\label{e-H}
\H(y,q,\nu,u) = & \frac12\int_{\R^{4d}}(r-r') \cdot K(x-x')\,d\nu(x,r)\,d\nu(x',r') +\int_{\R^{2d}} r \cdot g(y)(x) d\nu(x,r)\\
&+\sum_{k=1}^m \int_{\R^{2d}} q_k \cdot K(y_k-x)\,d\nu(x,r)+\sum_{k=1}^m q_k \cdot (f_k(y)+B_k u)-L(y,\pi_{1\#}\nu)- \gamma(u).
\end{split}
\end{align}
\i the following conditions for system \eqref{e-PMP} hold at time $0$: $y^*(0)=y^0$ and $\nu^*(0)(E\times \R^d)=\mu^0(E)$ for every Borel set $E \subseteq \R^d$, 
\i the following conditions for system \eqref{e-PMP} hold at time $T$: $q^*(T)=0$ and $\nu^*(T)(\R^d\times E)=\delta_0(E)$ for every Borel set $E \subseteq \R^d$, where $\delta_0$ is the Dirac measure centered in $0$.
\ei
\end{framed}

%{\color{blue}Moreover, let $(u^*,y^*,\mu^*)$ be a solution to Problem \ref{problemPDE} that satisfies the following approximation property: let $\Pg{\mu_N(0)}_{N \in \N}$ be a sequence of empirical measures with uniformly bounded support satisfying $\lim_{N\to+\infty} \W_1(\mu_N(0),\mu(0))=0$ and let $(u^*_N,y^*_N,\mu^*_N)$ for each $N$ be a solution to Problem \ref{problemPDE}. If $u^*_N\weak u^*$, then $(u^*,y^*,\mu^*)$ satisfies the extended Pontryagin Maximum Principle.}
\end{theorem}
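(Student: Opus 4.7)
The plan is to follow the program sketched by the diagram in the introduction: apply the classical Pontryagin Maximum Principle to a sequence of finite-dimensional approximations of Problem \ref{problemPDE}, and pass to the mean-field limit in the resulting optimality system. I approximate $\mu^0\in\Pc(\R^d)$ by empirical measures $\mu^0_N=\frac{1}{N}\sum_{i=1}^N\delta_{x^0_i}$ with $\mu^0_N\to\mu^0$ in $\WW$ and uniformly bounded support, and, for each $N$, I consider the discrete optimal control problem in which the Vlasov PDE of \eqref{eq:mfdyn} is replaced by the coupled ODE system
\begin{equation*}
\dot{x}_i(t) = (K\star\mu^N(t))(x_i(t)) + g(y(t))(x_i(t)),\quad i=1,\dots,N,\qquad \mu^N(t):=\frac{1}{N}\sum_{i=1}^N \delta_{x_i(t)},
\end{equation*}
and $L(y,\mu)$ is evaluated at $\mu^N$. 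Existence of an optimal control $u^*_N$ follows from standard arguments exploiting the convexity of $\ga$, the compactness of $\U$, and the regularity assumptions in Hypotheses (H).

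Applying Clarke's form of the PMP (\cite[Theorem 23.11]{clarke2013functional}) to this finite-dimensional problem yields adjoint variables $q^N_k$ and $p^N_i$ satisfying the associated Hamiltonian system backward in time with $q^N_k(T)=0=p^N_i(T)$ and a pointwise maximum condition for $u^*_N$. The decisive step is the \emph{rescaling} $\tilde p^N_i := N p^N_i$: with this choice the empirical measures
\begin{equation*}
\nu^N(t) := \frac{1}{N}\sum_{i=1}^N \delta_{(x^N_i(t),\,\tilde p^N_i(t))}
\end{equation*}
are probability measures, their first marginal equals $\mu^N(t)$, their second marginal at $t=T$ is $\delta_0$, and the discrete Hamiltonian evaluated on $\nu^N$ coincides, up to terms vanishing as $N\to\infty$, with $\H$ of \eqref{e-H}. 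Together with Grönwall estimates on both the forward system $(y^N,x^N)$ and the rescaled backward system $(q^N,\tilde p^N)$, the Lipschitz and sublinear growth assumptions in (H) give bounds uniform in $N$ and $t\in[0,T]$, producing in particular the constant $R_T>0$ such that $\supp(\nu^N(t))\subseteq\overline{B(0,R_T)}$ for every $N$ and $t$. Notice that the factor $N$ in the rescaling is exactly what cancels the $1/N$ coming from differentiating empirical-measure interactions, making these backward bounds $N$-independent.

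Combined with Lipschitz-in-time estimates, these uniform bounds let me extract by Ascoli--Arzelà a subsequence along which $y^N\to y^*$ and $q^N\to q^*$ uniformly on $[0,T]$ and $\nu^N\to\nu^*$ uniformly in $\WW$, while $u^*_N\weak u^*$ in $L^1$. A $\Gamma$-convergence argument for the cost functional, relying on the smoothness of $L$ and the strict convexity of $\ga$, yields that $u^*$ is a mean-field optimal control — this is the content of Corollary \ref{c-gamma}. By the very definition of a mean-field optimal control, any such control arises as the limit of discrete optima, so it suffices to identify \eqref{e-PMP} on such limit points. This identification is carried out by testing the discrete equations for $x^N_i$ and $\tilde p^N_i$ against an arbitrary $\ph\in C^\infty_c([0,T)\times\R^{2d})$: summing over $i$, dividing by $N$, and exploiting the rescaling yields a weak continuity equation whose velocity field converges, term by term, to $J\nabla_\nu\H_c(y^*,q^*,\nu^*,u^*)$, thanks to the explicit form of the Wasserstein subdifferential for Hamiltonians of type \eqref{e-H} (see \cite{ambrosio}). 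The boundary conditions transfer by continuity from the discrete ones, and the maximum condition passes to the limit using strict convexity of $\ga$ together with the uniform convergence of $(y^N,q^N,\nu^N)$.

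\textbf{Main obstacle.} The delicate step is the passage to the limit in the measure equation, for two intertwined reasons. First, the velocity field $J\nabla_\nu\H_c$ depends \emph{nonlocally} on $\nu^*$ through $K\star\cdot$ and on $(y^*,q^*)$ through $g,f_k,L$, so I need continuity of the whole field with respect to $\WW$-convergence of measures, which is where both the uniform support bound $R_T$ and the cutoff definition of $\H_c$ (equal to $+\infty$ outside $\overline{B(0,R_T)}$) become essential. Second, identifying the limit flow as a genuine Hamiltonian flow in Wasserstein space requires that evaluating $\nabla_\nu\H$ on the empirical measures $\nu^N$ reproduces, up to $O(1/N)$, the discrete adjoint dynamics of $\tilde p^N_i$ — a matching possible only because of the scaling $\tilde p^N_i = Np^N_i$. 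Finally, uniqueness of solutions to the extended Hamiltonian system \eqref{e-PMP} with the prescribed forward-backward boundary conditions, for fixed admissible $u^*$, is needed to rule out subsequence dependence and conclude that the full sequence converges.
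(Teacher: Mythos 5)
Your proposal follows essentially the same route as the paper: discretize via empirical measures, apply the finite-dimensional PMP, rescale the adjoints by $N$, obtain uniform support and Lipschitz bounds by Gr\"onwall, extract limits by Ascoli--Arzel\`a, and pass to the limit term by term in the Hamiltonian system, with existence of a mean-field optimal control coming from the $\Gamma$-convergence argument of Corollary \ref{c-gamma}. The only superfluous point is your final remark on uniqueness of solutions to \eqref{e-PMP}: the theorem only asserts the \emph{existence} of an adjoint pair $(q^*,\nu^*)$ satisfying the system, so convergence along a subsequence suffices and no uniqueness for the forward-backward system is needed.
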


As already mentioned, the formulation given above shows that the dynamics of $(y^*,q^*,\nu^*)$ is essentially an Hamiltonian flow in the Wasserstein space of probability measures with respect to state and adjoint variables with Hamiltonian $\H$, in the sense of \cite{ambrosio}. The definition of $\H_c$ is introduced to simplify some technical details and does not alter the result. This fact is remarkably consistent with the dynamics \eqref{eq:mfdyn}, since both are flows in a Wasserstein space. We believe that this formulation of the optimality conditions making use of the formalism of subdifferential calculus in Wasserstein spaces of probability measures constitutes one of the novelties of the work.

\begin{remark} \label{r-uniquemax}
For every $(y,q,\nu)$ with $\supp(\nu) \subseteq \overline{B(0,R_T)}$, \eqref{e-H} immediately implies that
\begin{align*}
\overline{u} \in \arg \max_{u \in \mathcal{U}} \H_c(y, q, \nu, u) \iff \overline{u} \in \arg \max_{u \in \mathcal{U}}\Pt{ \sum_{k=1}^m q_k \cdot B_k u - \gamma(u)}.
\end{align*}
Then, the strict convexity of $\gamma$ and the convexity and the compactness of $\mathcal{U}$ imply that $\overline{u}$ is uniquely determined by $(y,q,\nu)$. This is the reason why we write the equality symbol in $u^* = \arg \max_{u \in \mathcal{U}} \H_c(y^*,q^*,\nu^*,u)$ in place of an inclusion.
\end{remark}

We point out the difference between the usual gradient in $\R^{2d}$ with respect to the state variables $x$ and the adjoint variables $r$, denoted by $\nabla_{(x,r)}$, and the Wasserstein gradient $\nabla_\nu$ of $\H_c$, which, as shown in Section \ref{s-wass}, whenever $\nu$ has supported contained in $B(0,R_T)$ can be computed explicitly as follows:
\begin{itemize}
\item For $l=1,\ldots,d$, it holds
\begin{align}
\begin{aligned}\label{e-gradx}
\nabla_\nu\H_c(y,q,\nu,u)(x,r)\cdot e_l&=\int_{\R^{2d}} (r-r')\cdot(\Jac K(x-x') e_l) \, d\nu(x',r')+ r \cdot (\Jac_x g(y)(x) e_l)\\
&-\sum_{k=1}^m q_k \cdot (\Jac K(y_k-x) e_l)-\nabla_{\xi}\ell(y,x,\mbox{$\int \omega \mu$})\cdot e_l \\
& -\left(\nabla_{\varsigma}\ell(y,x,\mbox{$\int \omega \mu$}) \Jac\omega(x)\right)\cdot e_l.
\end{aligned}
\end{align}
These are the components of $\nabla_\nu\H_c(y,q,\nu,u)(x,r)$ in the $x_l$ coordinates.
\item For $l=d+1,\ldots,2d$ it holds
\bqn
\nabla_\nu\H_c(y,q,\nu,u)(x,r)\cdot e_l=\int_{\R^{2d}} K(x-x')\cdot e_{l-d} \, d\nu(x',r')+g(y)(x)\cdot e_{l-d}.
\eqnl{e-gradr}
These are the components of $\nabla_\nu\H_c(y,q,\nu,u)(x,r)$ in the $r_{l-d}$ coordinates.
\end{itemize}
In \r{e-gradx} and \r{e-gradr}, the functions $\ell\in \cal{C}^2(\R^{dm} \times \R^d \times \R^d;\R)$ and $\omega \in \cal{C}^2(\R^d;\R^d)$ are related to the functional $L$ in \eqref{Ffun} via
$$L(y,\mu) = \int_{\R^{d}} \ell \left(y,x, \mathsmaller \int \om \mu \right) d\mu(x),$$
where $\int \! \om \mu := \om \mu(\R^d)$, 
while $\nabla_{\xi}\ell$ and $\nabla_{\varsigma}\ell$ denote the partial derivatives of the function $\ell(\eta, \xi, \varsigma)$, and $\Jac\omega(x)$ is the Jacobian of the function $\omega$ evaluated at $x$. Notice that $\nabla_{\nu} \H(y,q,\nu,u)$ actually does not depend on $u$, as a consequence of the fact that the control does not act directly on the PDE component of \eqref{eq:mfdyn}.

The main tool we use to prove Theorem \ref{t-main} is the Pontryagin Maximum Principle (henceforth, simply addressed as PMP) for optimal control problems with ODE constraint. We shall apply it to the following finite-dimensional problems, whose constraints converge to the coupled ODE-PDE system of Problem \ref{problemPDE}, as we will show in Section \ref{s-coupled}. For this reason, we call Theorem \ref{t-main} the \emph{extended PMP}.

\begin{framed}
\begin{problem}\label{problemODE}
For $T>0$ fixed, find $u^* \in L^1([0,T];\U)$ minimizing the cost functional
\begin{align} \label{FNfun}
F_N(u) = \int^T_0 \left[ L(y(t),\mu_N(t)) + \gamma(u(t)) \right] dt,
\end{align}
where $(y,\mu_N)$ solve
\begin{align} \label{eq:discdyn}
\begin{cases}
\dot{y}_k = \frac{1}{N}\sum^N_{j = 1}K(y_k - x_j) + f_k(y)+B_k u, \quad k = 1, \ldots, m \\
\dot{x}_i = \frac{1}{N}\sum^N_{j = 1}K(x_i - x_j) + g(y)(x_i), \quad \quad \quad \; \; i = 1, \ldots, N,
\end{cases}
\end{align}
for the given initial datum $(y(0),x(0)) = (y^0,x^0) \in \R^{dm}\times\R^{dN}$, where
\begin{align*}
\mu_N(t)(x) = \frac{1}{N}\sum^N_{i = 1} \delta(x -x_i(t)),
\end{align*}
is the empirical measure centered on the trajectory $x(\cdot) = (x_1(\cdot), \ldots, x_N(\cdot))$.
\end{problem}
\end{framed}

The extended PMP will be derived after reformulating the finite-dimensional PMP applied to Problem \ref{problemODE} in terms of the empirical measure in the product space of state variables $x_i$ and adjoint variables $p_i$, defined as
$$\nu_N(x,r) = \frac{1}{N}\sum^N_{i = 1} \delta(x - x_i, r - Np_i).$$
Notice that rescaling the adjoint variables $p_i$ by the number $N$ of agents is needed in order to observe a nontrivial dynamics in the limit (see also Remark \ref{rem3}); indeed, within this scaling, the right-hand side of the finite-dimensional PMP is brought back to the form considered, for instance, in \cite{CGP}, with a different Hamiltonian.

The following diagram recollects the strategy of the proof, making use of the notation already introduced and reporting in which part of the paper each result is proved:

\begin{center}
  % setting the typeface to sans serif and the font size to small
  % the scope local to the environment
  %\sffamily
  \footnotesize
  \begin{tikzpicture}[auto,
    %decision/.style={diamond, draw=black, thick, fill=white,
    %text width=8em, text badly centered,
    %inner sep=1pt, font=\sffamily\small},
    block_center/.style ={rectangle, draw=black, thick, fill=white,
      text width=8em, text centered,
      minimum height=4em},
    block_left/.style ={rectangle, draw=black, thick, fill=white,
      text width=13em,  text centered, minimum height=4em, inner sep=6pt},
    block_noborder/.style ={rectangle, draw=none, thick, fill=none,
      text width=18em, text centered, minimum height=1em},
    block_assign/.style ={rectangle, draw=black, thick, fill=white,
      text width=18em, text ragged, minimum height=3em, inner sep=6pt},
    block_lost/.style ={rectangle, draw=black, thick, fill=white,
      text width=16em, text ragged, minimum height=3em, inner sep=6pt},
      line/.style ={draw, thick, -latex', shorten >=0pt},
      line2/.style ={draw, dashed, thick, -latex', shorten >=0pt},
      description/.style={fill=white,inner sep=2pt}]
    % outlining the flowchart using the PGF/TikZ matrix funtion
    \matrix [column sep=50mm,row sep=20mm] {
      % enrollment - row 1
      \node [block_left] (referred) {Find $u^*_N$ subject to \eqref{eq:discdyn} \\ $(y,\mu_N)$ variables};
      & \node [block_left] (excluded1) {Find $u^*$ subject to \eqref{eq:mfdyn} \\ $(y,\mu)$ variables}; \\
      % enrollment - row 2
      \node [block_left] (assessment) {$u^*_N$ satisfies finite-dimensional PMP \\ with $\H_N(y,q,\nu_N,u_N)$}; 
      & \node [block_left] (excluded2) {$u^*$ satisfies extended PMP \\ with $\H(y,q,\nu,u)$}; \\
    };% end matrix
    % connecting nodes with paths
    \begin{scope}[every path/.style=line]
      % paths for enrollemnt rows
      \path (referred) edge[above] node {Section \ref{s-coupled}}  (excluded1);
      \path (referred) edge[above] node[description] {Theorem \ref{PMPODE}} (assessment);
      \path (assessment) edge[above] node {Section \ref{s-proof}} (excluded2);
    \end{scope}
    \begin{scope}[every path/.style=line2]
      % paths for enrollemnt rows
      \path (excluded1) edge[above] node[description] {Theorem \ref{t-main}}  (excluded2);
    \end{scope}
  \end{tikzpicture}
\end{center}

The structure of the paper is the following. In Section \ref{s-hp} we recall notations and the main Hypotheses (H). In Section \ref{s-coupled}, we study the controlled dynamics subject to a coupled ODE-PDE constraint of the form \r{eq:mfdyn}, establishing existence and uniqueness results for solutions. In Section \ref{s-ODE} we study the finite-dimensional Problem \ref{problemODE}, and apply the PMP to it. In Section \ref{s-wass}, we recall basic facts about subdifferential calculus in Wasserstein spaces, and we explicitly compute $\nabla_\nu\H_c$. In Section \ref{s-proof}, we prove the extended PMP, i.e., Theorem \ref{t-main}. Finally, Section \ref{s-CS} is devoted to the study of an interesting example of Problem \ref{problemPDE}, the Cucker-Smale system.

\subsection{Notation and Hypotheses (H)} \label{s-hp}

We start this section by recalling the notation used throughout the paper.

The constants $d,D$ are two positive integers (the dimension of the space of the agents and of the control, respectively), $T > 0$ (the end time of the optimization procedure), and $\U$ is a {\em convex compact} subset of $\R^D$ (set in which controls take values).

Functionals have the following expressions: $K: \R^d \to \R^d$, each $f_k$ satisfies $f_k: \R^{dm} \to \R^d$, and for every $y \in \R^{dm}$ and $\mu \in \mathcal{P}_1(\R^d)$, $g(y): \R^d \to \R^d$ and $L(y,\mu): \R^d \to \R$. The matrices $B_k$ are constant $d\times D$ matrices.

The space $\mathcal{P}(\R^n)$ is the set of probability measures which take values on $\R^n$, while the space\footnote{We follow the notation of \cite{AGS}.} $\mathcal{P}_p(\R^n)$ is the subset of $\mathcal{P}(\R^n)$ whose elements have finite $p$-th moment, i.e.,
$$\int_{\R^n} \|x\|^p d\mu(x) < +\infty.$$
We denote by $\mathcal{P}_c(\R^n)$ the subset of $\mathcal{P}_1(\R^n)$ which consists of all probability measures with compact support. Notice that, if $(\mu_n)_{n \in \N}$ is a sequence in $\mathcal{P}_c(\R^n)$ and it exists $R>0$ such that $\supp(\mu_n) \subseteq B(0, R)$ for all $n\in\N$, then $(\mu_n)_{n \in \N}$ is compact in $\mathcal{P}_p(\R^n)$ for all $p \geq 1$.

For any $\mu \in \mathcal{P}(\R^n)$ and any Borel function $r: \R^{n_1} \to \R^{n_2}$, we denote by $r_{\#}\mu \in \mathcal{P}(\R^{n_2})$ the {\it push-forward of $\mu$ through $r$}, defined by
\bqn
r_{\#}\mu(B) := \mu(r^{-1}(B)) \quad \text{ for every Borel set } B \text{ of } \R^{n_2}.
\eqnn
In particular, if one considers the projection operators $\pi_1$ and $\pi_2$ defined on the product space $\R^{n_1} \times \R^{n_2}$, for every $\rho \in \mathcal{P}(\R^{n_1} \times \R^{n_2})$ we call {\it first} (resp., {\it second}) {\it marginal} of $\rho$ the probability measure $\pi_{1\#}\rho$ (resp., $\pi_{2\#}\rho$). Given $\mu \in \mathcal{P}(\R^{n_1})$ and $\nu \in \mathcal{P}(\R^{n_2})$, we denote with $\Gamma(\mu, \nu)$ the subset of all probability measures in $\mathcal{P}(\R^{n_1} \times \R^{n_2})$ with first marginal $\mu$ and second marginal $\nu$.

On the set $\mathcal{P}_p(\R^n)$ we shall consider the following distance, called the {\it Wasserstein or Monge-Kantorovich-Rubinstein distance},
\bqn  \label{e_Wp}
\W^p_p(\mu,\nu)=\inf \left \{ \int_{\R^{2n}} \|x-y\|^p d \rho(x,y) : \rho \in \Gamma(\mu,\nu) \right \}.
\eqn
If $p = 1$ we have the following equivalent expression for the Wasserstein distance:
\bqn
\W_1(\mu,\nu)=\sup \left \{ \int_{\R^n} \varphi(x) d (\mu-\nu)(x)  : \varphi \in \operatorname{Lip}(\R^n), \; \operatorname{Lip}(\varphi) \leq 1 \right \}.
\eqnn
We denote by $\Gamma_o(\mu,\nu)$ the set of optimal plans for which the minimum is attained, i.e.,
\bqn
\rho \in \Gamma_o(\mu, \nu) \iff \rho \in \Gamma(\mu, \nu) \text{ and } \int_{\R^{2n}} \| x - y \|^p d \rho(x,y) = \W^p_p(\mu,\nu).
\eqnn
%We say that a functional $\psi: \mathcal{P}_p(\R^n) \to (-\infty,+\infty]$ is {\it proper} if $D(\psi) = \{\mu \in \mathcal{P}_2(\R^n) \mid \psi(\mu) < +\infty \} \neq \emptyset$, and that $\psi$ is {\it semiconvex} if for every curve $\gamma: t \in [0,1] \to \gamma_t \in \mathcal{P}_p(\R^n)$ it holds
%\bqn
%\psi(\gamma_t) \leq (1-t) \psi(\gamma_0) + t \psi(\gamma_1) +t(1-t) o(\W_p(\gamma_0, \gamma_1)) \text{ for every } t \in [0,1].
%\eqnn
It is well-known that $\Gamma_o(\mu, \nu)$ is non-empty for every $(\mu,\nu) \in \mathcal{P}_p(\R^n)\times\mathcal{P}_p(\R^n)$, hence the infimum in \eqref{e_Wp} is actually a minimum. For more details, see e.g. \cite{villani,AGS}.

For any $\mu \in \PP(\R^d)$ and $K: \R^d \to \R^d$, the notation $K \star \mu$ stands for the convolution of $K$ and $\mu$, i.e.,
\bqn
(K \star \mu)(x) = \int_{\R^d} K(x - x') d\mu(x');
\eqnn
this quantity is well-defined whenever $K$ is continuous and \emph{sublinear}, i.e., there exists $C$ such that $\| K(\xi) \| \leq C (1 + \|\xi\|)$ for all $\xi \in \R^d$. Furthermore we shall deal also with the convolution $(\nabla_{(x',r')}\Pa{r',K(x')})\star \nu$ in $\R^{2d}$, whose explicit expression is
\bqn
\Pt{(\nabla_{(x',r')}\Pa{r',K(x')})\star \nu}(x,r) = \int_{\R^{2d}} \Pt{\nabla_{(x',r')}\Pa{r-r',K(x - x')}} d\nu(x',r').
\eqnn
Notice that, under the hypotheses we are going to make, this convolution is not always well-defined for $\nu \in \PP(\R^{2d})$. It is nonetheless well-defined for measures $\nu\in \mathcal{P}_c(\R^{2d})$, that is to say for all the cases that will appear in the sequel.

We shall denote with $\cal{M}_b(\R^{n_1};\R^{n_2})$ the space of bounded Radon vector measures from $\R^{n_1}$ to $\R^{n_2}$, and with $\|\cdot\|_{\cal{M}_b(\R^{n_1};\R^{n_2})}$ the total variation norm on it. If $\om \in \cal{C}(\R^d;\R^d)$ is sublinear and $\mu \in \mathcal{P}_1(\R^d)$, the Radon measure $\om \mu \in \cal{M}_b(\R^d;\R^d)$ is defined as
\begin{align*}
\om \mu(E) := \int_{E} \om(x) d\mu(x), \quad \text{ for every } E \subset \R^d \text{ bounded.}
\end{align*}
We shall denote with $\int \! \om \mu := \om \mu(\R^d)$.

In what follows, we shall consider the space ${\mathcal X}:=\R^{dm}\times\PP(\R^d)$, together with the following distance
\bqn
\|(y,\mu)-(y',\mu')\|_{\mathcal X}:=\|y-y'\|+\W_1(\mu,\mu'),
\eqnl{e-dX}
where $\|y-y'\|:=\sum_{k=1}^m \|y_k-y'_k\|_{\ell_2(\R^d)}$.

Finally, for every $N \in \N$, the mapping $\Pi_N:\R^{2dN}\to\mathcal{P}_1(\R^{2d})$ is defined as follows 
\bqn
\Pi_N: (x_1,p_1,\ldots,x_N,p_N)\mapsto \frac1N \sum_{i=1}^N\delta(\cdot -x_i, \cdot -Np_i).
\eqnl{e-PiNint}
%We denote with $\pi_1,\pi_2:\mathcal{P}_1(\R^{2d})\to\mathcal{P}_1(\R^d)$ the projection on the first and the second component in $\R^d$, i.e.,
%$$(\pi_1\nu)(E)=\nu(E\times \R^d),\qquad\qquad (\pi_2\nu)(E)=\nu(\R^d\times E),$$
%for all $E$ Borel sets in $\R^{d}$.

Henceforth, we assume that the following regularity properties hold.
\begin{framed}
\begin{center}
{\bf Hypotheses (H)}
\end{center}
\begin{description}
\item[(K)] The function $K \in \cal{C}^2(\R^d;\R^d)$ is odd and sublinear, i.e., there exists $C_K > 0$ such that for all $x \in \R^d$ it holds
$$\| K(x) \| < C_K(1 + \|x\|).$$
\item[(L)] The function $L: \R^{dm} \times \PP(\R^d) \rightarrow \R$ is
$$L(y,\mu) = \int_{\R^{d}} \ell \left(y,x, \mathsmaller \int \om \mu \right) d\mu(x),$$ with $\ell\in \cal{C}^2(\R^{dm} \times \R^d \times \R^d;\R)$ and $\omega \in \cal{C}^2(\R^d;\R^d)$.
\item[(G)] The function $g \in \cal{C}^2(\R^{dm};\cal{C}^2(\R^d;\R^d))$ satisfies for all $x \in \R^d$ and all $y \in \R^{dm}$
$$g(y)(x) \cdot x \leq G_1 \|x \|^2 + G_2 \max_{l = 1, \ldots, m} \|y_{l}\|^2 + G_3,$$
where the constants $G_1, G_2$ and $G_3$ are independent on $x$ and $y$.
\item[(F)] For each $k = 1, \ldots, m$, the function $f_k \in \cal{C}^2(\R^{dm};\R^d)$ satisfies for all $y \in \R^{dm}$
$$f_k(y) \cdot y_k \leq F_1 \max_{l = 1, \ldots, m} \|y_{l}\|^2+ F_2,$$
where the constants $F_1$ and $F_2$ are independent on $y$ and $k$.
\item[(U)] The set $\mathcal{U}\subseteq\R^D$ is compact and convex.
\item[($\gamma$)] The function $\gamma: \mathcal{U} \rightarrow \R$ is strictly convex.
\end{description}
\end{framed}

\begin{remark}
We briefly compare Hypotheses (H) with those of \cite{andersson2011maximum,burger2013mean}. In \cite{andersson2011maximum}, which deals with an SDE-constrained optimal control problem, $\mathcal{C}^{1,1}$ functionals with respect to state variables and the control are considered. Therefore our hypotheses are just slightly more restrictive. On the other hand, we do not require differentiability of the running cost. The authors of \cite{burger2013mean} deal, instead, with a mean-field game type optimality conditions to model evacuation scenarios. They derive a first-order condition under the hypotheses of continuous differentiability of the functionals with respect to the state variables together with convexity and positivity assumptions. Furthermore, they deal specifically with an $L^2$ control cost, while we allow ours to be strictly convex.
\end{remark}

We now give the rigorous definition of \emph{mean-field optimal control}.

\begin{definition}\label{d-mfoc}
Let $(y^0,\mu^0) \in \R^{dm}\times\Pc(\R^{d})$ be given. An optimal control $u^*$ for Problem \ref{problemPDE} with initial datum $(y^0,\mu^0)$ is a  \emph{mean-field optimal control} if there exists a sequence $(u_N)_{N\in \N} \subset L^1([0,T];\mathcal{U})$ and a sequence $(\mu^0_N)_{N \in \N} \in \Pc(\R^d)$ such that
\begin{enumerate}[label=$(\roman*)$]
\item\label{uno} for every $N \in \N$, $\mu_N^0(\cdot):=\frac1N\sum_{i=1}^N (\cdot-x_{i,N}^0)$ is a sequence of empirical measures for some $x_{i,N}^0\in\supp(\mu^0)+\overline{B(0,1)}$ such that $\mu_N^0 \rightharpoonup \mu^0$ weakly$^*$ in the sense of measures;
\item for every $N \in \N$, $u^*_N$ is a solution of Problem \ref{problemODE} with initial datum $(y^0,\mu^0_N)$;
\item there exists a subsequence of $(u_N)_{N\in \N}$ converging weakly in $L^1([0,T];\U)$ to $u^*$.
\end{enumerate}
\end{definition}

\begin{remark}
As mentioned before, the above definition is motivated by our interest in optimizers that are close to optimal controls for the original finite-dimensional problems.
Notice also, that since the measures $\mu^0_N$ have all compact support contained in $\supp(\mu^0)+\overline{B(0,1)}$, the build a compact sequence in $\mathcal{P}_p(\R^n)$ for all $p \geq 1$, and therefore, due to weak$^*$ convergence to $\mu^0$,  we also have that $\lim_{N\to\infty}\W_p(\mu_N^0,\mu^0)=0$.
\end{remark}

\section{The coupled ODE-PDE dynamics}\label{s-coupled}

In this section, we first recall results for PDE equations of transport type with nonlocal interaction velocities, like the one appearing in the second equation of  \r{eq:mfdyn}. We then study the coupled ODE-PDE dynamics \r{eq:mfdyn} and we state existence and uniqueness results of solutions, together with continuous dependence on the initial data $(y^0,\mu^0)$ and on the control $u$. The proofs follow closely in the footsteps of similar results in \cite{ambrosio,fornasier2014mean,pedestrian,gw}. We also show that finite-dimensional ODE dynamics \r{eq:discdyn} are embedded in \r{eq:mfdyn}, in the sense that the solution of \r{eq:mfdyn} with an initial data that is an empirical measure coincides with the empirical measure with support on the solution of \r{eq:discdyn}.

\subsection{Transport PDE equations with nonlocal interaction}
In this section, we study  equations for the dynamics of measures, recalling results of existence and uniqueness. We first define the meaning of solution for the equation
\bqn
\partial_t\mu(t)=-\nabla_x\cdot (v(t,x,\mu(t)) \mu(t)),
\eqnl{e-pde2}
where $v:[0,T]\times\R^n\times\PP(\R^n)\to\R^n$ is a given vector field and $n \in \N$ is the dimension of the underlying Euclidean space.

\bdeff \label{d-pde12} We say that a map $\mu:[0,T]\to\PP(\R^n)$ is a solution of \r{e-pde2} if the following holds:
\begin{enumerate}[label=$(\roman*)$]
\item $\mu$ has uniformly compact support, i.e., there exists $R > 0$ such that $\supp(\mu(\cdot))\in B(0,R)$;
\item $\mu$ is continuous with respect to the Wasserstein distance $\W_1$;
\item $\mu$ satisfies \r{e-pde2} in the weak sense, i.e. (see \cite[Equation (8.1.4)]{AGS}), 
\bqn
\frac{d}{dt}\int_{\R^{n}}\phi(x)\,d\mu(t)(x)= \int_{\R^{n}}\nabla \phi(x)\cdot v (t,x,\mu(t))\,d\mu(t)(x),
\eqnn
for every $\phi \in \mathcal{C}^{\infty}_c(\R^n;\R)$.
\end{enumerate}
\edeff

Now, we can formally define the concept of solution of the controlled ODE-PDE system \eqref{eq:mfdyn}, which applies, \emph{mutatis mutandis}, to system \eqref{e-PMP} as well.
\bdeff Let $u\in L^1([0,T];\U)$ and $(y^0,\mu^0)\in \mathcal X$, with $\mu^0$ of bounded support, be given. We say that a map $(y,\mu):[0,T]\to \mathcal X$ is a solution of the  system \r{eq:mfdyn} 
with control $u$ if
\begin{enumerate}[label=$(\roman*)$]
\item $(y(0),\mu(0)) = (y^0,\mu^0)$;
\item the solution is continuous {in time} with respect to the metric \r{e-dX} in $\mathcal X$;
\item the $y$ coordinates define a Carath{\'e}odory solution of the following controlled ODE problem 
\begin{equation*}
\dot{y}_k(t) = (K \star \mu(t))(y_k(t)) + f_k(y(t))+B_k u(t), \quad k = 1, \ldots, m,
\end{equation*}
for all $t \in [0,T]$;
\item $\mu$ is a solution of \eqref{e-pde2}, where $v:[0,T]\times\R^d\times\PP(\R^d)\to\R^d$ is the time-varying vector field defined as follows
\begin{equation*}
v(t,x,\mu(t))(x):=(K \star \mu(t) + g(y(t)))(x).
\end{equation*}
\end{enumerate}
\edeff

We now derive the existence of solutions of \eqref{eq:mfdyn} as limits for $N \to \infty$ of the system of ODE \r{eq:discdyn}. We first prove that solutions of \r{eq:discdyn} coincide with specific solutions of \r{eq:mfdyn}. We then prove the limit result with the help of Lemmata \ref{p-estkernel} and \ref{p-lipkernel}.

\bp
Let $N$ be fixed, and the control $u\in L^1([0,T];\U)$ be given. Let $(y,x_N):[0,T]\to \mathcal X$ be the corresponding solution of \r{eq:discdyn}, with $x_N(t)=(x_{1,N}(t),\ldots,x_{N,N}(t))$. Then, the couple $(y,\mu_N):[0,T]\to \R^{dm+dN}$, with $\mu_N(t)$ being the empirical measure
$$\mu_N(t)(x):=\frac{1}N \sum_{i=1}^N (x-x_{i,N}(t)),$$
is a solution of \r{eq:mfdyn} with control $u$.
\ep
\bproof It can be easily proved by rewriting \r{eq:mfdyn} with $\mu_N$ and arguing exactly as in \cite[Lemma 4.3]{MFOC}.
\eproof

\begin{lemma}\label{p-estkernel}
Let $K:\R^d\to\R^d$ satisfy (K) and $\mu \in \PP(\R^d)$. Then for all $y \in \R^d$ it holds
$$\|(K \star \mu)(y)\| \leq C_K\left( 1 + \| y \| + \int_{\R^d} \| x \| d\mu(x) \right).$$
\end{lemma}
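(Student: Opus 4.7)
The plan is to estimate the integrand defining the convolution pointwise and then integrate. Recall that $(K\star\mu)(y) = \int_{\R^d} K(y-x')\,d\mu(x')$, so by the triangle inequality (applied to the Bochner integral of a vector-valued function),
\[
\|(K\star\mu)(y)\| \;\leq\; \int_{\R^d} \|K(y-x')\|\,d\mu(x').
\]

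Next, I would invoke hypothesis (K), which asserts that $\|K(\xi)\| \leq C_K(1+\|\xi\|)$ for every $\xi\in\R^d$. Taking $\xi=y-x'$ and combining with $\|y-x'\|\leq\|y\|+\|x'\|$ gives the pointwise bound
\[
\|K(y-x')\| \;\leq\; C_K\bigl(1+\|y\|+\|x'\|\bigr).
\]

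Finally, I would integrate this inequality against $\mu$, using that $\mu$ is a probability measure (so $\int d\mu = 1$) and that $\|y\|$ is constant with respect to the integration variable $x'$, to conclude
\[
\|(K\star\mu)(y)\| \;\leq\; C_K\!\left(1+\|y\|+\int_{\R^d}\|x'\|\,d\mu(x')\right).
\]
The proof is entirely routine: the only ingredients are the sublinearity of $K$ and the triangle inequality, and there is no substantial obstacle.
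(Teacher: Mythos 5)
Your proof is correct and complete; the paper itself does not spell out an argument but simply cites \cite[Lemma 6.4]{MFOC}, and the routine chain you give (triangle inequality for the integral, sublinearity from (K), and integrating against the probability measure $\mu$, whose first moment is finite since $\mu\in\mathcal{P}_1(\R^d)$) is exactly the standard argument behind that reference. No issues.
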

\bproof
See, for instance, \cite[Lemma 6.4]{MFOC}.
\eproof

\begin{lemma}\label{p-lipkernel}
Let $K:\R^d\to\R^d$ satisfy (K) and let $\mu^1:[0,T] \rightarrow \Pc(\R^d)$ and $\mu^2: [0,T] \to \PP(\R^d)$ be two continuous maps with respect to $\W_1$ satisfying
$$\supp(\mu^1(t)) \cup \supp(\mu^2(t)) \subseteq B(0,R),$$
for every $t \in [0,T]$, for some $R > 0$. Then for every $\rho > 0$ there exists constant $L_{\rho,R}$ such that
$$\|K \star \mu^1(t) - K \star \mu^2(t)\|_{L^{\infty}(B(0,\rho))} \leq L_{\rho,R} \W_1(\mu^1(t),\mu^2(t))$$
for every $t \in [0,T]$.
\end{lemma}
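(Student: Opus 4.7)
The proof will be a direct consequence of Kantorovich--Rubinstein duality (equivalently, a coupling argument) combined with the local Lipschitz regularity of $K$ coming from Hypothesis (K).

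The plan is as follows. Fix $t\in[0,T]$, $y\in B(0,\rho)$, and set $S:=\overline{B(0,\rho+R)}$. Since $K\in\mathcal{C}^2(\R^d;\R^d)$, its Jacobian $\Jac K$ is continuous and hence bounded on the compact set $S$: define
\[
L_{\rho,R}:=\max_{z\in S}\|\Jac K(z)\|.
\]
This constant depends only on $\rho$ and $R$ (it is in particular independent of $t$ and of the chosen $y\in B(0,\rho)$). By the mean value theorem, for every $x,x'\in\overline{B(0,R)}$ one has $y-x,\,y-x'\in S$, hence
\[
\|K(y-x)-K(y-x')\|\leq L_{\rho,R}\,\|x-x'\|.
\]

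Next, since $\supp(\mu^1(t))\cup\supp(\mu^2(t))\subseteq\overline{B(0,R)}$, any optimal plan $\pi\in\Gamma_o(\mu^1(t),\mu^2(t))$ is supported in $\overline{B(0,R)}\times\overline{B(0,R)}$. Writing
\[
(K\star\mu^1(t))(y)-(K\star\mu^2(t))(y)=\int_{\R^{2d}}\bigl(K(y-x)-K(y-x')\bigr)\,d\pi(x,x'),
\]
taking norms inside the integral and invoking the Lipschitz estimate yields
\[
\|(K\star\mu^1(t))(y)-(K\star\mu^2(t))(y)\|\leq L_{\rho,R}\int_{\R^{2d}}\|x-x'\|\,d\pi(x,x')=L_{\rho,R}\,\W_1(\mu^1(t),\mu^2(t)),
\]
where the last equality uses the optimality of $\pi$. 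Taking the supremum over $y\in B(0,\rho)$ gives the claim, with the same constant $L_{\rho,R}$ for every $t\in[0,T]$.

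There is no real obstacle: the only small point to keep in mind is that the relevant Lipschitz constant of $K$ must be taken on the enlarged ball $\overline{B(0,\rho+R)}$ rather than $\overline{B(0,R)}$, because the arguments of $K$ are differences $y-x$ with $y\in B(0,\rho)$ and $x\in\overline{B(0,R)}$. The $\mathcal{C}^2$ regularity in Hypothesis (K) is used only to guarantee local Lipschitz continuity of $K$ on compact sets, and compactness of $S$ then provides the uniform constant $L_{\rho,R}$ that is independent of $t$.
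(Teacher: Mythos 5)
Your proof is correct and complete: the coupling argument with an optimal plan $\pi\in\Gamma_o(\mu^1(t),\mu^2(t))$, together with the Lipschitz bound for $K$ on the convex compact set $\overline{B(0,\rho+R)}$ (correctly enlarged to account for the difference $y-x$), gives exactly the stated estimate with a constant independent of $t$. The paper itself does not write out a proof but defers to \cite[Lemma 6.7]{MFOC}, and your argument is precisely the standard one used there, so there is nothing to add.
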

\bproof
A proof of this result may be found, for instance, in \cite[Lemma 6.7]{MFOC}.
\eproof

\bp \label{p-esistenza}
Let $y^0\in \R^{dm}$, $\mu^0\in \Pc(\R^{d})$, and $\mu_N^0(\cdot)$ be as in Definition \ref{d-mfoc}--\ref{uno}. Let $(u_N)_{N \in \N} \subseteq L^1([0,T];\U)$ be a sequence of controls such that $u_N\weak u$, for some $u\in L^1([0,T];\U)$. 

Then, the sequence of solutions $(y_N,\mu_N) \in \Lip([0,T];{\mathcal X})$ of \r{eq:discdyn} with initial data $(y^0,\mu_N^0)$ and control $u_N$ converges to a solution $(y,\mu) \in \Lip([0,T];{\mathcal X})$ of \r{eq:mfdyn} with initial data $(y^0,\mu^0)$ and control $u$. Moreover, there exists $\rho_T > 0$, depending only on $y^0, \supp(\mu^0), K, g, f_k, B_k, \mathcal{U},$ and $T$, such that for every $N \in \N$, for every $k = 1, \ldots, m$ and for every $t \in [0,T]$ it holds 
$$\|y_{k,N}(t)\|,\|y_k(t)\| \leq \rho_T \quad \mbox{ and } \quad  \supp(\mu_N(t)),\supp(\mu(t)) \subseteq B(0,\rho_T).$$
\ep
\bproof
We start by fixing $N > 0$ and estimating the growth of $\|y_{k,N}(t)\|^2 + \|x_{i,N}(t)\|^2$ for $k = 1, \ldots, m$ and $i = 1, \ldots N$. Let $\Sigma= \{(l,j) : l = 1, \ldots, m \text{ and } j = 1, \ldots N\}$. From Hypotheses (H), Lemma \ref{p-estkernel} and the compactness of $\U$, it holds
\begin{align*}
\frac{1}{2}\frac{d}{dt}\left(\|y_{k,N}\|^2 + \|x_{i,N}\|^2 \right)& =\dot{y}_{k,N}\cdot y_{k,N} + \dot{x}_{i,N} \cdot x_{i,N} \\
& =  \left((K \star \mu_N)(y_{k,N}) + f_k(y)+B_k u\right) \cdot y_{k,N} + \left((K \star \mu_N)(x_i) + g(y)(x_{i,N})\right) \cdot x_{i,N}  \\
& \leq \left\| (K \star \mu_N)(y_{k,N}) \right\| \| y_{k,N} \| + f_k(y_N)\cdot y_{k,N} + \| B_k u \| \| y_{k,N} \| \\
& + \|(K \star \mu_N)(x_{i,N})\| \|x_{i,N}\| +  g(y_N)(x_{i,N})\cdot x_{i,N}  \\
& \leq  C_K\left(1 + \|y_{k,N}\| + \frac{1}{N} \sum^N_{j = 1} \|x_{j,N}\| \right) \|y_{k,N}\| + F_1\max_{l = 1, \ldots m}\|y_{l,N}\|^2 + F_2 + M_1\|y_{k,N}\| \\
&  + C_K\left(1 + \|x_{i,N}\| + \frac{1}{N} \sum^N_{j = 1} \|x_{j,N}\| \right) \|x_{i,N}\| + G_1\|x_{i,N}\|^2 + G_2\max_{l = 1, \ldots m}\|y_{l,N}\|^2 + G_3  \\
& \leq  C_1 \max_{(\ell,j) \in \Sigma}\left\{\|y_{\ell,N}\|^2 + \|x_{j,N}\|^2\right\} + C_2,
\end{align*}
with $C_1 = 4C_K + F_1 + G_2 + M_1$ and $C_2 = C_K + F_2 + G_3 + M_1$. If we denote with $b_{(k,i)}(t) = \|y_{k,N}(t)\|^2 + \|x_{i,N}(t)\|^2$ and with $a(t) = \max_{(l,j) \in \Sigma} \{b_{(l,j)}(t)\}$, then the Lipschitz continuity of $a$ implies that $a$ is a.e. differentiable, while by Stampacchia's Lemma (see for instance \cite[Chapter 2, Lemma A.4]{Kin-Sta}) for a.e. $t \in [0,T]$ there exists a $(l,j) \in \Sigma$ such that
$$\dot{a}(t) = \frac{d}{dt}\left(\|y_{l,N}(t)\|^2 + \|x_{j,N}(t)\|^2 \right) \leq 2C_1 a(t) + 2C_2.$$
Hence, Gronwall's Lemma and Definition \ref{d-mfoc}--\ref{uno} imply that
\begin{align} \label{discgronwall}
a(t) \leq (a(0) + 2C_2t) e^{2C_1t} \leq (C_0 + 2C_2t) e^{2C_1t},
\end{align}
for some uniform constant $C_0$ only depending on $y^0$ and $\supp(\mu^0)$. It then follows that the trajectories $(y_N(\cdot), \mu_N(\cdot))$ are bounded uniformly in $N$ in a ball $B(0,\rho_T) \subset \R^d$, for
$$\rho_T := \sqrt{C_0 + 2C_2T} e^{C_1T},$$
that is positive and does not depend on $t$ or on $N$. This in turn implies that the trajectories $(y_N(\cdot), \mu_N(\cdot))$ are uniformly Lipschitz continuous in $N$, as can be easily verified by computing $\|\dot{y}_{k,N}\|$ and $\|\dot{x}_{i,N}\|$ and noticing that all the functions involved are bounded by Hypotheses (H) and the fact that we are inside $B(0,\rho_T)$. Therefore
\begin{align}\label{discgronwalldot}
\|\dot y_{k,N}(t)\| \le \rho'_T\,,\quad \|\dot x_{i,N}(t)\| \le \rho'_T,
\end{align}
where the constant $\rho'_T$ does not depend on $t$ or on $N$.

By an application of the Ascoli-Arzel\`a theorem for functions on $[0,T]$ and values in the complete metric space ${\mathcal X}$, there exists a subsequence, again denoted by $(y_N(\cdot),\mu_N(\cdot))$ converging uniformly to a limit $(y(\cdot),\mu(\cdot))$, whose trajectories are also contained in $B(0,\rho_T)$.  Due to the equi-Lipschitz continuity of $(y_N(\cdot),\mu_N(\cdot))$ and the continuity of the Wasserstein distance, we thus obtain for some $L_T > 0$
\bqn
\| (y(t_2),\mu(t_2)) - (y(t_1),\mu(t_2)) \|_{\mathcal X} {= \lim_{N \rightarrow +\infty}} \| (y_N(t_2),\mu_N(t_2)) - (y_N(t_1),\mu_N(t_1)) \|_{\mathcal X} \leq L_T |t_2 - t_1|,
\eqnn
for all $t_1,t_2 \in [0,T]$. Hence, the limit trajectory $(y^*(\cdot),\mu^*(\cdot))$ belongs as well to $\Lip([0,T];{\mathcal X})$. 

It is now necessary to show that the limit $(y(\cdot),\mu(\cdot))$ is a solution of \eqref{eq:mfdyn}. We first verify that $y$ is a solution of the ODEs part for $\mu = \mu$. To this end, we observe that the limit $(y_N(\cdot),\mu_N(\cdot)) \to (y(\cdot),\mu(\cdot))$ in $\mathcal{X}$ specifies into 
\begin{equation}\label{unif}
\left \{
\begin{array}{ll}
y_N \rightrightarrows y, & \mbox{ in } [0,T],\\
\dot y_N  \rightharpoonup \dot y, & \mbox{ in } L^1([0,T],\mathbb R^{2d}).\\
\end{array}
\right .
\end{equation}
and 
\begin{equation}\label{convwas}
\lim_{N \to +\infty} \W_1 (\mu_N(t), \mu(t)) =0, 
\end{equation}
uniformly with respect to $t \in [0,T]$. As a consequence of \eqref{unif}, \eqref{convwas}, hypothesis (K), and Lemma \ref{p-lipkernel}, for all $k = 1, \ldots, m$ we have in $[0,T]$ for $N \to +\infty$
\begin{equation}\label{potr}
\begin{array}{clcl}
(K \star \mu_N) (y_{k,N}) &\rightrightarrows &  (K \star \mu) (y_{k}), \\
f_k(y_{N}) &\rightrightarrows &  f_k(y).
\end{array}
\end{equation}

To prove that $y(t)$ is actually the Carath{\'e}odory solution of \eqref{eq:discdyn}, we have only to show that for all $k=1,\dots,m$ one has
\bqn
\dot y_{k} = (K \star \mu)  (y_{k}) + f_k(y)+B_k u.
\eqnn
This is clearly equivalent to the following: for every $\eta \in \R^d$ and every $\hat t\in [0,T]$ it holds
\begin{equation}\label{claim2}
\eta \cdot \int_0^{\hat t}\dot y_{k}(t)\,dt = \eta \cdot \int_0^{\hat t}[( K\star \mu(t))(y_{k}(t)) +  f_k(y(t))+B_k u(t))]\,dt,
\end{equation} 
which follows from \eqref{potr} and from the weak $L^1$-convergence of $\dot y_{k,N}$ to $\dot y_{k}$ and of $u_N$ to $u$ for $N\to +\infty$. 

We are now left with verifying that $\mu$ is a solution of \eqref{eq:mfdyn} for $y = y$.  For all $\hat t \in [0,T]$ and for all $\phi \in C_c^1(\mathbb R^{d};\R)$ we infer that
$$
\langle \phi, \mu_N(\hat t) - \mu_N(0) \rangle = \int_0^{\hat t} \left [ \int_{\mathbb R^{d}} \nabla \phi(x) \cdot [(K \star \mu_N)(x) + g(y_N)(x)] d \mu_N(t)(x) \right ] dt,
$$
which is verified by considering the differentiation
\begin{eqnarray*} \frac{d}{dt} \langle \phi, \mu_N(t)\rangle &=&  \frac{1}{N} \frac{d}{dt} \sum_{i=1}^N \phi(x_i(t))= \frac{1}{N} \left [ \sum_{i=1}^N \nabla \phi(x_i(t)) \cdot \dot x_i(t) \right],
\end{eqnarray*}
and directly applying the substitution $\dot x_i=(K \star \mu_N)(x_i) + g(y_N)(x_i)$.  By Lemma \ref{p-lipkernel} and \eqref{convwas}, we also have that for every $\rho >0$ 
$$
\lim_{N \to +\infty}  \|K\star \mu_{N}(t) -K\star \mu(t)\|_{L^\infty(B(0,\rho))} =0 \mbox{ in } [0,T],
$$
and, as $\phi \in C_c^1(\mathbb R^{2d})$ has compact support, it follows that
$$
\lim_{N\to +\infty}  \|\nabla \phi \cdot (K\star \mu_{N}(t) -K\star \mu(t))\|_\infty =0 \mbox{ in } [0,T].
$$
Similarly, we have
$$\lim_{N\to +\infty}  \|\nabla \phi \cdot (g(y_{N}(t)) - g(y(t)))\|_{\infty} =0 \mbox{ in } [0,T],$$
by the compact support of $\phi$, the $\mathcal{C}^1$-continuity of $g$ and the uniform convergence of $y_{N}$ to $y$. Denote with $\mathcal L^1\llcorner_{[0,\hat t]}$ the Lebesgue measure on the time interval $[0,\hat t]$. Since the product measures $\mathcal L^1\llcorner_{[0,\hat t]} \times \frac{1}{\hat t} \mu_{N}(t)$ converge in $\mathcal P_1([0,\hat t] \times \mathbb R^{2d})$ to $\mathcal L^1\llcorner_{[0,\hat t]} \times \frac{1}{\hat t} \mu(t)$, we finally get
\begin{align*}
\lim_{N \to +\infty} \int_0^{\hat t} \int_{\mathbb R^{d}} \nabla \phi(x) \cdot [K\star \mu_{N}(t)+&g(y_{N}(t)](x) d \mu_{N}(t)(x) dt \\
&=  \int_0^{\hat t} \int_{\mathbb R^{d}} \nabla \phi(x) \cdot [K\star \mu(t)+g(y(t))](x) d \mu(t)(x) dt, 
\end{align*}
that, together with \r{claim2}, proves that $(y,\mu)$ is a solution of \r{eq:discdyn} with initial data $(y^0,\mu^0)$ and control $u$.
\end{proof}

\begin{corollary}
Let $y^0\in \R^{dm}$, $\mu^0\in \Pc(\R^{d})$, and $u\in L^1([0,T];\U)$. Then, there exists a solution of \eqref{eq:mfdyn} with control $u$ and initial datum $(y^0, \mu^0)$.
\end{corollary}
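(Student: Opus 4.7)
The plan is to derive the corollary as an essentially immediate consequence of Proposition \ref{p-esistenza}, reducing the general case to the empirical-measure case already handled there. Fix $(y^0,\mu^0) \in \R^{dm} \times \Pc(\R^d)$ and $u \in L^1([0,T];\U)$. Since $\mu^0$ has compact support, I can approximate it in the weak$^*$ topology by a sequence of empirical measures
$$\mu_N^0 = \frac{1}{N}\sum_{i=1}^N \delta_{x^0_{i,N}},$$
with atoms $x^0_{i,N} \in \supp(\mu^0)$ (hence a fortiori inside $\supp(\mu^0) + \overline{B(0,1)}$), so that condition \ref{uno} of Definition \ref{d-mfoc} is fulfilled. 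The existence of such a sequence is classical: one can, for instance, partition a bounding box of $\supp(\mu^0)$ into dyadic cubes of diameter $1/N$ and place in each cube a number of Dirac masses proportional to its $\mu^0$-measure, obtaining $\W_1(\mu_N^0,\mu^0) \to 0$ (which implies weak$^*$ convergence).

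Next, take the constant sequence $u_N \equiv u$; trivially $u_N \weak u$ in $L^1([0,T];\U)$. For each $N$, standard ODE theory applied to the finite-dimensional system \eqref{eq:discdyn} with initial datum $(y^0,x^0_N)$, where $x^0_N = (x^0_{1,N},\dots,x^0_{N,N})$, and control $u$ provides a unique Carathéodory solution $(y_N,x_N):[0,T]\to \R^{dm+dN}$; by the previous proposition in the section, the associated empirical measure $\mu_N(t) = \frac{1}{N}\sum_{i=1}^N \delta_{x_{i,N}(t)}$ yields a solution $(y_N,\mu_N) \in \Lip([0,T];\mathcal X)$ of the coupled system \eqref{eq:mfdyn} with control $u_N = u$ and initial datum $(y^0,\mu_N^0)$.

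All hypotheses of Proposition \ref{p-esistenza} are then satisfied, so (up to subsequence) $(y_N,\mu_N)$ converges in $\Lip([0,T];\mathcal X)$ to a limit $(y,\mu) \in \Lip([0,T];\mathcal X)$ which is a solution of \eqref{eq:mfdyn} with control $u$ and initial datum $(y^0,\mu^0)$. This proves the corollary. There is no serious obstacle here: the whole analytic content (uniform compactness of supports via the Grönwall estimate \eqref{discgronwall}, equi-Lipschitz bounds \eqref{discgronwalldot}, passage to the limit in the ODEs and in the weak formulation of the continuity equation) has already been carried out in Proposition \ref{p-esistenza}; what remains is only the verification that the approximation hypotheses can be met, which is the standard density of empirical measures in $\Pc(\R^d)$ for the $\W_1$-topology.
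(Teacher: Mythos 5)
Your proposal is correct and follows exactly the paper's argument: the paper also deduces the corollary from Proposition \ref{p-esistenza} by choosing a sequence of empirical measures $\mu^0_N$ as in Definition \ref{d-mfoc} and the constant control sequence $u_N \equiv u$. The only difference is that you spell out the standard construction of the approximating empirical measures, which the paper leaves implicit.
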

\begin{proof}
Follows from Proposition \ref{p-esistenza} by taking any sequence of empirical measures $\mu^0_N$ as in Definition \ref{d-mfoc}--\ref{uno}, and the constant sequence $u_N \equiv u$ for all $N \in \N$.
\end{proof}

The following intermediate results shall be helpful in proving the continuous dependance on the initial data.

\bp\label{p-kernel} Let $K:\R^d\to\R^d$ and $g: \R^{dm} \rightarrow \mathcal{C}^2(\R^d; \R^d)$ satisfy hypotheses (K) and (G). Then, for every $R > 0$, there exists $L'_R > 0$ satisfying $L'_R \leq C'(1 + R)$ for some $C' > 0$, and
\bqn
\|(K\star\mu^1)(x^1) -(K\star\mu^2)(x^2)\|\leq L'_R (\W_1(\mu^1,\mu^2)+\|x^1-x^2\|),
\eqnl{e-lipK2}
for all $x^1,x^2 \in B(0,R) \subset \R^d$ and $\mu^1, \mu^2 \in \PP(\R^{d})$ with $\supp(\mu^1), \supp(\mu^2) \subseteq B(0,R)$.

Moreover, for every $R > 0$, there exists $L_R > 0$ satisfying $L_R \leq C(1 + R)$ for some $C > 0$, and
\bqn
\|(K\star\mu^1)(x^1) + g(\overline{y})(x^1)-(K\star\mu^2)(x^2) - g(\overline{y})(x^2)\|\leq L_R (\W_1(\mu^1,\mu^2)+\|x^1-x^2\|),
\eqnl{e-lipK}
for all $x^1,x^2 \in B(0,R) \subset \R^d$, $\overline{y} \in B(0,R) \subset \R^{dm}$ and $\mu^1, \mu^2 \in \PP(\R^{d})$ with $\supp(\mu^1), \supp(\mu^2) \subseteq B(0,R)$. 
\ep
\bproof
By hypothesis, we have
\bqn
\|(K\star\mu^1)(x)-(K\star\mu^2)(x)\|= \left\|\int_{\R^n} K(x-x')\,d(\mu^1-\mu^2)(x')\right\|\leq \Lip_{2R}(K) \W_1(\mu^1,\mu^2),
\eqnn and
\bqn
\|(K\star\mu^1)(x^1)-(K\star\mu^1)(x^2)\|\leq\int_{\R^n} \|K(x^1-x)-K(x^2-x)\|\,d\mu^1(x)\leq \Lip_{2R}(K) \|x^1-x^2\|,
\eqnn
where $\Lip_{2R}(K)$ stands for the Lipschitz constant of $K$ on $B(0,2R)$. Since from (K) it follows
$$\Lip_{2R}(K) \leq 2C_K(1 + R),$$
this proves \eqref{e-lipK2} for $L'_R := \Lip_{2R}(K)$ and $C' := C_K$. Moreover, there exists $\xi \in \{tx^1 + (1-t)x^2 : t \in [0,1]\}$ such that
\bqn
\|g(\overline{y})(x^1)-g(\overline{y})(x^2)\|\leq \sup_{\xi \in B(0,R) \subset \R^d, \varsigma \in B(0,R) \subset \R^{dm}}\|\Jac_y g(\varsigma)(\xi)\| \|x^1- x^2\| \leq M \|x^1- x^2\|,
\eqnn
for some $M > 0$, from the regularity of $g$. It then suffices to observe that, for some $C > 0$, it holds
$$\Lip_{2R}(K) + M \leq 2C_K(1 + R) + M \leq C(1 + R).$$
This proves \r{e-lipK} for $L_R = \Lip_{2R}(K) + M$.
\eproof

The estimate in Proposition \ref{p-kernel} shows that the following general result holds for vector fields of the form $v(t,x,\mu(t)) := (K \star \mu(t) + g(y(t)))(x)$, since from Proposition \ref{p-esistenza} follows that $x, y$ and $\mu$ lie in domains with \emph{a priori} known bounds.

\bp \label{p-esistenzapde}
Let $v,w:[0,T]\times\R^d\times\PP(\R^d)\to\R^d$ be vector fields that satisfy the following hypotheses:
\begin{enumerate}%[$(i)$]
\item $v$ and $w$ are measurable with respect to $t$;
%\item $v$ and $w$ are sublinear with respect to $x$;
\item for every $R>0$ there exists $L_R$ satisfying $L_R\leq C(1+R)$ such that for all $\mu^1,\mu^2\in\PP(\R^d)$ with support in $B(0,R)$ and all $x^1, x^2 \in \R^d$ it holds
\begin{align}
\begin{split} \label{e-lipmia}
\|v(t,x^1,\mu^1)-v(t,x^2,\mu^2)\| &\leq L_R (\W_1(\mu^1,\mu^2)+\|x^1-x^2\|), \\
\|w(t,x^1,\mu^1)-w(t,x^2,\mu^2)\| &\leq L_R (\W_1(\mu^1,\mu^2)+\|x^1-x^2\|).
\end{split}
\end{align}
\end{enumerate}
Moreover, given $\mu^{0,1},\mu^{0,2}\in\mathcal P_c(\R^d)$, assume that there exist two corresponding solutions $\mu^1,\mu^2$ of \r{e-pde2} with vector fields $v,w$, respectively, and final time $T$. Then there exist constants $C_1$ and $C_2$ such that
\bqn
\W_1(\mu^1(t),\mu^2(t))\leq e^{C_1 t} \W_1(\mu^{0,1},\mu^{0,2})+\int^t_0 C_2e^{C_1 s}\sup_{x\in B(0,R)}\|v(s,x,\mu^1(s))-w(s,x,\mu^2(s))\| \, ds,
\eqnl{e-flowpde}
where $C_1$ and $C_2$ depend on the final time $T$, on the radius $R$ and $L_R$ the Lipschitz constant in \r{e-lipmia}.
\ep
\bproof
See proofs in \cite[Lemma 6.5, Lemma 6.6, Theorem 6.8]{fornasier2014mean}.
\eproof

We now prove the continuous dependence on the initial data, that also gives uniqueness of the solution for \r{eq:mfdyn}.

\begin{proposition} \label{p-unique}
Let the Hypotheses (H) hold. Let $u \in L^1([0,T],\U)$ be given, and take two solutions $(y^1,\mu^1)$ and $(y^2,\mu^2)$ of  \eqref{eq:mfdyn} with control $u$ and with initial data $(y^{0,1},\mu^{0,1}), (y^{0,2},\mu^{0,2}) \in \mathcal X$, respectively, where $\mu^{0,1}$ and $\mu^{0,2}$ have both compact support. Then there exists a constant $C_T>0$ such that
\begin{equation*}
\| (y^1(t),\mu^1(t)) - (y^2(t),\mu^2(t)) \|_{\mathcal X} \leq C_T  \| (y^{0,1},\mu^{0,1}) - (y^{0,2},\mu^{0,2}) \|_{\mathcal X}, \quad \mbox{for all } t \in [0,T]
\end{equation*}
\end{proposition}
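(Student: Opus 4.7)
My plan is to reduce the statement to a coupled Gronwall-type inequality for the quantity
$$\Phi(t) := \|y^1(t)-y^2(t)\| + \W_1(\mu^1(t),\mu^2(t)),$$
estimating the ODE and PDE components separately and then coupling them. As a preliminary step, I would invoke Proposition \ref{p-esistenza} (applied, e.g., to constant sequences of controls and to sequences of empirical measures approximating $\mu^{0,1}$ and $\mu^{0,2}$) to obtain a radius $R_T>0$, depending only on the initial data and the structural constants, such that both $y^i(t)$ and $\supp(\mu^i(t))$ are contained in $B(0,R_T)$ for every $t\in[0,T]$ and $i=1,2$. This is essential to be able to apply all the Lipschitz-type estimates on the bounded set $B(0,R_T)$.

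For the ODE component, subtracting the two equations one gets
$$\dot y^1_k(t)-\dot y^2_k(t)=\big[(K\star\mu^1(t))(y^1_k(t))-(K\star\mu^2(t))(y^2_k(t))\big]+\big[f_k(y^1(t))-f_k(y^2(t))\big],$$
since the control terms cancel. Applying the Lipschitz estimate \eqref{e-lipK2} from Proposition \ref{p-kernel} to the first bracket, and exploiting the $\cal{C}^2$-regularity of $f_k$ restricted to the bounded set where $y^i$ evolves, I obtain
$$\|\dot y^1(t)-\dot y^2(t)\|\leq C_1\bigl(\|y^1(t)-y^2(t)\|+\W_1(\mu^1(t),\mu^2(t))\bigr)$$
for a constant $C_1=C_1(R_T)$. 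Integrating in time yields an estimate of $\|y^1(t)-y^2(t)\|$ in terms of $\|y^{0,1}-y^{0,2}\|$ and of $\int_0^t\Phi(s)\,ds$.

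For the PDE component, I would apply Proposition \ref{p-esistenzapde} with the vector fields
$$v(t,x,\mu):=(K\star\mu)(x)+g(y^1(t))(x),\qquad w(t,x,\mu):=(K\star\mu)(x)+g(y^2(t))(x),$$
whose Lipschitz hypothesis \eqref{e-lipmia} on $B(0,R_T)$ is ensured by Proposition \ref{p-kernel} and the a priori bound on $\|y^i(t)\|$. The key step is to control the driving term
$$\sup_{x\in B(0,R_T)}\|v(s,x,\mu^1(s))-w(s,x,\mu^2(s))\|\leq \sup_x\|K\star(\mu^1-\mu^2)(s)(x)\|+\sup_x\|g(y^1(s))(x)-g(y^2(s))(x)\|,$$
which, by Lemma \ref{p-lipkernel} together with the regularity of $g$, is bounded by $C_2\,\Phi(s)$. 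Plugging this into \eqref{e-flowpde} produces an integral inequality for $\W_1(\mu^1(t),\mu^2(t))$ of the same form as the one obtained for $\|y^1(t)-y^2(t)\|$.

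Summing the two estimates gives
$$\Phi(t)\leq C_3\,\Phi(0)+C_4\int_0^t\Phi(s)\,ds,$$
and a standard application of Gronwall's lemma yields the desired bound with $C_T=C_3 e^{C_4 T}$. I do not foresee a substantive obstacle: the only subtle point is to make sure that all Lipschitz constants are evaluated on the a priori ball $B(0,R_T)$ provided by Proposition \ref{p-esistenza}, so that the constants $C_1, C_2, C_3, C_4$ depend only on $T$ and on the structural data of the problem, and not on the particular pair of solutions considered.
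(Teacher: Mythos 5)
Your strategy coincides with the paper's: an a priori bound on supports and trajectories, the ODE estimate via \eqref{e-lipK2}, the PDE estimate via Proposition \ref{p-esistenzapde}, and a combined Gronwall argument on $\|y^1-y^2\|+\W_1(\mu^1,\mu^2)$. Two steps, however, need repair. First, you cannot obtain the a priori radius for the two \emph{given} solutions by invoking Proposition \ref{p-esistenza}: that proposition constructs \emph{one} solution with trajectories in $B(0,\rho_T)$, and identifying the given solutions with the constructed one requires uniqueness, which is precisely what Proposition \ref{p-unique} is meant to deliver --- as written the step is circular. The paper avoids this by reading the bound directly off the definition of a solution (uniformly compact support of $\mu^i$ from Definition \ref{d-pde12} and continuity of the Carath\'eodory solution $y^i$ give a radius $\rho_T$, possibly depending on the two solutions, which is all the statement requires). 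Second, the Lipschitz hypothesis \eqref{e-lipmia} of Proposition \ref{p-esistenzapde} is required for \emph{all} $x^1,x^2\in\R^d$, whereas Proposition \ref{p-kernel} only provides it for $x^1,x^2\in B(0,R)$; since $K$ is merely $\mathcal{C}^2$ and sublinear, $K\star\mu$ need not be globally Lipschitz in $x$. The paper fixes this by multiplying the vector fields by a smooth cutoff equal to $1$ on $B(0,\rho_T)$, which does not alter the solutions since they are supported there; your argument needs the same localization before \eqref{e-flowpde} can be applied. With these two repairs, the remaining estimates --- the bound on the driving term by a multiple of $\|y^1-y^2\|+\W_1(\mu^1,\mu^2)$ and the final Gronwall step --- go through exactly as in the paper.
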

\begin{proof}
We start by noticing that, by the definition of a solution, we infer the existence of a $\rho_T > 0$ for which $y^1(\cdot),y^2(\cdot) \in B(0,\rho_T) \subset \R^{dm}$ and $\supp(\mu^1(\cdot)),\supp(\mu^2(\cdot)) \subseteq B(0,\rho_T) \subset \R^d$.

We shall show the continuous dependence estimate by chaining the stability of the ODE
\bqn
\dot{y}_k(t) = (K \star \mu(t))(y_k(t)) + f_k(y(t))+B_k u(t), \quad k = 1, \ldots, m,
\eqnl{e-ode}
with the one of the PDE
\bqn
\partial_t \mu(t) = -\nabla_x \cdot \left[(K \star \mu(t) + g(y(t)))\mu(t) \right],
\eqnl{e-pde}
first addressing the dependence of \eqref{e-ode}. By integration we have 
\begin{align}
\begin{split}\label{firstest}
\|y_k^1(t) - y_k^2(t)\| \leq & \|y_k^{0,1} - y_k^{0,2}\| \\
&+ \int_0^t \left(\|(K\star \mu^1(s))(y_k^1(s))- (K\star\mu^2(s))(y_k^2(s))\| +\|f_k(y^1(s))-f_k(y^2(s))\|\right)\,ds.
\end{split}
\end{align}
For the sake of notation, we shall denote with
\begin{align*}
F = & \max_{k = 1, \ldots, m} \Lip_{\rho_T}(f_k),\\
G = & \sup_{\xi \in B(0,\rho_T) \subset \R^d, \varsigma \in B(0,\rho_T) \subset \R^{dm}}\|\Jac_y g(\varsigma)(\xi)\|.
\end{align*}
For the left-hand side of \eqref{firstest}, \eqref{e-lipK2}, the $\mathcal{C}^2$-regularity of $f_k$ for $k = 1, \ldots, m$, and the uniform bound on $y^1(\cdot)$ and $y^2(\cdot)$ yield
\bqn
\|y_k^1(t) - y_k^2(t)\| &\leq& \|y_k^{0,1} - y_k^{0,2}\| + \label{e-odestima}\\
&&+\int_0^t \left(L'_{\rho_T} \W_1(\mu^1(s),\mu^2(s))+ L'_{\rho_T}\|y^1_k(s)-y_k^2(s))\| +F\|y^1(s)-y^2(s)\| \right) \, ds
\eqnn
We now consider \eqref{e-pde}. Define the vector fields 
$$v^1(t,x,\mu):=(K \star \mu + g(y^1(t)))(x),\qquad\qquad
v^2(t,x,\mu):=(K \star \mu + g(y^2(t)))(x),$$
and let $\Phi:\R^d \to \R$ be a $\mathcal{C}^{\infty}$ cutoff function on $B(0,\rho_T)$ with $\|\nabla \Phi \| \leq 1$ and compact support in $\R^d$. Observe that, since $\|y^1(\cdot)\|,\|y^2(\cdot)\| \leq \rho_T$ and $\supp(\mu^1(\cdot)),\supp(\mu^2(\cdot)) \subseteq B(0,\rho_T)$, then $\mu^1$ and $\mu^2$ also solve \eqref{e-pde2} with $\Phi v^1$ and $\Phi v^2$ in place of $v^1$ and $v^2$, respectively. It then follows easily from Proposition \ref{p-kernel} that Proposition \ref{p-esistenzapde} holds for $v = \Phi v^1$ and $w = \Phi v^2$. Hence, from \eqref{e-flowpde} and taking into account that $v^1 = \Phi v^1$ and $v^2 = \Phi v^2$ in $B(0, \rho_T)$, we have
\bqn
\W_1(\mu^1(t), \mu^2(t)) &\leq& e^{C_1t}\W_1(\mu^{0,1},\mu^{0,2})+\int^t_0 C_2e^{C_1s} \sup_{x\in B(0,\rho_T)} \|v^1(s,x,\mu^1(s))-v^2(s,x,\mu^2(s))\| \, ds,
\eqnn
for some constants $C_1$ and $C_2$. By \r{e-lipK} and the regularity of $g$, for every $s \in [0, T]$ we have
\bqn
\|v^1(s,x,\mu^1(s))-v^2(s,x,\mu^2(s))\|\leq L_{\rho_T} \W_1(\mu^1(s),\mu^2(s))+G\|y^1(s)-y^2(s)\|.
\eqnn
This gives
\bqn
\W_1(\mu^1(t), \mu^2(t)) &\leq& e^{C_1t}\W_1(\mu^{0,1},\mu^{0,2}) \nonumber \\
& & +\int^t_0 C_2e^{C_1s} \Pt{L_{\rho_T} \W_1(\mu^1(s),\mu^2(s))+G\|y^1(s)-y^2(s)\|} \, ds.
\eqnl{e-pdestima}
We now consider the function 
\bqn
\eps(t):= \|(y^1(t),\mu^1(t))-(y^2(t),\mu^2(t))\|_{\mathcal X}
\eqnn
and, combining \r{e-odestima} for each $k = 1,\ldots,m$ and \r{e-pdestima}, we obtain
\begin{align*}
\eps(t) \leq &  \|y^{0,1} - y^{0,2}\| +\int_0^t \left(L'_{\rho_T} \W_1(\mu^1(s),\mu^2(s))+ L'_{\rho_T}\|y^1(s)-y^2(s))\| +mF\|y^1(s)-y^2(s)\| \right) \, ds \\
& + e^{C_1t}\W_1(\mu^{0,1},\mu^{0,2})+\int^t_0 C_2e^{C_1s} \Pt{L_{\rho_T} \W_1(\mu^1(s),\mu^2(s))+G\|y^1(s)-y^2(s)\|} \, ds \\
\leq & \eps(0) e^{C_1t} + \int^t_0 (L'_{\rho_T} + mF + (L_{\rho_T} + G)C_2 e^{C_1s}) \eps(s) \, ds.
\end{align*}
Gronwall's lemma then implies
\bqn
\eps(t)\leq \eps(0) e^{C_1t}\left((L'_{\rho_T} + mF)t + \frac{(L_{\rho_T} + G)C_2}{C_1} (e^{C_1t} - 1)\right).
\eqnn
Since $t \in [0,T]$, the result is proved.
\end{proof}

\begin{remark}
Going back to the application of the Ascoli-Arzel\'a Theorem in Proposition \ref{p-esistenza}, consider another converging subsequence of $(y_N,\mu_N)$. We can prove that its limit is another solution of \r{eq:discdyn}. Since the solution is unique for Proposition \ref{p-unique}, we have that all converging subsequences of $(y_N,\mu_N)$ have the same limit, hence the sequence $(y_N,\mu_N)$ has itself limit $(y^*,\mu^*)$.
\end{remark}

\begin{remark}
Since equicompactly supported solutions are unique, given the initial datum, by Proposition \ref{p-unique}, combined with Proposition \ref{p-esistenza} we infer that the support of the unique solution can be estimated as a function of the data, namely it is contained in a ball $B(0, \rho_T)$, where the constant is depending only on $y^0, \supp(\mu^0), K, g, f_k, B_k, \mathcal{U},$ and $T$.
\end{remark}

\subsection{Existence and construction of solutions of Problem \ref{problemPDE}}

In this section, we prove that Problem \ref{problemPDE} admits a solution which is a mean-field optimal control. The proof generalizes similar results in \cite{fornasier2014mean}.

We first recall the main definition of $\Gamma$-convergence. We then define the sequence of functionals $(F_N)_{N \in \N}$ related to Problem \ref{problemODE} and $F$ related to Problem \ref{problemPDE} and prove that $(F_N)_{N \in \N}$ $\Gamma$-converge to $F$. 

\begin{definition} [\( \Gamma \)-convergence]
  \label{def:gamma-conv}
  \cite[Definition 4.1, Proposition 8.1]{93-Dal_Maso-intro-g-conv}
  Let \( X \) be a metrizable separable space and \( F_N \colon X \rightarrow (-\infty,\infty] \), \( N \in \mathbb{N} \) be a sequence of functionals. We say that \( (F_N)_{N \in \N} \) \emph{\( \Gamma \)-converges} to \( F \), written as \( F_N \xrightarrow{\Gamma} F \), for a given \( F \colon X \rightarrow (-\infty,\infty] \), if
  \begin{enumerate}
  \item \emph{\( \liminf \)-condition:} For every \( u \in X \) and every sequence \( u_N \rightarrow u \),
    \begin{equation*}
      F(u) \leq \liminf_{N\rightarrow+\infty} F_N(u_N);
    \end{equation*}
  \item \emph{\( \limsup \)-condition:} For every \( u \in X \), there exists a sequence \( u_N \rightarrow u \), called \emph{recovery sequence}, such that
    \begin{equation*}
      F(u) \geq \limsup_{N\rightarrow+\infty} F_N(u_N).
    \end{equation*}
  \end{enumerate}

Furthermore, we call the sequence \( (F_N)_{N \in \N} \) \emph{equi-coercive} if for every \( c \in \mathbb{R} \) there is a compact set \( K \subseteq X \) such that \( \left\{ u : F_N(u) \leq c \right\} \subseteq K \) for all \( N \in \mathbb{N} \). As a direct consequence of equi-coercivity,  {assuming \( u_N^* \in \arg \min F_N \neq \emptyset \)} for all $N \in \mathbb N$, there is a subsequence \( (u_{N_k}^*)_{k \in \N} \) and \( u^\ast \in X \) such that
  \begin{equation*}
    u_{N_k}^* \rightarrow u^\ast \in \arg \min F.
  \end{equation*}
\end{definition}

%In the following we assume that Hypotheses (H) in Section \ref{s-hp} hold, so that dynamics \eqref{eq:mfdyn} is well-posed, for a given control $u$ and suitable initial conditions.
In view of the definition of $\Gamma$-convergence, let us fix as our domain $X = L^1([0,T];\U)$ which, endowed with the weak $L^1$-topology, is actually a metrizable space.

Fix now an initial datum $(y^0,\mu^0) \in \mathcal X$, with $\mu^0$ compactly supported, and choose a sequence $\mu_N^0$ as in Definition \ref{d-mfoc}--\ref{uno}.

Consider the functional $F(u)$ on $X$ defined in \r{Ffun}, where the pair $(y,\mu)$ defines the unique solution of \eqref{eq:mfdyn} with initial datum $(y^0,\mu^0)$ and control $u$. Similarly, consider the functional $F_N(u)$ on $X$ defined in \r{FNfun}, where the pair $(y_N,\mu_N)$ defines the unique solution of \eqref{eq:mfdyn} with initial datum $(y^0,\mu^0_N)$ and control $u$. As recalled in Proposition \ref{p-esistenza}, such solution coincides with the solution of the ODE system \r{eq:discdyn}.

The rest of this section is devoted to the proof of the $\Gamma$-convergence of the sequence of functionals $(F_N)_{N \in \mathbb N}$ on $X$ to the target functional $F$. Let us mention that $\Gamma$-convergence in optimal control problems has been already considered, see for instance \cite{BuDm82}, but, to our knowledge, it has been only recently specified in connection to mean-field limits in \cite{fornasier2014mean,MFOC}.

\begin{theorem}\label{thm:gamma}
Let the functionals \r{Ffun}-\r{FNfun} and dynamics \r{eq:mfdyn} satisfy Hypotheses (H). Consider an initial datum $(y^0,\mu^0) \in \R^{dm}\times \mathcal{P}_1(\R^d)$, and a sequence $(\mu^0_N)_{N \in \N}$, where $\mu_N^0$ is as in Definition \ref{d-mfoc}--\ref{uno}. Then the sequence of functionals $(F_N)_{N \in \mathbb N}$ on $X=L^1([0,T];\U)$ defined in \eqref{FNfun} $\Gamma$-converges to the functional $F$ defined in \eqref{Ffun}.
\end{theorem}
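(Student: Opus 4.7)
The plan is to establish the two $\Gamma$-convergence conditions using Proposition \ref{p-esistenza} as the key stability tool. First, observe that $X = L^1([0,T];\U)$ with the weak $L^1$-topology is metrizable and sequentially compact, since $\U$ is compact; thus it suffices to check sequential conditions. The core technical ingredient is that whenever $u_N \rightharpoonup u$ weakly in $L^1$, Proposition \ref{p-esistenza} yields uniform convergence of the corresponding trajectories $(y_N, \mu_N) \to (y, \mu)$ in $\mathcal{X}$ on $[0,T]$, with all of them remaining in a fixed ball $B(0, \rho_T)$; by the uniqueness result of Proposition \ref{p-unique}, the whole sequence (and not only a subsequence) converges.

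For the $\limsup$-condition I would take the constant recovery sequence $u_N \equiv u$, which trivially converges to $u$ in the weak $L^1$-topology. By the stability above, $(y_N(t),\mu_N(t)) \to (y(t),\mu(t))$ uniformly in $t$. The running cost $L(y,\mu)=\int_{\R^d}\ell(y,x,\int\omega\mu)\,d\mu(x)$ is continuous in $(y,\mu)$ with respect to $\|\cdot\|\times\W_1$ on the bounded support set: weak convergence of $\mu_N$ with uniformly compact support gives $\int\omega\,d\mu_N\to\int\omega\,d\mu$, and the $\cal{C}^2$-regularity of $\ell$ then ensures uniform convergence of $\ell(y_N,\cdot,\int\omega\,d\mu_N)$ to $\ell(y,\cdot,\int\omega\,d\mu)$ on the common support, which coupled with $\mu_N\rightharpoonup\mu$ (via an elementary splitting of the integrand) gives $L(y_N(t),\mu_N(t))\to L(y(t),\mu(t))$ uniformly in $t$. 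The integral $\int_0^T L\,dt$ therefore passes to the limit, while the $\gamma$-term is unchanged; hence $F_N(u_N)\to F(u)$, which in particular implies $\limsup_N F_N(u_N)\leq F(u)$.

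For the $\liminf$-condition, take any $u_N\rightharpoonup u$ weakly in $L^1$. The same stability-and-continuity argument gives $\int_0^T L(y_N(t),\mu_N(t))\,dt\to\int_0^T L(y(t),\mu(t))\,dt$. For the control part, the convexity of $\gamma$ on the compact $\U$ makes the functional $v\mapsto\int_0^T\gamma(v(t))\,dt$ convex and continuous for strong $L^1$ convergence, and hence lower semicontinuous for weak $L^1$ convergence by a classical result on convex integral functionals (for instance via Mazur's lemma). Summing the two contributions yields $F(u)\leq\liminf_N F_N(u_N)$.

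The main obstacle I expect is the continuity of $L$ with respect to the weak Wasserstein convergence of the measures, especially handling the nonlocal dependence on $\int\omega\,d\mu$ that appears inside $\ell$. Thanks to Hypothesis (L) and the uniform compact-support bounds coming from Proposition \ref{p-esistenza}, this reduces to a routine double-limit argument. All other ingredients (stability of the coupled ODE-PDE system under weak $L^1$-convergence of controls and Wasserstein convergence of initial data, and weak-$L^1$ lower semicontinuity of convex integrals) are standard once the uniform bounds are in place.
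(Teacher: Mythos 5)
Your proposal is correct and follows essentially the same route as the paper: both arguments rest on the trajectory stability of Proposition \ref{p-esistenza} under weak $L^1$-convergence of controls to pass the $L$-term to the limit, use the constant recovery sequence for the $\limsup$-condition, and reduce the $\liminf$-condition to weak lower semicontinuity of the convex control cost. The only (immaterial) difference is that you justify the latter via Mazur's lemma, whereas the paper cites Ioffe's theorem.
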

\begin{proof}
Let us start by showing the $\Gamma-\liminf$ condition. Let us fix a weakly convergent sequence of controls $u_N \rightharpoonup u^*$ in $X$. We associate to each of these controls a sequence of solutions $(y_N,\mu_N)$ of \eqref{eq:mfdyn} uniformly convergent to a solution $(y^*,\mu^*)$ with control $u^*$ and initial datum  $(y^0,\mu^0)$. In view of the fact that solutions $(y_N,\mu_N)$ and $(y^*,\mu^*)$ will have uniformly bounded supports with respect to $N$ and $t \in [0,T]$ and by the uniform convergence of trajectories $y_N(t) \rightrightarrows y^*(t)$ as well as the uniform convergence $\W_1(\mu_N(t),\mu^*(t)) \to 0$ for $t \in [0,T]$, it follows from the continuity of $L$ under Hypotheses (H) that
\begin{eqnarray}
&&\lim_{ N \to +\infty} \int_{0}^T   L(y_{N}(t),\mu_{N}(t)) =\int_{0}^T  L (y^{*}(t),\mu^*(t)) dt. \label{lowsem1}
\end{eqnarray}
By the assumed weak convergence of $(u_N)_{N \in \mathbb N}$ to $u^* \in X$ and Ioffe's Theorem (see, for instance, \cite[Theorem 5.8]{AFP00}) we obtain the lower-semicontinuity of $\gamma$
\begin{equation}
\liminf_{N \to +\infty} \int_0^T  \gamma(u_N(t))  dt \geq  \int_{0}^T \gamma(u^*(t)) dt. \label{lowsem2}
\end{equation}
By combining \eqref{lowsem1} and \eqref{lowsem2}, we immediately obtain the $\Gamma-\liminf$ condition
$$
\liminf_{N \to +\infty} F_N(u_N) \geq F(u^*).
$$
We now prove the $\Gamma-\limsup$ condition. We now fix $u^*$ and consider the trivial recovery sequence $u_N \equiv u^*$ for all $N \in \mathbb N$. Similarly as above for the argument of the $\Gamma-\liminf$ condition, we can associate to each of these controls a sequence of solutions $(y_N(t),\mu_N(t))$ of \eqref{eq:mfdyn} uniformly convergent to a solution $(y^*(t),\mu^*(t))$ with control $u^*$ and initial datum  $(y^0,\mu^0)$ and we can similarly conclude the limit \eqref{lowsem1}. Additionally, since $(u_N)_{N \in \mathbb N}$ is a constant sequence, we have
\begin{equation}
\liminf_{N \to +\infty}   \int_0^T  \gamma(u_N(t))  dt = \int_{0}^T \gamma( u^*(t))  dt. \label{lowsem3}
\end{equation}
Hence, combining \eqref{lowsem1} and \eqref{lowsem3} we can easily infer
$$
\limsup_{N \to \infty} F_N(u_N) = \lim_{N \to \infty} F_N(u^*) =  F(u^*).
$$
\end{proof}

\begin{corollary} \label{c-gamma} Let the Hypotheses (H) in Section \ref{s-hp} hold. For every initial datum $(y^0,\mu^0) \in \R^{dm}\times\Pc(\R^d)$, there exists a mean-field optimal control $u^*$ for Problem \ref{problemPDE}.
%{\color{red}Given an initial datum $(y^0,\mu^0) \in \mathcal X$,  with $\mu^0$ satisfying $\supp(\mu^0) \subset B(0,R)$, for some $R>0$, then Problem \ref{problemPDE} has solutions. Moreover, solutions $u^*$ can be constructed as weak limits of sequences of optimal controls $u_{N}^*$ of Problem \ref{problemODE} with initial datum $(y^0,x^0_N)$, where $x^0_N$ are in $ B(0,R)$ and satisfies $\W_1(\mu^0_N, \mu^0) \to 0$ for $N \to +\infty$.}
\end{corollary}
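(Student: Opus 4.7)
The plan is to combine the $\Gamma$-convergence statement of Theorem \ref{thm:gamma} with the classical equi-coercivity argument recalled in Definition \ref{def:gamma-conv}, and then to verify that the resulting limit control fits the requirements of Definition \ref{d-mfoc}.

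First, I would fix a sequence of empirical measures $(\mu_N^0)_{N \in \N}$ as in Definition \ref{d-mfoc}--\ref{uno}. Since $\mu^0 \in \Pc(\R^d)$, any standard quantization procedure (for instance, projecting an equispaced discretization of $\supp(\mu^0)+\overline{B(0,1)}$ onto $1/N$-weighted atoms) produces points $x_{i,N}^0 \in \supp(\mu^0)+\overline{B(0,1)}$ such that $\mu_N^0 := \frac{1}{N}\sum_{i=1}^N \delta_{x_{i,N}^0}$ weakly$^*$ converges to $\mu^0$. Second, I would show that for each fixed $N \in \N$ Problem \ref{problemODE} with initial datum $(y^0,\mu_N^0)$ admits at least one optimal control $u_N^* \in L^1([0,T];\U)$, by the direct method of the Calculus of Variations on the set $X:=L^1([0,T];\U)$ endowed with the weak $L^1$-topology. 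Since $\U$ is compact, the whole of $X$ is bounded in $L^\infty$, hence uniformly integrable, and is convex and strongly closed, hence (by Dunford--Pettis and Mazur's lemma) weakly sequentially compact. The weak sequential lower semicontinuity of $F_N$ on $X$ is obtained by exactly the same argument used for the $\Gamma$-$\liminf$ inequality in the proof of Theorem \ref{thm:gamma}, restricted to a single fixed $N$: continuous dependence of the trajectory on the control (Propositions \ref{p-esistenza} and \ref{p-unique}) allows passage to the limit in the running cost $L$, while Ioffe's theorem applied to the convex cost $\gamma$ controls the liminf of the control cost.

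Since $X$ is itself weakly compact, the sequence $(F_N)_{N \in \N}$ is trivially equi-coercive (every sublevel is contained in $K=X$). The sequence of finite-dimensional minimizers $(u_N^*)_{N \in \N}$ therefore admits a subsequence weakly converging in $L^1([0,T];\U)$ to some $u^* \in X$, and by the standard principle combining $\Gamma$-convergence with equi-coercivity recalled after Definition \ref{def:gamma-conv}, together with Theorem \ref{thm:gamma}, the limit $u^*$ lies in $\arg\min F$, i.e.\ it is an optimal control for Problem \ref{problemPDE}. Conditions \ref{uno} and (ii) of Definition \ref{d-mfoc} are satisfied by construction of $(\mu_N^0)_{N \in \N}$ and $(u_N^*)_{N \in \N}$, while (iii) is precisely the extraction just performed; hence $u^*$ is a mean-field optimal control.

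The main obstacle is essentially already settled in the preceding sections: passing to the limit in $F_N$ along weakly convergent controls and along suitably convergent initial data is the content of the $\Gamma$-$\liminf$ inequality of Theorem \ref{thm:gamma}, and the existence of minimizers for fixed $N$ reduces, via the direct method, to applying that same argument with $N$ and $\mu_N^0$ frozen. Once these ingredients are in place, the corollary becomes a one-line consequence of the fundamental theorem of $\Gamma$-convergence.
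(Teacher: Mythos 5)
Your proposal is correct and follows essentially the same route as the paper: equi-coercivity coming from the weak-$L^1$ compactness of $X=L^1([0,T];\U)$, combined with the $\Gamma$-convergence of Theorem \ref{thm:gamma} and the chaining of the $\liminf$/$\limsup$ inequalities to show the weak limit of the discrete minimizers minimizes $F$. The only cosmetic difference is that you re-derive existence of minimizers of $F_N$ for fixed $N$ via the direct method, whereas the paper simply invokes Proposition \ref{p-esiODE} (Clarke's existence theorem); both are valid.
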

\begin{proof}
Consider empirical measures $\mu_N^0$ as in Definition \ref{d-mfoc}--\ref{uno}. Notice that the optimal controls $u_N^*$ of Problem \ref{problemODE} belong to $X=  L^1([0,T];\U)$, which is a compact set with respect to the weak topology of $L^1$. Hence, the sequence $(F_N)_{N \in \N}$ is equicoercive, and $(u_N^*)_{N \in \mathbb N}$ admits a  subsequence, which we do not relabel, weakly convergent to some $u^* \in X$.

We can associate to each of these controls $u^*_N$ and initial data $(y^0,\mu^0_N)$ a solution $(y_N,\mu_N)$ of \eqref{eq:mfdyn}. The sequence of solutions $(y_N,\mu_N)$ is then uniformly convergent to a solution $(y^*,\mu^*)$ of \eqref{eq:mfdyn} with control $u^*$, by Proposition \ref{p-esistenza}. In order to conclude that $u^*$ is an optimal control for Problem \ref{problemPDE} (and hence, by construction, that $u^*$ is a \emph{mean-field optimal control}) we need to show that it is actually a minimizer of $F$. For that we use the fact that $F$ is the $\Gamma$-limit of the sequence $(F_N)_{N \in \mathbb N}$ as proved in Theorem \ref{thm:gamma}. Let $u \in X$ be an arbitrary control and let $(u_N)_{N \in \mathbb N}$ be a recovery sequence given by the $\Gamma-\limsup$ condition, so that
\begin{eqnarray}
F(u) \geq \limsup_{N \to +\infty} F_N(u_N).\label{firstin}
\end{eqnarray}
By using now the optimality of $(u_N^*)_{N \in \mathbb N}$, we have
\begin{eqnarray}
 \limsup_{N \to +\infty} F_N(u_N) \geq \limsup_{N \to +\infty} F_N(u_N^*) \geq \liminf_{N \to +\infty} F_N(u_N^*).\label{secondin}
\end{eqnarray}
Applying the  $\Gamma-\liminf$ condition yields
\begin{eqnarray}
  \liminf_{N \to +\infty} F_N(u_N^*) \geq F(u^*).\label{thirdin}
\end{eqnarray}
By chaining the inequalities \eqref{firstin}-\eqref{secondin}-\eqref{thirdin} we have 
$$
F(u) \geq F(u^*), \quad \mbox{ for all } u \in X,
$$
i.e., that $u^*$ is an optimal control.
\end{proof}

\begin{remark} Observe that the previous result does not state uniqueness of the optimal control for the infinite dimensional problem. Indeed, in general, we cannot ensure that {\it all} solutions of Problem \ref{problemPDE} are mean-field optimal controls.
\end{remark}

\section{The finite-dimensional problem}
\label{s-ODE}

In this section we study the discrete Problem \ref{problemODE} and state the PMP for it. We first recall the following existence result for the optimal control problem.

\bp[Theorem 23.11, \cite{clarke2013functional}] \label{p-esiODE}
Under Hypotheses (H), Problem \ref{problemODE} admits solutions.
\ep

We now introduce the adjoint variables of $x_i$ and $y_k$, denoted by $p_i$ and $q_k$, respectively, and state the PMP in the following box.

\begin{framed}
\begin{theorem}[Theorem 22.2, \cite{clarke2013functional}]\label{PMPODE}
Let $u^*_N$ be a solution of Problem \ref{problemODE} with initial datum $(y(0),x(0)) = (y^0, x^0)$, and denote with $(y^*(\cdot), x^*(\cdot)): [0,T] \to \R^{dm + dN}$ the corresponding  trajectory. Then there exists a Lipschitz curve $(y^*(\cdot), q^*(\cdot), x^*(\cdot), p^*(\cdot))\in \mathrm{Lip}([0,T],\R^{2dm + 2dN})$ solving the system
\begin{align} \label{eq:PMPdiscrete}
\begin{cases}
\begin{split}
\dot{y}^*_k & = \nabla_{q_k} \mathbb{H}_N(y^*, q^*, x^*, p^*,u^*) \\
\dot{q}^*_k & = - \nabla_{y_k} \mathbb{H}_N(y^*, q^*, x^*, p^*,u^*)
\end{split} \quad k = 1, \ldots, m, \\
\begin{split}
\dot{x}^*_i & = \nabla_{p_i} \mathbb{H}_N(y^*, q^*, x^*, p^*,u^*) \\
\dot{p}^*_i & = - \nabla_{x_i} \mathbb{H}_N(y^*, q^*, x^*, p^*,u^*)
\end{split} \quad i = 1, \ldots, N,\\
\begin{split}
\!\!\!\!\!\!\!\!u^*_N  = \arg \max_{u \in \U} \H_N(y^*, q^*, x^*, p^*, u), \\
\end{split}
\end{cases}
\end{align}
with initial datum $(y(0),x(0)) = (y^0, x^0)$ and terminal datum $(q(T),p(T)) = 0$, where the Hamiltonian $\H_N: \R^{2dm + 2dN} \to \R$ is given by
\begin{align}
\begin{split}\label{e-u}
\H_N(y, q,x, p, u) & = \sum^N_{i = 1} p_i \cdot \left(\frac{1}{N}\sum^N_{j = 1}K(x_i - x_j) + g(y)(x_i)\right) + \\
&\quad + \sum^m_{k = 1} q_k \cdot \left(\frac{1}{N}\sum^N_{j = 1}K(y_k - x_j)+ f_k(y)+B_k u \right) - L(y,\mu_N) - \gamma(u),
\end{split}
\end{align}
with $\mu_N = \frac{1}{N} \sum^N_{i = 1}\delta(x - x_i)$.
\end{theorem}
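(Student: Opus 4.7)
The plan is to recognize Theorem \ref{PMPODE} as a direct application of the classical Pontryagin Maximum Principle in the form of \cite[Theorem 22.2]{clarke2013functional} to the ODE system \eqref{eq:discdyn}, now viewed as a single controlled dynamical system on the aggregated state space $\R^{d(m+N)}$. Accordingly, the proof reduces to (i) repackaging Problem \ref{problemODE} in the standard Mayer/Lagrange format required by Clarke's theorem, (ii) verifying that Hypotheses (H) imply all the regularity, convexity, and compactness assumptions of that theorem, and (iii) reading off the first-order necessary conditions and matching them with \eqref{eq:PMPdiscrete}.

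First, I would introduce the stacked state variable $z = (y,x) \in \R^{dm+dN}$ and rewrite \eqref{eq:discdyn} as $\dot{z}(t) = \Phi(z(t), u(t))$, where $\Phi$ collects the right-hand sides of both blocks. By Hypotheses (K), (G), (F) the map $\Phi$ is of class $\cal{C}^2$ in $z$ and affine (hence smooth) in $u$. The running cost $\ell(z,u) := L(y,\mu_N) + \gamma(u)$ is continuous in $(z,u)$ by (L) and ($\gamma$) and is $\cal{C}^2$ in $z$ on bounded sets, as $\mu_N$ depends smoothly on the $x_i$'s. The control set $\cal{U}$ is compact and convex by (U). Existence of a minimizer $u^*_N$, which is required before the PMP can even be invoked, is provided by Proposition \ref{p-esiODE}, and gives an admissible arc $(y^*,x^*)$ on $[0,T]$.

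Now I would apply \cite[Theorem 22.2]{clarke2013functional}. The pre-Hamiltonian in Clarke's convention is $H(z,\zeta,u) = \zeta \cdot \Phi(z,u) - \ell(z,u)$; writing $\zeta = (q,p) \in \R^{dm}\times\R^{dN}$ and expanding the scalar product with the explicit form of $\Phi$ produces exactly $\H_N(y,q,x,p,u)$ as given in \eqref{e-u}. The canonical equations $\dot{z} = \nabla_{\zeta} H$ and $\dot{\zeta} = - \nabla_{z} H$ then yield respectively the state part (which recovers \eqref{eq:discdyn}) and the adjoint part of \eqref{eq:PMPdiscrete}, while Lipschitz regularity of $(y^*,q^*,x^*,p^*)$ on $[0,T]$ is part of the conclusion of the cited theorem. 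Since Problem \ref{problemODE} has free terminal state and no terminal cost, the transversality condition reduces to $(q^*(T),p^*(T)) = 0$, and the initial conditions $(y^*(0),x^*(0)) = (y^0,x^0)$ are those of the reference trajectory. Finally, the pointwise maximum condition $u^*_N(t) \in \arg\max_{u \in \cal{U}} \H_N(y^*(t),q^*(t),x^*(t),p^*(t),u)$ is precisely Clarke's maximum condition; thanks to the strict convexity of $\gamma$ together with the convexity of $\cal{U}$ (see the argument in Remark \ref{r-uniquemax} specialized to $N$ agents), the argmax is a singleton and inclusion may be written as equality.

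The only subtlety I anticipate is ensuring that the PMP is obtained in its \emph{normal} form, i.e., with cost-multiplier $\lambda_0 = 1$, since Clarke's general statement allows for an abnormal multiplier. This is however automatic in the present setting: the terminal state is free, there are no endpoint constraints beyond the prescribed initial datum, and the dynamics and cost are smooth, so the standard non-triviality argument forces $\lambda_0 \neq 0$, which can then be normalized. Once this is dispatched, the remainder is a bookkeeping identification of the partial derivatives of $\H_N$ with the right-hand sides in \eqref{eq:PMPdiscrete}, and the statement follows.
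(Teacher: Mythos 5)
Your proposal is correct and coincides with the paper's treatment: the paper states Theorem \ref{PMPODE} as a direct citation of \cite[Theorem 22.2]{clarke2013functional} applied to the stacked ODE system, with the normality issue ($\lambda_0=1$) dispatched exactly as you do, via the free terminal state (see the remark following the theorem, which invokes \cite[Corollary 22.3]{clarke2013functional}). The remaining content is the same bookkeeping identification of the pre-Hamiltonian with $\H_N$ and of the canonical/transversality/maximum conditions with \eqref{eq:PMPdiscrete} that you describe.
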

\end{framed}

\begin{remark} The general statement of the PMP contains both normal and abnormal minimizers. In our case, the simpler formulation of the PMP is given by the fact that we have normal minimizers only. This is a consequence of the fact that the final configuration is free, see e.g. \cite[Corollary 22.3]{clarke2013functional}.
\end{remark}

\begin{remark} \label{r-u}
The uniqueness of the maximizer of $\H_N$ follows from the same motivations reported in Remark \ref{r-uniquemax}. Indeed, the form of the Hamiltonian implies that for each $u^*\in \U$ it holds
\begin{align*}
u^* = \arg \max_{u \in \mathcal{U}} \H_N(y^*, q^*, x^*, p^*, u) \text{~~~~ when ~~~~} u^* = \arg \max_{u \in \mathcal{U}}\Pt{ \sum_{k=1}^m q^*_k \cdot B_k u - \gamma(u)}.
\end{align*}
In other terms, since the control acts on the $y$ variables only, then we have a simpler formulation for the maximization of the Hamiltonian $\H_N$.
\end{remark}

We now want to embed solutions of the PMP for Problem \ref{problemODE} as solutions of the extended PMP for Problem \ref{problemPDE}. As a first step, we prove that pairs control-trajectories $(u^*_N,(y^*_N,q^*_N,x^*_N,p^*_N))$ satisfying system \eqref{eq:PMPdiscrete} have support uniformly bounded in time and in $N \in \N$.

\begin{proposition}\label{p-boundedsupp}
Let $y^0\in \R^{dm}$, $\mu^0\in \Pc(\R^{d})$, and $\mu_N^0$ be as in Definition \ref{d-mfoc}--\ref{uno}. Let $u^*_N$ be a solution of Problem \ref{problemODE} with initial datum $(y^0,\mu^0_N)$, and let $(u^*_N,(y^*_N,q^*_N,x^*_N,p^*_N))$ be a pair control-trajectory satisfying the PMP for Problem \ref{problemODE} with initial datum $(y^0,\mu^0_N)$ and control $u^*_N$ given by Theorem \ref{PMPODE}.

Then the trajectories $(y^*_N(\cdot),q^*_N(\cdot),\nu^*_N(\cdot))$, where $\nu^*_N:= \Pi_N(x^*_N, p^*_N)$, are equibounded and equi-Lipschitz continuous from $[0,T]$ to $\Y$, where the space $\Y:=\R^{2dm}\times\PP(\R^{2d})$ is endowed with the distance
\bqn
\|(y,q,\nu)-(y',q',\nu')\|_{\Y}=\|y-y'\| +\|q-q'\|+\W_1(\nu,\nu').
\eqnl{e-Y}
Furthermore, there exists $R_T > 0$, depending only on $y^0, \supp(\mu^0), d, K, g, f_k, B_k, \mathcal{U},$ and $T$, such that $\supp(\nu^*_N(\cdot)) \subseteq B(0,R_T)$ for all $N \in \N$.
In particular, it holds $\H(y^*_N, q^*_N, \nu^*_N, u^*_N) = \H_c(y^*_N, q^*_N, \nu^*_N, u^*_N)$.
%and that, by Proposition \ref{p-embed}, $(y^*_N, q^*_N, \nu^*_N, u^*_N)$ satisfies \eqref{e-PMP} for the initial data $(y^0_N, \mu^0_N)$.
\end{proposition}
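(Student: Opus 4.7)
The plan is to combine the a priori bounds on the state variables provided by Proposition \ref{p-esistenza} with a backward Gronwall argument applied to the adjoint equations in \eqref{eq:PMPdiscrete}. The key observation is that the rescaling $r^*_{i,N}:=Np^*_{i,N}$ built into the map $\Pi_N$ of \eqref{e-PiNint} is precisely what turns the adjoint system into one whose coefficients are uniformly bounded in $N$. As a first step, Proposition \ref{p-esistenza} furnishes a radius $\rho_T>0$, depending only on the data allowed in the statement, such that $\|y^*_{k,N}(t)\|\le\rho_T$ and $\|x^*_{i,N}(t)\|\le\rho_T$ for every $k,i,t,N$. On the compact sets $B(0,\rho_T)\subset\R^{dm}$ and $B(0,\rho_T)\subset\R^d$, Hypotheses (H) ensure that $K,g,f_k,\ell,\omega$ and their first derivatives are uniformly bounded; in particular $\|\dot y^*_{k,N}\|$ and $\|\dot x^*_{i,N}\|$ admit an $N$-independent upper bound $M$.

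Next, I would differentiate $\H_N$ in \eqref{e-u} and collect the four kinds of contribution to $\nabla_{x_i}\H_N$: a local term $p_i\cdot\bigl[\frac{1}{N}\sum_j\Jac K(x_i-x_j)+\Jac_x g(y)(x_i)\bigr]$ of order $\|p_i\|$; a non-local term $-\frac{1}{N}\sum_{i'}p_{i'}\Jac K(x_{i'}-x_i)$ of order $\frac{1}{N}\sum_{i'}\|p_{i'}\|$; a coupling term $-\frac{1}{N}\sum_k q_k\,\Jac K(y_k-x_i)$ of order $\frac{1}{N}\sum_k\|q_k\|$; and $-\nabla_{x_i}L(y,\mu_N)$, which is of order $1/N$ because of the explicit $\frac{1}{N}$ factors in $L(y,\mu_N)=\frac{1}{N}\sum_i\ell(y,x_i,\frac{1}{N}\sum_j\omega(x_j))$. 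Multiplying by $N$ so as to obtain $\dot{r}^*_{i,N}=-N\,\nabla_{x_i}\H_N$, the estimates aggregate into
\begin{equation*}
\|\dot{r}^*_{i,N}(t)\|\le C_1\|r^*_{i,N}(t)\|+\frac{C_2}{N}\sum_{i'}\|r^*_{i',N}(t)\|+C_3\sum_k\|q^*_{k,N}(t)\|+C_4,
\end{equation*}
with $C_1,\ldots,C_4$ independent of $N$. An analogous but simpler computation for $\dot{q}^*_{k,N}=-\nabla_{y_k}\H_N$, after rewriting $\sum_{i'}p_{i'}\cdot\Jac_{y_k}g(y)(x_{i'})=\frac{1}{N}\sum_{i'}r^*_{i',N}\cdot\Jac_{y_k}g(y)(x_{i'})$, yields a symmetric bound for $\|\dot{q}^*_{k,N}\|$.

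Setting $a_N(t):=\max\{\max_k\|q^*_{k,N}(t)\|,\max_i\|r^*_{i,N}(t)\|\}$, an application of Stampacchia's lemma as in the proof of Proposition \ref{p-esistenza} yields $|\dot a_N(t)|\le C\,a_N(t)+C'$ for constants $C,C'$ independent of $N$. The terminal conditions $q^*_N(T)=0$ and $p^*_N(T)=0$ imply $a_N(T)=0$, so integrating backwards from $T$ via Gronwall's inequality gives $a_N(t)\le R_T$ for a constant $R_T>0$ depending only on $y^0,\supp(\mu^0),d,K,g,f_k,B_k,\mathcal U,T$. This proves $\supp(\nu^*_N(t))\subseteq B(0,R_T)$ for every $N$ and $t$, and the identity $\H_c=\H$ along the optimal trajectory then follows directly from the definition of $\H_c$. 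Equi-Lipschitz continuity of $y^*_N$ and $q^*_N$ is immediate from the uniform bounds on $\dot{y}^*_{k,N}$ and $\dot{q}^*_{k,N}$; for the measure component, the admissible transport plan $\frac{1}{N}\sum_i\delta_{((x^*_{i,N}(t_1),r^*_{i,N}(t_1)),(x^*_{i,N}(t_2),r^*_{i,N}(t_2)))}$ gives
\begin{equation*}
\W_1(\nu^*_N(t_1),\nu^*_N(t_2))\le\frac{1}{N}\sum_{i=1}^N\bigl\|(x^*_{i,N},r^*_{i,N})(t_1)-(x^*_{i,N},r^*_{i,N})(t_2)\bigr\|\le C|t_1-t_2|,
\end{equation*}
by the uniform bounds on $\dot{x}^*_{i,N}$ and $\dot{r}^*_{i,N}$.

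The main obstacle is the careful bookkeeping of the $1/N$ scalings in the adjoint equations. Without the rescaling $r_i=Np_i$, the non-local term $\frac{1}{N}\sum_{i'}p_{i'}\Jac K(x_{i'}-x_i)$ does not close into a linear system with $N$-uniform coefficients, and only the observation that $\nabla_{x_i}L(y,\mu_N)$ carries an intrinsic factor $1/N$ prevents the emergence of $N$-diverging forcing after rescaling. Once these scalings are properly tracked, the remaining arguments reduce to a Gronwall-type inequality and a standard empirical-measure estimate for the Wasserstein distance.
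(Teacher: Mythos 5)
Your proof is correct and follows essentially the same route as the paper: a priori state bounds from Proposition \ref{p-esistenza}, an explicit computation of the rescaled adjoint equations with careful tracking of the $1/N$ factors (including the observation that $\nabla_{x_i}L(y,\mu_N)$ carries an intrinsic $1/N$), a backward Gronwall argument from the zero terminal conditions, and the diagonal coupling for the equi-Lipschitz estimate on $\nu^*_N$ in $\W_1$. The only (harmless) difference is that you run Gronwall on the maximum $a_N$ of the rescaled adjoint norms via Stampacchia's lemma, whereas the paper uses $\eps_N=\sum_k\|q^*_{k,N}\|+\frac1N\sum_i\|r^*_{i,N}\|$ and then recovers the individual bounds on each $\|r^*_{i,N}\|$ from \eqref{erre}; your choice in fact delivers the pointwise support bound slightly more directly.
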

\begin{proof}
%Fix $y^0\in \R^{dm}$, and a measure $\mu^0\in\PP(\R^d)$ with compact support in $B(0,R)$. Choose a sequence $\mu_N^0$ of empirical measures with support in $B(0,R)$ such that $\lim_{N\to\infty}\W_1(\mu_N^0,\mu^0)=0$. For each couple $(y^0,\mu^0_N)$, by Theorem \ref{PMPODE}, there exists a  pair control-trajectory $(u^*_N,(y^*_N,q^*_N,x^*_N,p^*_N))$ satisfying the PMP for Problem \ref{problemODE}. Set $\nu^*_N:= \Pi_N(x^*_N, p^*_N)$ and $r^*_N:=Np^*_N$. We now prove that the trajectories $(y^*_N,q^*_N,\nu^*_N)$ are equibounded and equi-Lipschitz from $[0,T]$ to $\Y$, where the space $\Y:=\R^{2dm}\times\PP(\R^{2d})$ is endowed with the distance
%\bqn
%\|(y,q,\nu)-(y',q',\nu')\|_{\Y}=\|y-y'\| +\|q-q'\|+\W_1(\nu,\nu').
%\eqnl{e-Y}
%We shall also prove that the $\nu^*_N$ have equibounded support.
As a first step, notice that the pair $(y^*_N, x^*_N)$ solves the system \eqref{eq:discdyn}. It then follows from \eqref{discgronwall} and \eqref{discgronwalldot} that there exist two constants $\rho_T$ and $\rho'_T$, not depending on $N$ such that, for all $i=1,\dots,N$, for all $k=1,\dots, m$, and a.e.\ $t\in [0,T]$ we have
\begin{align}
\|y^*_{k,N}(t)\| \le \rho_T\,,\quad \|x^*_{i,N}(t)\| \le \rho_T \label{equibound}
\\
\|\dot y^*_{k,N}(t)\| \le \rho'_T\,,\quad \|\dot x^*_{i,N}(t)\| \le \rho'_T\,. \label{equilip}
\end{align}
It follows in particular that there exists a uniform constant $W_T$ such that
\begin{align*}
\left\|\frac1N\sum_{i=1}^N \omega(x^*_{i,N}(t))\right\| \le W_T
\end{align*}
for all $t\in [0,T]$.

We now observe that by an explicit computation
\begin{align}
\begin{split}\label{eq:nablaH}
N\nabla_{x_i}\H_N(y^*_N,q^*_N,x^*_N,p^*_N,u^*_N)\cdot e_l=&N \frac{r^*_{i,N}}{N} \cdot \frac{1}{N}\sum_{j=1}^N\Jac K(x^*_{i,N}-x^*_{j,N}) e_l-N\sum_{j=1}^N \frac{r^*_{j,N}}N \cdot \frac{1}{N}\Jac K(x^*_{j,N}-x^*_{i,N}) e_l\\
&+N\frac{r^*_{i,N}}{N}\cdot\nabla g(y^*_N)(x^*_{i,N}) -N\sum_{k=1}^mq^*_{k,N} \cdot \frac{1}{N} \Jac K(y^*_{k,N}-x^*_{i,N}) e_l\\
& -N\frac{1}{N}\Bigg(\nabla_{\xi}\ell\Pt{y^*_N,x^*_{i,N},\mbox{$\frac1N\sum_{j=1}^N\omega(x^*_{j,N})$}}\cdot e_l\\
&+\nabla_{\varsigma}\ell\Pt{y^*_N,x^*_{i,N},\mbox{$\frac1N\sum_{j=1}^N\omega(x^*_{j,N})$}} \Jac \omega(x^*_{i,N})\cdot e_l\Bigg)\\
=&\frac{1}{N}\sum_{j=1}^N (r^*_{i,N}-r^*_{j,N})\cdot\left(\Jac K(x^*_{i,N}-x^*_{j,N}) e_l\right)+r^*_{i,N}\cdot\Jac_x g(y^*_N)(x^*_{i,N}) \\
&-\sum_{k=1}^mq^*_{k,N}\cdot(\Jac K(y^*_{k,N}-x^*_{i,N}) e_l)-\nabla_{\xi}\ell\Pt{y^*_N,x^*_{i,N},\mbox{$\frac1N\sum_{j=1}^N\omega(x^*_{j,N})$}}\cdot e_l\\
&-\Pt{\nabla_{\varsigma}\ell\Pt{y^*_N,x^*_{i,N},\mbox{$\frac1N\sum_{j=1}^N\omega(x^*_{j,N})$}}\Jac\omega(x^*_{i,N})}\cdot e_l,
\end{split}
\end{align}
for each $i=1,\dots, N$ and each $l=1,\dots, d$ (where we have used that $\Jac K$ is even and we merged the first two terms). Therefore, since $\dot p^*_{i,N}$ solves \eqref{eq:PMPdiscrete} and $\dot r^*_{i,N} = N \dot p^*_{i,N}$, we get
\begin{align*}
-\dot r^*_{i,N}(t)\cdot e_l=&\frac{1}{N}\sum_{j=1}^N(r^*_{i,N}(t)-r^*_{j,N}(t))\cdot(\Jac K(x^*_{i,N}(t)-x^*_{j,N}(t)) e_l)\\
&+r^*_{i,N}(t)\cdot (\Jac_y g(y^*_{N}(t))(x^*_{i,N}(t)) e_l)- \sum_{k=1}^m q^*_{k,N}(t) \cdot (\Jac K(y^*_{k,N}(t)-x^*_{i,N}(t)) e_l)\\
&-\nabla_{\xi}\ell\Pt{y^*_{N}(t),x^*_{i,N}(t),\mbox{$\frac1N\sum_{j=1}^N\omega(x^*_{j,N}(t))$}}\cdot e_l\\
&-\left(\nabla_{\varsigma}\ell\Pt{y^*_{N}(t),x^*_{i,N}(t),\mbox{$\frac1N\sum_{j=1}^N\omega(x^*_{j,N}(t))$}}\Jac\omega(x^*_{i,N}(t))\right)\cdot e_l,
\end{align*}
for each $i=1,\dots, N$, each $l=1,\dots, d$, and a.e.\ $t\in [0,T]$, where we have used the fact that $\Jac K$ is even.
We now denote with $L_T$ a uniform constant such that
\begin{eqnarray*}
&\displaystyle
\|\Jac K\|_{L^\infty(B(0,\rho_T),\R^{d \times d})}\le L_T\,,\quad \sup_{|y|\le\sqrt{m}R_T}\|\Jac_y g(y)(\cdot)\|_{L^\infty(B(0,\rho_T),\R^{d \times d})}\le L_T\\
&\displaystyle
\|\nabla_{\xi}\ell\|_{L^\infty(B(0,\sqrt{m}\rho_T)\times B(0,\rho_T)\times B(0,W_T),\R^d)}\le L_T\,,\quad\|\nabla_{\varsigma}\ell\|_{L^\infty(B(0,\sqrt{m}\rho_T)\times B(0,\rho_T)\times B(0,W_T),\R^d)}\le L_T\,,\\
&\displaystyle
\|\Jac\omega\|_{L^\infty(B(0,\rho_T),\R^{d \times d})}\le L_T\,,
\end{eqnarray*}
and we easily get the estimate
\begin{equation}\label{erre}
\|\dot r^*_{i,N}(t)\|\le \sqrt{d}L_T\left(2\|r^*_{i,N}(t)\|+\frac{1}{N}\sum_{j=1}^N\|r^*_{j,N}(t)\|+\sum_{k=1}^m\|q^*_{k,N}(t)\|+1+L_T\right)
\end{equation}
for each $i=1,\dots, N$ and a.e.\ $t\in [0,T]$. An explicit computation of $\nabla_{y_k}\H_N$ and a similar argument, possibly with another constant $L_T$, show the estimate
\begin{equation}\label{qu}
\|\dot q^*_{k,N}(t)\|\le \sqrt{d}L_T\left(\frac{1}{N}\sum_{i=1}^N\|r^*_{i,N}(t)\|+2\|q^*_{k,N}(t)\|+L_T\right)
\end{equation}
for each $k=1,\dots, m$ and a.e.\ $t\in [0,T]$. 
We now set
$$
\eps_N(t):=\sum_{k=1}^m \|q^*_{k,N}(t)\|+\frac1N\sum_{i=1}^N \|r^*_{i,N}(t)\|\,,
$$
and observe that it holds
$$
|\dot \eps_N(t)|\le \sum_{k=1}^m \|\dot q^*_{k,N}(t)\|+\frac1N\sum_{i=1}^N \|\dot r^*_{i,N}(t)\|\,,
$$
therefore \eqref{erre} and \eqref{qu} yield
\begin{equation}\label{eps}
|\dot \eps_N(t)|\le \sqrt{d}L_T\left(4\eps_N(t)+1+2L_T\right)\,.
\end{equation}
Defining then the increasing functions $\eta_N(t)$ through $\eta_N(t):=\sup_{\tau\in[0,t]}\eps_N(T-\tau)$, and observing that it holds $\eta_N(0)=0$ for the boundary conditions in Theorem \ref{PMPODE}, from \eqref{eps} and Gronwall's Lemma we obtain
$$
\eta_N(\tau)\le\sqrt{d}L_T\tau(1+2L_T)e^{(4\sqrt{d}L_T)\tau}
$$
and with this
\begin{equation}\label{eps-equibound}
\eps_N(t)\le \eta_N(T)\le\sqrt{d}L_TT(1+2L_T)e^{(4\sqrt{d}L_T)T}:=C_T
\end{equation}
for all $t\in [0,T]$.
Plugging into \eqref{eps}, we get the existence of a constant $C'_T$ such that
\begin{equation}\label{eps-equilip}
|\dot \eps_N(t)|\le C'_T
\end{equation}
for a.e.\ $t\in [0,T]$. Since by definition of $\nu^*_N(t)$ and standard properties of the Wasserstein distance $\mathcal W_1$ it holds
$$
\W_1(\nu^*_N(t+\tau),\nu^*_N(t))\le\sqrt{2}\left(\frac1N \sum_{i=1}^N \|x^*_{i,N}(t+\tau)-x^*_{i,N}(t)\|+\frac1N\sum_{i=1}^N \|r^*_{i,N}(t+\tau)-r^*_{i, N}(t))\|\right)\,,
$$
from the previous inequality, \eqref{equibound}, \eqref{equilip}, \eqref{eps}, \eqref{eps-equibound}, and \eqref{eps-equilip} we obtain that $y^*_N(t)$ and $q^*_N(t)$ are equibounded, that there exist a constant, denoted by $R_T$, such that ${\rm supp}(\nu^*_N(t))\subset B(0, R_T)$ for all $t\in [0,T]$ and that $(y^*_N,q^*_N,\nu^*_N)$ are equi-Lipschitz continuous from $[0,T]$ with values in $\Y$.
\end{proof}

\bp
\label{p-embed} Let $N \in \N$ and $u^*_N\in L^p([0,T];\U)$ be an optimal control for Problem \ref{problemODE} with given initial datum $(y^0_N, x^0_N)\in\R^{dm+dN}$, and $(y^*_N(\cdot), q^*_N(\cdot), x^*_N(\cdot), p^*_N(\cdot))\in \mathrm{Lip}([0,T],\R^{2dm+2dN})$ a corresponding trajectory of the PMP with maximized Hamiltonian $\H_N$.

Define $\nu_N^*:=\Pi_N(x^*_{1,N},p^*_{1,N}, \ldots,x^*_{N,N},p^*_{N,N})$ with $\Pi_N$ as in \r{e-PiNint}, and assume that $\supp(\nu^*_N(\cdot)) \subseteq B(0,R_T)$. Then, the control $u^*_N$ is optimal for Problem \ref{problemPDE} and $(y^*_N,q^*_N,\nu^*_N,u^*_N)$ satisfies the extended Pontryagin Maximum Principle.
\ep
\bproof First observe that, by Proposition \ref{p-boundedsupp}, $\H_c(y^*_N,q^*_N,\nu^*_N,u^*_N) = \H(y^*_N,q^*_N,\nu^*_N,u^*_N)$ and that for every $t \in [0,T]$
$$u^*_N(t)= \arg\max_{u \in \mathcal{U}} \H_N(y^*_N(t),q^*_N(t),x_N^*(t),p_N^*(t),u)\iff u^*_N(t)= \arg\max_{u \in \mathcal{U}} \H(y^*_N(t),q^*_N(t),\nu^*_N(t),u),$$ due to the specific form of the Hamiltonian $\H_N$ and $\H$, see Remark \ref{r-u}.

We now prove that
\begin{align*}
\begin{cases}
\dot y^*_{k,N} &=\nabla_{q_k}\H_N(y^*_N,q^*_N,x^*_N,p^*_N,u^*_N), \\
\dot q^*_{k,N} &=-\nabla_{y_k} \H_N(y^*_N,q^*_N,x^*_N,p^*_N,u^*_N),
\end{cases} \quad \Longrightarrow \quad
\begin{cases}
\dot y^*_{k,N} &=\nabla_{q_k}\H(y^*_N,q^*_N,\nu^*_N,u^*_N), \\
\dot q^*_{k,N} &=-\nabla_{y_k} \H(y^*_N,q^*_N,\nu^*_N,u^*_N),
\end{cases}
\end{align*}
i.e., that if the $(y,q)$ variables satisfy the PMP for Problem \ref{problemODE} then they satisfy the extended PMP for Problem \ref{problemPDE}. It is sufficient to observe that $\H_N$ can be rewritten in terms of $\nu^*_N(\cdot)$ as follows
\begin{align*}
\begin{split}
\H_N(y^*_N,q^*_N,\nu^*_N,u^*_N)=&\int_{\R^{4d}} r \cdot (K\star \pi_{1\#}\nu^*_N)(x)\,d\nu^*_N(x,r)+\int_{\R^{2d}} r \cdot g(y^*_N)(x)\,d\nu^*_N(x,r)\\
& +\sum_{k=1}^m\int_{\R^{2d}} q^*_{k,N} \cdot K(y^*_{k,N}-x)\,d\nu^*_N(x,r)+\sum_{k=1}^m q^*_{k,N} \cdot (f_k(y^*_{k,N})+B_k u^*_N)\\
& -L(y^*_N,\pi_{1\#}\nu^*_N)-\gamma(u^*_N),
\end{split}
\end{align*}
where we used the variable $r=Np$. Comparing it with $\H(y^*_N,q^*_N,\nu^*_N,u^*_N)$, one has that their expressions coincide up to the first term. %Since we just proved that the control $u^*$ maximizes both $\H$ and $\H_N$, then the maximized Hamiltonians $\H$ and $\H_N$ coincide up to the first term too.
Since such first term is independent on $y_k$ and $q_k$, then $\nabla_{y_k} \H_N=\nabla_{y_k} \H$ and $\nabla_{q_k} \H_N=\nabla_{q_k} \H$, hence equations for $\dot y^*_{k,N},\dot q^*_{k,N}$ in the PMP for Problem \ref{problemODE} and in the extended PMP for Problem \ref{problemPDE} coincide.

We now prove a similar result for the $(x^*_{i,N},r^*_{i,N})$ variables, with $r^*_{i,N}=N p_{i,N}^*$. After this change of variable, the third and the fourth equation in \eqref{eq:PMPdiscrete} become
$$
\begin{cases}
\dot{x}^*_{i, N} & = N\nabla_{r_i} \mathbb{H}_N(y^*_N, q^*_N, x^*_N, p^*_N,u^*_N) \\
\dot{r}^*_{i, N} & = - N\nabla_{x_i} \mathbb{H}_N(y^*_N, q^*_N, x^*_N, p^*_N,u^*_N)\,.
\end{cases}
$$
We want to prove that the following identity holds
\bqn
J(\nabla_\nu \H_c(y^*_N,q^*_N,\nu^*_N,u^*_N))(x^*_{i,N},r^*_{i,N})=\Pt{\ba{c}N\nabla_{r_i} \H_N(y^*_N,q^*_N,x^*_N,p^*_N,u^*_N)\\-N\nabla_{x_i}\H_N(y^*_N,q^*_N,x^*_N,p^*_N,u^*_N)\ea},
\eqnl{e-uguale}
i.e., that the Hamiltonian vector fields generated by $\H$ and $\H_N$ coincide in each point $(x^*_{i,N},r^*_{i,N})$. The presence of the constant $N$ in the right-hand side is due to the change of variables $r^*_{i,N}=N p_{i,N}^*$. By applying $J^{-1}$ on both sides of \r{e-uguale}, we need to prove
\bqn
\!\!\!\!\!\!\!\!\!\!\!\nabla_\nu \H_c(y^*_N,q^*_N,\nu^*_N,u^*_N)(x^*_{i,N},r^*_{i,N})\cdot e_l &\!\!\!\!\!\!\!\!\!=N\nabla_{x_i}\H_N(y^*_N,q^*_N,x^*_N,p^*_N,u^*_N)\cdot e_l &\mbox{ for $l=1,\ldots,d,$~~~}\label{e-embed}\\
\!\!\!\!\!\!\!\!\!\!\!\nabla_\nu \H_c(y^*_N,q^*_N,\nu^*_N,u^*_N)(x^*_{i,N},r^*_{i,N})\cdot e_l &\!\!\!\!=N\nabla_{r_i}\H_N(y^*_N,q^*_N,x^*_N,p^*_N,u^*_N)\cdot e_{l-d} &\mbox{~for $l=d+1,\ldots,2d.$~~~}\label{e-embed2}
\eqn
By writing explicitly the left hand sides of \r{e-embed} and \r{e-embed2} by using the expressions \r{e-gradx}-\r{e-gradr} and evaluating them in $(x^*_{i,N},r^*_{i,N})$, we have
\begin{align*}
\begin{split}
\nabla_\nu \H_c(y^*_N,q^*_N,\nu^*_N,u^*_N)(x^*_{i,N},r^*_{i,N})\cdot e_l&=\frac{1}{N}\sum_{j=1}^N (r^*_{i,N}-r^*_{j,N}) \cdot \left(\Jac K(x^*_{i,N}-x^*_{j,N}) e_l\right)+ r^*_{i,N} \cdot \left(\Jac_x g(y^*_N)(x^*_{i,N}) e_l\right)\\
&-\sum_{k=1}^m q^*_{k,N} \cdot \left(\Jac K(y^*_{k,N}-x^*_{i,N}) e_l\right)-\nabla_{\xi}\ell\Pt{y^*_N,x^*_{i,N},\mbox{$\frac1N\sum_{j=1}^N\omega(x^*_{j,N})$}}\cdot e_l\\
&-\left(\nabla_{\varsigma}\ell\Pt{y^*_N,x^*_{i,N},\mbox{$\frac1N\sum_{j=1}^N\omega(x^*_{j,N})$}}\Jac\omega(x^*_{i,N})\right)\cdot e_l,
\end{split}
\end{align*}
for $l=1,\ldots,d$, so that \eqref{e-embed} follows immediately from \eqref{eq:nablaH}. Similarly, we have
\bqn
\nabla_\nu \H_c(y^*_N,q^*_N,\nu^*_N,u^*_N)(x^*_{i,N},r^*_{i,N})\cdot e_l&=&\frac{1}{N}\sum_{j=1}^N K(x^*_{i,N}-x^*_{j,N})\cdot e_{l-d} +g(y^*_N)(x^*_{i,N})\cdot e_{l-d}
\eqnn
for $l=d+1,\ldots,2d$, which coincides with the right hand side of \r{e-embed2} by an explicit computation.
%The right hand side of \r{e-embed2} is 
%\bqn
%N\nabla_{r_i}\H_N(y^*_N,q^*_N,x^*_N,p^*_N,u)\cdot e_l&=&\frac{1}{N}\sum_{j=1}^N K(x^*_{i,N}-x^*_{j,N})\cdot e_{l-d}+ g(y^*_N)(x^*_{i,N})\cdot e_{l-d}.
%\eqnn
Since the boundary conditions of Problem \ref{problemODE} and Problem \ref{problemPDE} coincide too, after the identification $\nu_N^*:=\Pi_N(x^*_{1,N},p^*_{1,N}, \ldots,x^*_{N,N},p^*_{N,N})$, the result follows now by \eqref{e-embed}-\eqref{e-embed2} arguing, for instance, as in \cite[Lemma 4.3]{MFOC}.
\eproof
\brem\label{rem3}
It is interesting to observe that the embedding of a trajectory of the PMP for Problem \ref{problemODE} to the empirical measure formulation depends on the number $N$ of agents, see the definition of $\Pi_N$ in \r{e-PiNint}. This is a consequence of the fact that the Hamiltonian of the PMP for Problem \ref{problemODE} actually depends on the number of agents. Indeed, consider a population composed of a unique agent $(x_1,p_1)$, for which the first term of the Hamiltonian reads as $p_1 \cdot g(y)(x_1)$. Consider now a population composed of two agents $(x_1,p_1,x_2,p_2)$ satisfying $x_1=x_2$ and $p_1=p_2$, for which the first term of the Hamiltonian reads as $2p_1\cdot g(y)(x_1)$.

Clearly, in both cases the empirical measure in the state variables is $\mu_1=\mu_2=\delta(x-x_1)$, while the definition of $\Pi_N$ gives two different empirical measures for the cotangent bundle: $\nu_1=\delta(x-x_1,r-p_1)$ and $\nu_2=\frac{1}{2}\delta(x-x_1,r-2p_1)$. This difference is needed to compensate the dependence of the Hamiltonian of the PMP for Problem \ref{problemODE} on the number $N$ of agents.
\erem

\section{The Wasserstein gradient}
\label{s-wass}

We anticipated in Section \ref{s:intro} that the dynamics of $\nu^*$ in \r{e-PMP} is an Hamiltonian flow in the Wasserstein space of probability measures, in the sense of \cite{ambrosio}. This means that the vector field $\nabla_{\nu} \mathbb{H}_c(\nu^*)$ is an element with minimal norm in the Fr\'echet subdifferential at the point $\nu^*$ of the maximized Hamiltonian $\mathbb{H}_c$ introduced in Theorem \ref{t-main} (we drop for simplicity the $y$, $q$ and $u$ dependency). The proof of this fact shall follow the strategy adopted to obtain analogous results in \cite[Chapter 10]{AGS}, which however cannot be applied verbatim to our case due to the peculiar nature of our operators. In order to use those techniques, we consider our functionals defined on $\mathcal{P}_2(\R^{2d})$ instead than on $\PP(\R^{2d})$. Since we have already proven in Proposition \ref{p-boundedsupp} that, whenever we start from a compactly supported initial datum, the dynamics remains compactly supported uniformly in time, this 
assumption does not alter our conclusions.

We start with some basic definitions and general results on functionals defined on $\mathcal{P}_2(\R^{2d})$: the following one is motivated by Definition 10.3.1 and Remark 10.3.3 in \cite{AGS}.

\bdeff
Let $\psi:\mathcal{P}_2(\R^{2d}) \to (-\infty,+\infty]$ be a proper and lower semicontinuous functional, and let $\nu_0 \in D(\psi)$. We say that $w \in L^2_{\nu_0}(\R^{2d})$ belongs to the {\it(Fr\'{e}chet) subdifferential of $\psi$ at $\nu_0$}, in symbols $w \in \partial \psi(\nu_0)$ if and only if for any $\nu_1 \in \mathcal{P}_2(\R^{2d})$ it holds
%there exists $\rho \in \Gamma_o((Id \times w)_{\#}\nu_0, \nu_1)$ satisfying
\bqn
\psi(\nu_1) - \psi(\nu_0) \geq \inf_{\rho \in \Gamma_o(\nu_0,\nu_1)}\int_{\R^{4d}} w(z_0)\cdot( z_1 - z_0) d\rho(z_0,z_1) + o(\W_2(\nu_1, \nu_0)).
\eqnn
\edeff

%\bp[\cite{AGS}, Theorem 10.3.6] \label{t-lambdaconvex}
%If $\phi:\mathcal{P}_2(\Omega) \to (-\infty,+\infty]$ is a proper, lower semicontinuous and $\lambda$-convex functional, then $w \in \partial \phi(\nu_1)$ if and only if  for any $\nu_2 \in \mathcal{P}_2(\Omega)$ there exists $\rho \in \Gamma_o((Id \times w)_{\#}\nu_1, \nu_2)$ satisfying
%\bqn
%\phi(\nu_2) - \phi(\nu_1) \geq \int_{\Omega^3} \langle z_2, z_3 - z_1\rangle d\rho(z_1,z_2,z_3) + \frac{\lambda}{2} \W^2_2(\nu_1, \nu_2).
%\eqnn
%\ep

\bp[\cite{AGS}, Theorem 10.3.10] \label{t-minnorm}
Fix the functional $\psi:\mathcal{P}_2(\R^{2d}) \to (-\infty,+\infty]$. Then, for every $\nu_0 \in D(\psi)$, the {\it metric slope}
\bqn
|\partial \psi|(\nu_0) = \limsup_{\nu_1 \to \nu_0} \frac{(\psi(\nu_1) - \psi(\nu_0))^+}{\W_2(\nu_1, \nu_0)}
\eqnn
satisfies
\bqn \label{e-estAbNorm}
|\partial \psi|(\nu_0) \leq \|w\|_{L^2_{\nu_0}}
\eqn
for every $w \in \partial \psi(\nu_0)$.
\ep

The following property shall guarantee that the subdifferential of $\H_c$ is nonempty.

\bdeff
A proper, lower semicontinuous functional $\psi: \mathcal{P}_2(\R^n) \to (-\infty,+\infty]$ is {\it semiconvex along geodesics} whenever, for every $\nu_0, \nu_1 \in \mathcal{P}_2(\R^n)$ and $\rho \in \Gamma_o(\nu_0, \nu_1)$ there exists $C \in \R$ for which it holds
\bqn
\psi(((1-s)\pi_1 + s\pi_2)_{\#}\rho) \leq (1-s) \psi(\nu_0) + s \psi(\nu_1) + Cs(1-s) \W^2_2(\nu_0, \nu_1)) \text{ for every } s \in [0,1].
\eqnn
\edeff

In what follows, we shall fix $y, q \in \R^{dm}$ and $u \in L^1([0,T];\U)$ and we write, for the sake of compactness, $\H_c(\nu)$ in place of $\H_c(y,q,\nu,u)$. Moreover,  $\mathcal{K}$ shall denote a convex, compact subset of $\R^{2d}$ and $z = (x,r)$ a variable in $\R^{2d}$.

Whenever $\supp(\nu) \subseteq \overline{B(0,R_T)}$, $\mathbb{H}_c(\nu)$ can be rewritten as
\bqn
\mathbb{H}_c(\nu) = \frac{1}{2}\int_{\R^{4d}} \mathcal{F}(z - z') d \nu(z) \nu(z') + \int_{\R^{d}} \mathcal{G}(z) d \nu(z) - \int_{\R^d} \ell(\pi_1(z), \mathsmaller \int \om \pi_{1\#}\nu) d\nu(z) + Q,
\eqnn
where
\begin{align*}
\mathcal{F}(x,r) &= r \cdot K(x) \\
\mathcal{G}(x,r) &= r \cdot g(y)(x) + \sum^m_{k = 1} q_k \cdot K(y_k - x),
\end{align*}
and $Q$ collects all the remaining terms not depending on $\nu$. Notice that $\mathcal{F}$ is an even function.

In order to prove the semiconvexity of $\H_c$, we shall establish the semiconvexity of the following functionals:
\begin{align*}
\hat{\H}^1_c(\nu) & = \frac{1}{2}\int_{\R^{4d}} \hat{\mathcal{F}}(z - z') d \nu(z) \nu(z') + \int_{\R^{d}} \hat{\mathcal{G}}(z) d \nu(z), \\
\hat{\H}^2_c(\nu) & = \int_{\R^d} \hat{\ell}(z, \mathsmaller \int \hat{\om} \nu) d\nu(z),
\end{align*}
where $\hat{\mathcal{F}}$, $\hat{\mathcal{G}}$, $\hat{\ell}$, and $\hat{\omega}$ are $\mathcal{C}^2$ functions. The desired result will then follow by noticing that $\H_c(\nu) = \hat \H_c^1(\nu) + \hat \H_c^2(\nu)$ for $\hat{\mathcal{F}} = \mathcal{F}$, $\hat{\mathcal{G}} = \mathcal{G}$, $\hat{\ell} = -\ell \circ (\pi_1, \textup{Id})$, $\hat{\omega} = \omega \circ \pi_1$ and $\mathcal{K} = \overline{B(0,R_T)}$.

The following simple property will be needed to prove semiconvexity of the above functionals.

\begin{lemma} \label{r-suppconv}
Let $\nu_0,\nu_1 \in \Pc(\R^{2d})$ with support contained in $\mathcal K$. Let $\rho \in \Gamma(\nu_0,\nu_1)$ and set
\begin{align} \label{e-measconv}
\nu_s = ((1-s)\pi_1 + s\pi_2)_{\#} \rho,
\end{align}
for every $s \in [0,1]$. Then, it holds
$$\supp(\nu_s) \subseteq \mathcal K \quad \text{ for all } s \in [0,1].$$
\end{lemma}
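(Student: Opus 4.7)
The plan is to exploit convexity of $\mathcal K$ together with the push-forward structure of $\nu_s$. The key observation is that a transport plan between measures supported in $\mathcal K$ is itself supported in $\mathcal K \times \mathcal K$, and then the image under the map $(z_0,z_1)\mapsto (1-s)z_0+sz_1$ of any such pair lies in $\mathcal K$ by convexity.

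More concretely, I would first recall the general fact that for any $\rho \in \Gamma(\nu_0,\nu_1)$, one has $\supp(\rho)\subseteq \supp(\nu_0)\times\supp(\nu_1)$. This is standard: if $(z_0,z_1)\notin \supp(\nu_0)\times\supp(\nu_1)$, then there exist open neighborhoods $U_0\ni z_0$ and $U_1\ni z_1$ with (say) $\nu_0(U_0)=0$, and the marginal condition $\rho(U_0\times\R^{2d})=\nu_0(U_0)=0$ forces $\rho(U_0\times U_1)=0$. Thus by hypothesis $\supp(\rho)\subseteq \mathcal K \times \mathcal K$.

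Next, for the push-forward by the continuous map $T_s:\R^{2d}\times\R^{2d}\to\R^{2d}$ defined by $T_s(z_0,z_1):=(1-s)z_0+sz_1$, a standard property gives
\begin{equation*}
\supp(\nu_s)=\supp((T_s)_\#\rho)\subseteq \overline{T_s(\supp(\rho))}.
\end{equation*}
Since $\supp(\rho)\subseteq \mathcal K\times \mathcal K$ and $\mathcal K$ is convex, every point $T_s(z_0,z_1)=(1-s)z_0+sz_1$ with $(z_0,z_1)\in \mathcal K\times\mathcal K$ belongs to $\mathcal K$. Because $\mathcal K$ is compact (in particular closed), this yields $\overline{T_s(\supp(\rho))}\subseteq \mathcal K$, and hence $\supp(\nu_s)\subseteq \mathcal K$ for every $s\in[0,1]$.

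There is no real obstacle here; the statement is a straightforward consequence of the convexity and closedness of $\mathcal K$ combined with the defining property of push-forward measures. The only minor care is to make explicit the inclusion $\supp(\rho)\subseteq \supp(\nu_0)\times\supp(\nu_1)$, which the reader may find standard enough to be left implicit.
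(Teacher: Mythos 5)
Your proof is correct and follows essentially the same route as the paper: both arguments first establish $\supp(\rho)\subseteq\mathcal K\times\mathcal K$ from the marginal conditions and then invoke the convexity (and closedness) of $\mathcal K$ to place the image of the interpolation map inside $\mathcal K$. The only cosmetic difference is that the paper verifies the support inclusion by testing $\nu_s$ against continuous functions vanishing on $\mathcal K$, whereas you use the standard inclusion $\supp\bigl((T_s)_\#\rho\bigr)\subseteq\overline{T_s(\supp(\rho))}$; both are routine.
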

\begin{proof}
We first notice, that for every $\rho \in \Gamma(\nu_0,\nu_1)$ it holds
\begin{align}\label{r-suppK}
\supp(\rho) \subseteq \mathcal K \times \mathcal K\,.
\end{align}
This follows from the equality
\begin{align*}
\R^{4d} \backslash (\mathcal K \times \mathcal K) = (\R^{2d} \times (\R^{2d} \backslash \mathcal K)) \cup ((\R^{2d} \backslash \mathcal K) \times \R^{2d})
\end{align*}
and from the fact that both $\R^{2d} \times (\R^{2d} \backslash \mathcal K))$ and $(\R^{2d} \backslash \mathcal K) \times \R^{2d}$ are $\rho$-null sets by hypothesis. 

To prove the Lemma, it suffices to show that for all $f \in \mathcal{C}(\R^{2d})$ satisfying $f \equiv 0$ on $\mathcal K$ it holds
\begin{align}\label{e-Kdes}
\int_{\R^{2d}} f d\nu_s = 0.
\end{align}
Indeed,
\begin{align*}
\int_{\R^{2d}} f d\nu_s &= \int_{\R^{4d}} f d((1-s)\pi_1 + s\pi_2)_{\#} \rho(z_0,z_1) \\
&=\int_{\R^{4d}} f((1-s)z_0 + s z_1) d \rho(z_0,z_1) \\
&=\int_{\mathcal K \times \mathcal K} f((1-s)z_0 + s z_1) d \rho(z_0,z_1),
\end{align*}
since, by \eqref{r-suppK}, $\supp(\rho) \subseteq \mathcal K \times \mathcal K$. From the convexity of $\mathcal K$ follows that $(1-s)z_0 + sz_1 \in \mathcal K$ for every $s \in [0,1]$, which, together with the assumption $f \equiv 0$ in $\mathcal K$, yield \eqref{e-Kdes}, as desired.
\end{proof}

In what follows, we shall make use of the following, well-known result.

\begin{remark}
Let $\mathcal K$ be a convex, compact subset of $\R^{2d}$ and let $f \in \mathcal{C}^2(\R^{2d};\R)$. Then there exists $C_{\mathcal K,f} \in \R$ depending only on $\mathcal K$ and $f$ such that
\begin{align}\label{e-c2conv}
f((1-s)x_0 + sx_1) \leq (1-s)f(x_0) + s f(x_1) + C_{\mathcal K,f} s(1-s) \|x_0 - x_1\|^2,
\end{align}
for every $x_0,x_1 \in \R^{2d}$ and $s \in [0,1]$.
\end{remark}

We now prove the semiconvexity of $\hat{\H}^1_c$.
\begin{lemma}\label{l-semiconvH1}
Let $\nu_0,\nu_1 \in \Pc(\R^{2d})$ and let $\rho \in \Gamma(\nu_0,\nu_1)$. Then, there exists $C \in \R$ independent of $\nu_0$ and $\nu_1$ for which
\begin{align*}
\hat{\H}^1_c(((1-s)\pi_1 + s\pi_2)_{\#}\rho) \leq (1-s) \hat{\H}^1_c(\nu_0) + s \hat{\H}^1_c(\nu_1) + Cs(1-s) \W^2_2(\nu_0, \nu_1)
\end{align*}
holds for every $s \in [0,1]$.
\end{lemma}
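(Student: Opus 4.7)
The plan is to split $\hat{\H}^1_c$ into its quadratic interaction piece $A(\nu) := \tfrac{1}{2}\int_{\R^{4d}} \hat{\mathcal{F}}(z - z') d\nu(z) d\nu(z')$ and the linear potential piece $B(\nu) := \int_{\R^{2d}} \hat{\mathcal{G}}(z) d\nu(z)$, and to handle each separately by combining the pointwise $\mathcal{C}^2$-semiconvexity inequality \eqref{e-c2conv} with Lemma \ref{r-suppconv}. The latter confines all relevant integration variables to a fixed convex compact set, so the semiconvexity constants in \eqref{e-c2conv} depend only on this compact and on $\|D^2\hat{\mathcal{F}}\|_\infty$, $\|D^2\hat{\mathcal{G}}\|_\infty$ there, hence are independent of $\nu_0,\nu_1$ in the regime of interest (supports contained in the fixed compact $\mathcal K$ inherited from the definition of $\H_c$).

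Concretely, I would first rewrite both pieces using $\nu_s = ((1-s)\pi_1 + s\pi_2)_\# \rho$:
$$B(\nu_s) = \int_{\R^{4d}} \hat{\mathcal{G}}((1-s)z_0 + s z_1)\, d\rho(z_0,z_1),\qquad A(\nu_s) = \tfrac{1}{2}\int_{\R^{8d}} \hat{\mathcal{F}}\bigl((1-s)(z_0 - z_0') + s(z_1 - z_1')\bigr)\, d\rho(z_0,z_1)\,d\rho(z_0',z_1').$$
By Lemma \ref{r-suppconv} (and its proof via \eqref{r-suppK}), $\supp(\rho)\subseteq \mathcal K\times\mathcal K$ and $\supp(\nu_s)\subseteq\mathcal K$ for every $s\in[0,1]$, so \eqref{e-c2conv} yields, for $\rho$-a.e.\ $(z_0,z_1)$ and $\rho\otimes\rho$-a.e.\ $(z_0,z_1,z_0',z_1')$,
$$\hat{\mathcal{G}}((1-s)z_0 + s z_1) \leq (1-s)\hat{\mathcal{G}}(z_0) + s\hat{\mathcal{G}}(z_1) + C_{\mathcal K,\hat{\mathcal{G}}}\, s(1-s)\|z_0 - z_1\|^2,$$
and similarly for $\hat{\mathcal{F}}$ applied to $(1-s)(z_0 - z_0') + s(z_1 - z_1')$, with constant $C_{\mathcal K,\hat{\mathcal{F}}}$.

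I would then integrate these pointwise bounds against $\rho$ (respectively $\rho\otimes\rho$), using that the marginals of $\rho$ are $\nu_0,\nu_1$ to identify $\int\hat{\mathcal{G}}(z_0)\,d\rho = B(\nu_0)$, $\int\hat{\mathcal{G}}(z_1)\,d\rho = B(\nu_1)$, and analogously $\int\int \hat{\mathcal{F}}(z_0 - z_0')\,d\rho\,d\rho = 2A(\nu_0)$, $\int\int \hat{\mathcal{F}}(z_1 - z_1')\,d\rho\,d\rho = 2A(\nu_1)$. For the quadratic remainder I would use $\|(z_0 - z_1) - (z_0' - z_1')\|^2 \leq 2\|z_0 - z_1\|^2 + 2\|z_0' - z_1'\|^2$ together with Fubini to reduce the double integral $\int\int\|(z_0-z_1)-(z_0'-z_1')\|^2\,d\rho\,d\rho$ to $4\int\|z_0-z_1\|^2\,d\rho$. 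Combining the estimates for $A(\nu_s)$ and $B(\nu_s)$ gives
$$\hat{\H}^1_c(\nu_s) \leq (1-s)\hat{\H}^1_c(\nu_0) + s\,\hat{\H}^1_c(\nu_1) + \bigl(C_{\mathcal K,\hat{\mathcal{G}}} + 2C_{\mathcal K,\hat{\mathcal{F}}}\bigr) s(1-s) \int \|z_0 - z_1\|^2\,d\rho(z_0,z_1).$$
Choosing $\rho\in\Gamma_o(\nu_0,\nu_1)$ so that the last integral equals $\W_2^2(\nu_0,\nu_1)$ yields the desired inequality with $C := C_{\mathcal K,\hat{\mathcal{G}}} + 2C_{\mathcal K,\hat{\mathcal{F}}}$.

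The only delicate point is ensuring $C$ is independent of $\nu_0,\nu_1$: this is exactly where confinement to the fixed compact $\mathcal K = \overline{B(0,R_T)}$ (the ambient compact on which $\H_c$ is finite) is used, since only there can the Hessians of $\hat{\mathcal{F}}$ and $\hat{\mathcal{G}}$ — which are globally $\mathcal{C}^2$ but not globally bounded — be controlled uniformly. Everything else is an application of linearity of the push-forward under $((1-s)\pi_1+s\pi_2)$ and of Fubini's theorem.
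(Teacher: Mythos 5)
Your proof is correct and follows essentially the same route as the paper: the paper's own proof consists precisely of the reduction to a fixed convex compact $\mathcal K$ via Lemma \ref{r-suppconv} followed by an appeal to Propositions 9.3.2 and 9.3.5 of Ambrosio--Gigli--Savar\'e, whose content is exactly the splitting into interaction and potential parts and the integration of the pointwise estimate \eqref{e-c2conv} against $\rho$ and $\rho\otimes\rho$ that you carry out explicitly (your bookkeeping of the constants, including the factor $2C_{\mathcal K,\hat{\mathcal F}}$ coming from $\|(z_0-z_1)-(z_0'-z_1')\|^2\le 2\|z_0-z_1\|^2+2\|z_0'-z_1'\|^2$, is right; the only cosmetic point is that the semiconvexity constant for $\hat{\mathcal F}$ should be taken on the compact convex set $\mathcal K-\mathcal K$ rather than $\mathcal K$).

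One caveat deserves mention. Your argument produces the remainder $s(1-s)\int\|z_0-z_1\|^2\,d\rho$, which you convert into $\W^2_2(\nu_0,\nu_1)$ only at the last step by taking $\rho\in\Gamma_o(\nu_0,\nu_1)$, whereas the lemma is stated for an arbitrary $\rho\in\Gamma(\nu_0,\nu_1)$. This is a defect of the statement rather than of your proof: for a non-optimal plan the inequality with $\W^2_2$ on the right-hand side is in general false (take $\nu_0=\nu_1=\tfrac12(\delta_a+\delta_{-a})$, the swap plan, and a strictly concave $\hat{\mathcal{G}}$; then $\W_2(\nu_0,\nu_1)=0$ but $\nu_{1/2}=\delta_0$ violates the inequality). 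The natural general-plan version has $\int\|z_0-z_1\|^2\,d\rho$ in place of $\W^2_2(\nu_0,\nu_1)$, and all downstream uses of the lemma in the paper (Corollary \ref{c-semiconvH2}, Proposition \ref{l-semiconvex}, Theorem \ref{t-wassgrad}) ultimately go through optimal plans, so your restriction to $\Gamma_o$ is the correct reading of what is actually needed.
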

\begin{proof}
We may assume $\supp(\nu_0),\supp(\nu_1) \subseteq \mathcal K$ for some convex and compact set $\mathcal K \subset \R^{2d}$, otherwise the inequality is trivial. Hence, from Lemma \ref{r-suppconv}, it follows $\supp(\nu_s) \subseteq \mathcal K$ for every $s \in [0,1]$. But then, since $\hat{\mathcal{F}}$ and $\hat{\mathcal{G}}$ are both $\mathcal{C}^2$, the result follows as in \cite[Proposition 9.3.2, Proposition 9.3.5]{AGS}.
\end{proof}

\begin{corollary}
Let $\hat{\omega} \in \mathcal{C}^2(\R^{2d};\R^{d})$, $\nu_0,\nu_1 \in \Pc(\R^{2d})$, $\rho \in \Gamma(\nu_0,\nu_1)$ and define $\nu_s$ as in \eqref{e-measconv} for $s \in [0,1]$. If we set
\begin{align}\label{e-intom}
\xi_s = \int_{\R^{2d}} \hat{\omega} d \nu_s,
\end{align}
then
\begin{align*}
\|\xi_s - (1-s)\xi_0 - s\xi_1\| \leq C s (1-s) \W^2_2(\nu_0,\nu_1),
\end{align*}
for all $s\in [0,1]$, where $C$ is independent of $\nu_0$ and $\nu_1$.
\end{corollary}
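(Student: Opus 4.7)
The plan is to unfold the definition of each $\xi_s$ through the push-forward formula and reduce the problem to a pointwise second-order bound on $\hat\omega$, integrated against $\rho$. Since $\nu_0,\nu_1\in\Pc(\R^{2d})$, I first fix a convex compact set $\mathcal K\subset\R^{2d}$ containing $\supp(\nu_0)\cup\supp(\nu_1)$; Lemma~\ref{r-suppconv} (and the argument in its proof) then gives $\supp(\nu_s)\subseteq\mathcal K$ for every $s\in[0,1]$ and $\supp(\rho)\subseteq\mathcal K\times\mathcal K$.

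Using the push-forward formula for $\nu_s = ((1-s)\pi_1+s\pi_2)_\#\rho$ together with $\pi_{1\#}\rho=\nu_0$ and $\pi_{2\#}\rho=\nu_1$, I can rewrite each of $\xi_s,\xi_0,\xi_1$ as an integral over $\R^{4d}$ against $\rho$, obtaining
\begin{equation*}
\xi_s - (1-s)\xi_0 - s\xi_1 = \int_{\R^{4d}}\bigl[\hat\omega((1-s)z_0+sz_1) - (1-s)\hat\omega(z_0) - s\hat\omega(z_1)\bigr]\,d\rho(z_0,z_1).
\end{equation*}
The key step is then to apply inequality~\eqref{e-c2conv} componentwise to $\pm\hat\omega^{(i)}$, $i=1,\ldots,d$, on the compact set $\mathcal K$. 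This yields a constant $C = C_{\mathcal K,\hat\omega}$ with
\begin{equation*}
\|\hat\omega((1-s)z_0+sz_1) - (1-s)\hat\omega(z_0) - s\hat\omega(z_1)\| \leq C\,s(1-s)\|z_0-z_1\|^2 \quad \text{for all } z_0,z_1\in\mathcal K.
\end{equation*}
Plugging this pointwise estimate into the integral and invoking the vector-valued triangle inequality $\|\int\varphi\,d\rho\|\leq\int\|\varphi\|\,d\rho$ gives
\begin{equation*}
\|\xi_s-(1-s)\xi_0-s\xi_1\|\leq C\,s(1-s)\int_{\R^{4d}}\|z_0-z_1\|^2\,d\rho(z_0,z_1).
\end{equation*}
Choosing $\rho\in\Gamma_o(\nu_0,\nu_1)$ — the convention already implicit in Lemma~\ref{l-semiconvH1} — converts the last integral into $\W_2^2(\nu_0,\nu_1)$, yielding the stated inequality.

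The only delicate point is the claimed independence of $C$ from $\nu_0,\nu_1$: this must be read within the running assumption that all relevant supports lie in a single compact set (in the final application, $\mathcal K=\overline{B(0,R_T)}$ supplied by Proposition~\ref{p-boundedsupp}), so that $C_{\mathcal K,\hat\omega}$ is genuinely a universal constant. With that caveat the argument is a straightforward vector-valued extension of the scalar semiconvexity computation already performed in Lemma~\ref{l-semiconvH1}, and I do not anticipate any real obstacle beyond the componentwise application of~\eqref{e-c2conv}.
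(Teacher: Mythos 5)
Your argument is correct and is essentially the paper's proof unfolded: the paper simply applies Lemma \ref{l-semiconvH1} with $\hat{\mathcal{F}}\equiv 0$ and $\hat{\mathcal{G}}\equiv\pm\hat\omega$, which is exactly your two-sided, componentwise application of \eqref{e-c2conv} integrated against $\rho$. Your two caveats are well taken and, if anything, more careful than the paper: the vector-valued $\hat\omega$ does require the componentwise step, and the conversion of $\int\|z_0-z_1\|^2\,d\rho$ into $\W_2^2(\nu_0,\nu_1)$ does require $\rho\in\Gamma_o(\nu_0,\nu_1)$ (for a general plan one only gets the integral itself on the right-hand side), which is consistent with how the estimate is actually invoked downstream in Lemma \ref{l-semiconvl} and Corollary \ref{c-semiconvH2}.
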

\begin{proof}
Follows from Lemma \ref{l-semiconvH1} applied first to the functions $\hat{\mathcal{F}} \equiv 0$ and $\hat{\mathcal{G}} \equiv \hat{\omega}$, and then to $\hat{\mathcal{F}} \equiv 0$ and $\hat{\mathcal{G}} \equiv -\hat{\omega}$.
\end{proof}

The semiconvexity of $\hat{\H}^2_c$ will be deduced as a corollary of the following estimate.

\begin{lemma}\label{l-semiconvl}
Suppose that $\hat{\ell} \in \mathcal{C}^2(\R^{2d}\times\R^d;\R)$, let $z_0,z_1 \in \mathcal K$ and set $z_s = (1-s)z_0 + sz_1$ for all $s \in [0,1]$. Furthermore, let $\nu_0,\nu_1 \in \Pc(\R^{2d})$, $\rho \in \Gamma(\nu_0,\nu_1)$ and define $\nu_s$ and $\xi_s$ as in \eqref{e-measconv} and \eqref{e-intom} for $s \in [0,1]$. Then, for all $s \in [0,1]$, it holds
\begin{align*}
\hat{\ell}(z_s,\xi_s) \leq (1-s) \hat{\ell}(z_0,\xi_0) + s \hat{\ell}(z_1,\xi_1) +  C_{\mathcal K,\hat{\ell},\hat{\omega}} s(1-s) \W^2_2(\nu_0,\nu_1) + C_{\mathcal K,\hat{\ell},\hat{\omega}} s(1-s) \|z_0 - z_1\|^2,
\end{align*}
for some constant $C_{\mathcal K,\hat{\ell},\hat{\omega}}$ depending only on $\mathcal K,\hat{\ell}$ and $\hat{\omega}$.
\end{lemma}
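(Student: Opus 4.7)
The plan is to decompose the left-hand side by inserting the ``linear average'' $(z_s, (1-s)\xi_0 + s\xi_1)$ as an intermediate point and control the two resulting errors separately: one from the $\mathcal{C}^2$ semiconvexity of $\hat{\ell}$ at linearly interpolated arguments (producing the $\|z_0-z_1\|^2$ and the $\|\xi_0-\xi_1\|^2$ contributions), and one from the deviation of $\xi_s$ from that linear interpolant (producing a pure $\W_2^2(\nu_0,\nu_1)$ contribution via the preceding corollary).

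As a preparatory step, exactly as in the proof of Lemma~\ref{l-semiconvH1}, I would assume without loss of generality that $\supp(\nu_0),\supp(\nu_1)\subseteq \mathcal K$, so that by Lemma~\ref{r-suppconv} also $\supp(\nu_s)\subseteq \mathcal K$ for every $s\in[0,1]$. Since $\hat{\omega}\in \mathcal{C}^2(\R^{2d};\R^d)$ is Lipschitz on the compact set $\mathcal K$, the values $\xi_0,\xi_1,\xi_s$ all lie in the compact set $\mathcal K':=\overline{\mathrm{conv}}(\hat{\omega}(\mathcal K))\subset\R^d$. Consequently every evaluation of $\hat{\ell}$ appearing below takes place on the compact set $\mathcal K\times\mathcal K'$, which is what will give uniform $\mathcal{C}^2$ bounds with constants depending only on $\mathcal K,\hat{\ell},\hat{\omega}$. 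Applying the pointwise inequality \eqref{e-c2conv} to $\hat{\ell}$ with endpoints $(z_0,\xi_0)$ and $(z_1,\xi_1)$ yields
$$\hat{\ell}\bigl(z_s,(1-s)\xi_0+s\xi_1\bigr)\leq (1-s)\hat{\ell}(z_0,\xi_0)+s\hat{\ell}(z_1,\xi_1)+C\,s(1-s)\bigl(\|z_0-z_1\|^2+\|\xi_0-\xi_1\|^2\bigr),$$
and the term $\|\xi_0-\xi_1\|^2$ is handled by Kantorovich-Rubinstein duality applied componentwise to $\hat{\omega}$, giving $\|\xi_0-\xi_1\|\leq L_{\hat{\omega}}\W_1(\nu_0,\nu_1)\leq L_{\hat{\omega}}\W_2(\nu_0,\nu_1)$.

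To finish, I would bridge from $(z_s,(1-s)\xi_0+s\xi_1)$ to $(z_s,\xi_s)$ by the Lipschitz continuity of $\hat{\ell}$ in the second variable on $\mathcal K\times\mathcal K'$,
$$\bigl|\hat{\ell}(z_s,\xi_s)-\hat{\ell}\bigl(z_s,(1-s)\xi_0+s\xi_1\bigr)\bigr|\leq L_{\hat{\ell}}\,\|\xi_s-(1-s)\xi_0-s\xi_1\|,$$
and bound the right-hand side, via the corollary preceding the statement, by $L_{\hat{\ell}}\tilde{C}\,s(1-s)\W_2^2(\nu_0,\nu_1)$. Summing the two contributions and absorbing all constants into a single $C_{\mathcal K,\hat{\ell},\hat{\omega}}$ produces the asserted inequality. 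The only real subtlety is bookkeeping: every Lipschitz and every $\mathcal{C}^2$ constant invoked must depend solely on $\mathcal K,\hat{\ell},\hat{\omega}$, which is precisely why the initial confinement of the supports to $\mathcal K$ and the derived confinement of the $\xi$'s to $\mathcal K'$ are both necessary.
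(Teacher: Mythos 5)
Your proposal is correct and follows essentially the same route as the paper's proof: insert the intermediate point $(z_s,(1-s)\xi_0+s\xi_1)$, apply the pointwise $\mathcal{C}^2$ inequality \eqref{e-c2conv} on a compact product set, control $\|\xi_0-\xi_1\|$ by the Lipschitz constant of $\hat\omega$ and $\W_1\le\W_2$, and bound the remaining gap via the preceding corollary together with the Lipschitz continuity of $\hat\ell$. Your explicit confinement of the $\xi$'s to $\mathcal K'=\overline{\mathrm{conv}}(\hat\omega(\mathcal K))$ is a slightly more careful bookkeeping of why the constant depends only on $\mathcal K,\hat\ell,\hat\omega$, but it is not a different argument.
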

\begin{proof}
Since $\mathcal K$ is compact, $z_s \in \mathcal K$ for all $s \in [0,1]$. Moreover, $(1-s)\xi_0 + s\xi_1 \in \mathcal K'$ for all $s \in [0,1]$, for some convex and compact set $\mathcal K' \subset \R^d$. Notice that from \eqref{e-c2conv} follows 
\begin{align}
\begin{split}\label{a1}
\hat{\ell}(z_s,(1-s)\xi_0 + s\xi_1) \leq (1-s) \hat{\ell}(z_0,\xi_0) + s \hat{\ell}(z_1,\xi_1) + C_{\mathcal K,\mathcal K'}s(1-s)\left( \|z_0 - z_1\|^2 + \|\xi_0 - \xi_1\|^2 \right),
\end{split}
\end{align}
and from the definition of $\xi_s$ and Jensen's inequality, we get
\begin{align}\label{a2}
\|\xi_0 - \xi_1\|^2 &\leq \Lip_{\mathcal K}(\omega) \W^2_1(\nu_0,\nu_1) \leq \Lip_{\mathcal K}(\omega) \W^2_2(\nu_0,\nu_1).
\end{align}
Moreover, for every $s\in[0,1]$ it holds
\begin{align}
\begin{split}\label{a3}
\|\hat{\ell}(z_s,\xi_s) - \hat{\ell}(z_s,(1-s)\xi_0 + s\xi_1)\| & \leq \Lip_{\mathcal K \times \mathcal K'} \|\xi_s - (1-s)\xi_0 - s\xi_1\| \\
& \leq \Lip_{\mathcal K \times \mathcal K'} s(1-s) C \W^2_2(\nu_0,\nu_1).
\end{split}
\end{align}
Hence, for every $s\in[0,1]$, using \eqref{a1}, \eqref{a2} and \eqref{a3}, we get
\begin{align*}
\hat{\ell}(z_s,\xi_s) & = \hat{\ell}(z_s,\xi_s) - \hat{\ell}(z_s,(1-s)\xi_0 + s\xi_1) + \hat{\ell}(z_s,(1-s)\xi_0 + s\xi_1) \\
& \leq (1-s) \hat{\ell}(z_0,\xi_0) + s \hat{\ell}(z_1,\xi_1) +  C_{\mathcal K,\hat{\ell},\hat{\omega}} s(1-s) \W^2_2(\nu_0,\nu_1) + C_{\mathcal K,\hat{\ell},\hat{\omega}} s(1-s) \|z_0 - z_1\|^2.
\end{align*}
\end{proof}

\begin{corollary}\label{c-semiconvH2}
Let $\nu_0,\nu_1 \in \Pc(\R^{2d})$ and $\rho \in \Gamma_o(\nu_0,\nu_1)$. Then, there exists $C \in \R$ independent of $\nu_0$ and $\nu_1$ for which
\begin{align*}
\hat{\H}^2_c(((1-s)\pi_1 + s\pi_2)_{\#}\rho) \leq (1-s) \hat{\H}^2_c(\nu_0) + s \hat{\H}^2_c(\nu_1) + Cs(1-s) \W^2_2(\nu_0, \nu_1)
\end{align*}
holds for every $s \in [0,1]$.
\end{corollary}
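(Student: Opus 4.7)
The plan is to derive the corollary as a direct consequence of the pointwise inequality established in Lemma \ref{l-semiconvl}, by integrating it against the optimal plan $\rho$. As in the proof of Lemma \ref{l-semiconvH1}, one may assume without loss of generality that $\supp(\nu_0) \cup \supp(\nu_1) \subseteq \mathcal K$ for some convex, compact set $\mathcal K \subset \R^{2d}$, since otherwise $\W_2(\nu_0,\nu_1) = +\infty$ and the inequality is trivial. By Lemma \ref{r-suppconv}, we then have $\supp(\nu_s) \subseteq \mathcal K$ for every $s \in [0,1]$, so all evaluations of $\hat\ell$ and $\hat\omega$ take place on the relevant compact set.

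Next, I would rewrite $\hat{\H}^2_c(\nu_s)$ using the push-forward structure: setting $z_s := (1-s)z_0 + sz_1$ and $\xi_s := \int_{\R^{2d}} \hat{\om} \, d\nu_s$, one has
\begin{align*}
\hat{\H}^2_c(\nu_s) = \int_{\R^{2d}} \hat{\ell}(z,\xi_s) \, d\nu_s(z) = \int_{\R^{4d}} \hat{\ell}(z_s,\xi_s) \, d\rho(z_0,z_1).
\end{align*}
Applying Lemma \ref{l-semiconvl} pointwise in $(z_0,z_1) \in \mathcal K \times \mathcal K$ gives, for a constant $C_{\mathcal K,\hat\ell,\hat\omega}$ independent of $\nu_0,\nu_1$,
\begin{align*}
\hat{\ell}(z_s,\xi_s) \leq (1-s)\hat{\ell}(z_0,\xi_0) + s\hat{\ell}(z_1,\xi_1) + C_{\mathcal K,\hat\ell,\hat\omega}\, s(1-s)\,\W^2_2(\nu_0,\nu_1) + C_{\mathcal K,\hat\ell,\hat\omega}\, s(1-s)\,\|z_0 - z_1\|^2.
\end{align*}

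Integrating this inequality against $\rho$ and using that $\xi_0, \xi_1$ are constants (independent of $(z_0,z_1)$), together with $\pi_{1\#}\rho = \nu_0$ and $\pi_{2\#}\rho = \nu_1$, the first two terms on the right-hand side yield exactly $(1-s)\hat{\H}^2_c(\nu_0) + s\hat{\H}^2_c(\nu_1)$. Finally, the optimality of $\rho \in \Gamma_o(\nu_0,\nu_1)$ with respect to the quadratic cost gives
\begin{align*}
\int_{\R^{4d}} \|z_0 - z_1\|^2 \, d\rho(z_0,z_1) = \W^2_2(\nu_0,\nu_1),
\end{align*}
so the last term can be absorbed into the $\W^2_2(\nu_0,\nu_1)$ contribution, producing the desired semiconvexity inequality with a constant $C := 2C_{\mathcal K,\hat\ell,\hat\omega}$ independent of $\nu_0$ and $\nu_1$. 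No step is really the main obstacle here since the hard analytic work has already been done in Lemma \ref{l-semiconvl}; the only subtlety is to note that $\xi_0$ and $\xi_1$ are genuine constants that can be pulled out of the integral against $\rho$, while the $\W_2$-optimality of $\rho$ is precisely what is needed to convert the residual $\int\|z_0-z_1\|^2 \, d\rho$ into $\W_2^2(\nu_0,\nu_1)$.
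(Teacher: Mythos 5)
Your proof is correct and follows essentially the same route as the paper: rewrite $\hat{\H}^2_c(\nu_s)$ as an integral of $\hat{\ell}(z_s,\xi_s)$ against $\rho$ over $\mathcal K\times\mathcal K$ (via Lemma \ref{r-suppconv}), apply Lemma \ref{l-semiconvl} pointwise, and use the optimality of $\rho$ to identify $\int\|z_0-z_1\|^2\,d\rho$ with $\W_2^2(\nu_0,\nu_1)$. The only difference is that you spell out the integration step in more detail than the paper does.
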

\begin{proof}
Notice that, by Lemma \ref{r-suppconv}, $\hat{H}^2_c(\nu_s)$ can be rewritten as
\begin{align*}
\hat{\H}^2_c(\nu_s) = \int_{\mathcal{K} \times \mathcal{K}} \hat{\ell}(z_s,\xi_s)d\rho(z_0,z_1),
\end{align*}
Furthermore, since $\rho \in \Gamma_o(\nu_0,\nu_1)$ it holds
\begin{align*}
\int_{\mathcal{K} \times \mathcal{K}} \|z_0 - z_1\|^2 d\rho(z_0,z_1) = \int_{\R^{4d}} \|z_0 - z_1\|^2 d\rho(z_0,z_1) = \W^2_2(\nu_0,\nu_1),
\end{align*}
the thesis follows from Lemma \ref{l-semiconvl}.
\end{proof}

\begin{proposition}\label{l-semiconvex}
The functional $\H_c$ is semiconvex along geodesics.
\end{proposition}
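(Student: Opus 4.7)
The plan is to reduce the statement to a direct combination of Lemma \ref{l-semiconvH1} and Corollary \ref{c-semiconvH2}, after reducing to the case where both endpoint measures are supported in $\mathcal{K}=\overline{B(0,R_T)}$. First I would fix arbitrary $\nu_0,\nu_1\in\mathcal{P}_2(\R^{2d})$ and an optimal plan $\rho\in\Gamma_o(\nu_0,\nu_1)$, and set $\nu_s=((1-s)\pi_1+s\pi_2)_{\#}\rho$.

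Case 1 is the trivial one: if either $\supp(\nu_0)$ or $\supp(\nu_1)$ is not contained in $\mathcal{K}$, then by the very definition of $\H_c$ the right-hand side of the semiconvexity inequality equals $+\infty$, so the bound holds for any constant $C$. In Case 2, both $\nu_0$ and $\nu_1$ have support in $\mathcal{K}$. Since $\mathcal{K}$ is convex and compact, Lemma \ref{r-suppconv} yields $\supp(\nu_s)\subseteq\mathcal{K}$ for every $s\in[0,1]$; in particular $\H_c(\nu_s)=\H(\nu_s)$ is finite for every $s$, and the decomposition announced just before Lemma \ref{l-semiconvH1} applies, namely
\[
\H_c(\nu_s)=\hat{\H}^1_c(\nu_s)+\hat{\H}^2_c(\nu_s),
\]
with the choices $\hat{\mathcal{F}}=\mathcal{F}$, $\hat{\mathcal{G}}=\mathcal{G}$, $\hat{\ell}=-\ell\circ(\pi_1,\mathrm{Id})$, $\hat{\omega}=\omega\circ\pi_1$, all of which belong to the required $\mathcal{C}^2$ classes by Hypotheses (H).

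Next I would invoke Lemma \ref{l-semiconvH1}, which gives a constant $C_1\in\R$ (depending only on $\mathcal{F},\mathcal{G},\mathcal{K}$, and hence independent of $\nu_0,\nu_1$) such that
\[
\hat{\H}^1_c(\nu_s)\le (1-s)\hat{\H}^1_c(\nu_0)+s\hat{\H}^1_c(\nu_1)+C_1 s(1-s)\W_2^2(\nu_0,\nu_1),
\]
for every $s\in[0,1]$, valid for any $\rho\in\Gamma(\nu_0,\nu_1)$ and in particular for the chosen optimal $\rho$. Then I would apply Corollary \ref{c-semiconvH2}, which—crucially using that $\rho$ is optimal, so that $\int\|z_0-z_1\|^2\,d\rho=\W_2^2(\nu_0,\nu_1)$—produces a constant $C_2\in\R$ with the analogous estimate for $\hat{\H}^2_c$. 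Summing the two inequalities and setting $C:=C_1+C_2$ yields exactly the semiconvexity inequality for $\H_c$ along the chosen geodesic, thereby concluding the proof.

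I do not expect a substantial obstacle here: all the quantitative work has been carried out in the preceding lemmas, and the only conceptual point is that the constraint $\supp(\nu)\subseteq\mathcal{K}$ is preserved by displacement interpolation (Lemma \ref{r-suppconv}), which is precisely what allows the $+\infty$ barrier in the definition of $\H_c$ not to break convexity. The minor care needed is to ensure that Corollary \ref{c-semiconvH2} is applied to an \emph{optimal} $\rho$, which matches the definition of semiconvexity along geodesics, and that the constant $C$ is independent of $\nu_0,\nu_1$, as guaranteed by both preceding results.
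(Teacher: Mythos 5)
Your proposal is correct and follows essentially the same route as the paper: decompose $\H_c=\hat{\H}^1_c+\hat{\H}^2_c$ with $\hat{\mathcal{F}}=\mathcal{F}$, $\hat{\mathcal{G}}=\mathcal{G}$, $\hat{\ell}=-\ell\circ(\pi_1,\mathrm{Id})$, $\hat{\omega}=\omega\circ\pi_1$, $\mathcal{K}=\overline{B(0,R_T)}$, and sum the estimates from Lemma \ref{l-semiconvH1} and Corollary \ref{c-semiconvH2}. You merely spell out two points the paper leaves implicit (the trivial case where a support escapes $\mathcal{K}$, and the need to take $\rho$ optimal so as to match Corollary \ref{c-semiconvH2}), both of which are handled correctly.
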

\begin{proof}
Follows directly from Lemma \ref{l-semiconvH1} and Corollary \ref{c-semiconvH2}, by noticing that $\H_c(\nu) = \hat \H_c^1(\nu) + \hat \H_c^2(\nu)$ for $\hat{\mathcal{F}} = \mathcal{F}$, $\hat{\mathcal{G}} = \mathcal{G}$, $\hat{\ell} = -\ell \circ (\pi_1, \textup{Id})$, $\hat{\omega} = \omega \circ \pi_1$ and $\mathcal{K} = \overline{B(0,R_T)}$.
\end{proof}

%\begin{remark}[Semiconvexity of $\H_c$]
%Consider the functional $\mathbb{H}_c(\nu)$, which, whenever $\supp(\nu) \subseteq \overline{B(0,R_T)}$, can be rewritten as
%\bqn
%\mathbb{H}_c(\nu) = \frac{1}{2}\int_{\R^{4d}} \mathcal{F}(z - z') d \nu(z) \nu(z') + \int_{\R^{d}} \mathcal{G}(z) d \nu(z) - \int_{\R^d} \ell(\pi_1(z), \mathsmaller \int \om \pi_{1\#}\nu) d\nu(z) + Q,
%\eqnn
%where
%$$\mathcal{F}(x,r) = \langle r, K(x) \rangle$$
%(notice that $\mathcal{F}$ is an even function),
%$$\mathcal{G}(x,r) = \langle r, g(y)(x) \rangle + \sum^m_{k = 1} \langle q_k, K(y_k - x) \rangle,$$
%and $Q$ collects all the remaining terms not depending on $\nu$. If we define $\mathcal{V}$ as in \r{e-quadMeas} for $V(z,z') = \frac{1}{2}\mathcal{F}(z - z')$ and $U \equiv Id$, then by Proposition \ref{p-oWass} and the semiconvexity of $L$, it follows
%\bqn
%\H_c((1-t)\nu_0 + t\nu_1) &\leq& (1-t)\H_c(\nu_0) + t\H_c(\nu_1) +t(1-t) o(\W_2(\nu_1,\nu_0)) \nonumber \\
%&&- t(1-t)(\mathcal{V}(\nu_0,\nu_0) - \mathcal{V}(\nu_0,\nu_1) - \mathcal{V}(\nu_1,\nu_0) + \mathcal{V}(\nu_1, \nu_1)) \nonumber\\
%&=& (1-t)\H_c(\nu_0) + t\H_c(\nu_1) + t(1-t)o(\W_2(\nu_1,\nu_0)),
%\eqnn
%for any $\nu_0$ and $\nu_1$ with support contained in $\overline{B(0,R_T)}$. This establishes the semiconvexity of $\H_c$.
%\end{remark}

We define the vector field $\nabla_\nu \mathcal{L}:\R^{2d} \rightarrow \R^{2d}$ as
\bqn
\nabla_{\nu} \mathcal{L}(z) = \left[\begin{matrix}
\nabla_{\xi} \ell(y, \pi_1(z), \mathsmaller \int \om \pi_{1\#}\nu) +  \nabla_{\varsigma} \ell(y, \pi_1(z), \mathsmaller \int \om \pi_{1\#}\nu) \Jac \om(\pi_1(z)) \\
0 \\
\end{matrix} \right],
\eqnn
for every $z \in \R^{2d}$. This notation is reminiscent of the fact that this vector field will eventually turn out to be the 2-Wasserstein gradient of the functional $L$, as it will follow from Theorem \ref{t-wassgrad} in the case $\mathcal{F} \equiv \mathcal{G} \equiv 0$. We can thus define our candidate vector field for the Wasserstein gradient $\nabla_{\nu} \H_c(\nu_0)$ in the case that $\supp(\nu_0) \subseteq B(0,R_T)$:
\bqn \label{e-flow}
w := (\nabla \mathcal{F}) \star \nu + \nabla \mathcal{G} - \nabla_{\nu} \mathcal{L}.
\eqn
Notice that, by Hypotheses (H), $w$ is a continuous function in $z$, and hence it is well-defined $\nu$-a.e..
%The next result shows the integrability of our candidate for the Wasserstein gradient $\nabla_{\nu} \H_c(\nu_0)$ in the case that $\nu_0$ has compact support.

\bl
Let $\nu \in \mathcal{P}_c(\R^{2d})$. Then $w$ defined by \eqref{e-flow} belongs to $L^p_{\nu}(\R^{2d})$ for every $p \in [1,+\infty]$, and it satisfies
\begin{align} \label{e-intPart}
\int_{\R^{4d}}  w(z_0)\cdot( z_1 - z_0) d\rho(z_0,z_1) = \!\! \int_{\R^{6d}} \left( \nabla \mathcal{F}(z_0 - z_2) + \nabla \mathcal{G}(z_0) - \nabla_{\nu} \mathcal{L}(z_0)\right)\cdot (z_1 - z_0) d\rho(z_0,z_1) d\nu(z_2)
\end{align}
for every plan $\rho \in \Gamma(\nu,\nu')$ such that $\nu' \in \mathcal{P}_c(\R^{2d})$.
\el
\bproof
Since $w$ is continuous, the fact that $w$ is $L^p_{\nu}$-integrable follows the fact that $\nu$ has compact support. Equation \r{e-intPart} then follows by Fubini-Tonelli and from the fact that $\rho$ is compactly supported too by Remark \ref{r-suppconv}.
\eproof

\begin{theorem}\label{t-wassgrad}
Let $\nu \in \mathcal{P}_2(\R^{2d})$ be such that $\supp(\nu) \subseteq B(0,R_T)$. Then $\nu \in D(|\partial \H_c|)$ if and only if $w$ as in \r{e-flow} belongs to $L^2_{\nu}(\R^{2d})$. In this case, $\|w\|_{L^2_{\nu}} = |\partial \H_c|(\nu)$, i.e., $w$ is an element with minimal norm in $\partial \H_c(\nu)$.
\end{theorem}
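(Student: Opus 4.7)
The plan is to combine a direct Fréchet-subdifferential computation with the $\lambda$-convexity established in Proposition \ref{l-semiconvex} to identify $w$ as the minimum-norm element of $\partial\H_c(\nu)$. First I note that the ``if and only if'' is essentially a one-sided statement: since $\supp(\nu)\subseteq\overline{B(0,R_T)}$ and $w$ is continuous in $z$ (by Hypotheses (H) on $K,g,\ell,\omega$), $w$ is bounded on $\supp(\nu)$, hence $w\in L^\infty_\nu\subseteq L^2_\nu$. The substantive part of the theorem is therefore: $\nu\in D(|\partial\H_c|)$, and $w$ realizes the metric slope as its $L^2_\nu$-norm.

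The main step is to show $w\in\partial\H_c(\nu)$, i.e.\ to verify the Fréchet subdifferential inequality. I would fix $\nu'\in\mathcal{P}_2(\R^{2d})$ close to $\nu$ in $W_2$ and an optimal plan $\rho\in\Gamma_o(\nu,\nu')$, and use the splitting $\H_c=\hat\H_c^1+\hat\H_c^2$ with the identifications $\hat{\mathcal F}=\mathcal F$, $\hat{\mathcal G}=\mathcal G$, $\hat\ell=-\ell\circ(\pi_1,\mathrm{Id})$, $\hat\omega=\omega\circ\pi_1$. For $\hat\H_c^1$, a Taylor expansion of $\hat{\mathcal F}$ in the variable $z-z'$ and of $\hat{\mathcal G}$ in $z$, integrated against $\rho\otimes\rho$ and $\rho$ respectively, combined with the evenness of $\hat{\mathcal F}$ (so that the two symmetric first-order contributions collapse to a single convolution $(\nabla\hat{\mathcal F})\star\nu$), yields
\[
\hat\H_c^1(\nu')-\hat\H_c^1(\nu)=\int_{\R^{4d}}\!\!\bigl[(\nabla\hat{\mathcal F})\!\star\!\nu+\nabla\hat{\mathcal G}\bigr](z_0)\cdot(z_1-z_0)\,d\rho(z_0,z_1)+O(W_2^2(\nu,\nu')).
\]
For $\hat\H_c^2(\nu)=\int\hat\ell(z,\xi(\nu))d\nu(z)$, I would expand both $\nu$-dependencies: the outer measure gives the local first-order term $\nabla_z\hat\ell(z_0,\xi)\cdot(z_1-z_0)$, while the argument $\xi=\int\hat\omega\,d\nu$ generates the non-local correction once one plugs in $\xi'-\xi=\int\Jac\hat\omega(z_0)(z_1-z_0)\,d\rho+O(W_2^2)$. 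The quadratic remainders are uniformly controlled by the $C^2$-bounds of $\hat{\mathcal F},\hat{\mathcal G},\hat\ell,\hat\omega$ on a compact set containing $\supp(\nu)\cup\supp(\nu')$, which exists by Lemma \ref{r-suppconv}. Summing, one recovers the subdifferential inequality with integrand $w(z_0)\cdot(z_1-z_0)$. Proposition \ref{t-minnorm} then gives $|\partial\H_c|(\nu)\le\|w\|_{L^2_\nu}$.

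The reverse inequality, and hence the claim that $w$ has minimal norm in $\partial\H_c(\nu)$, would come from the semiconvexity along geodesics obtained in Proposition \ref{l-semiconvex}. Invoking the general theory of subdifferentials for $\lambda$-convex functionals on $\mathcal{P}_2$ (as in \cite[Sec.~10.1.1 and Thm.~10.3.10]{AGS}), once $\partial\H_c(\nu)$ is nonempty it admits a unique element of minimal $L^2_\nu$-norm, this element lies in $T_\nu\mathcal{P}_2(\R^{2d})$, and its norm equals $|\partial\H_c|(\nu)$; combining with the previous bound forces $\|w\|_{L^2_\nu}=|\partial\H_c|(\nu)$ as soon as one checks that the specific element $w$ is in the tangent cone, which is immediate for the gradient-type terms $(\nabla\mathcal F)\star\nu+\nabla\mathcal G$ and follows for the non-local term by approximation in $L^2_\nu$ by smooth gradients (using compactness of $\supp(\nu)$). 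The main obstacle I anticipate is the rigorous handling of the non-local term in $\hat\H_c^2$: tracking that the mixed contribution $\nabla_\varsigma\hat\ell\cdot(\xi'-\xi)$ produces an integrand which, after Fubini and use of the plan $\rho$, can be rewritten as a \emph{pointwise} expression in $z_0$ matching the second line of $\nabla_\nu\mathcal{L}$, while controlling the remainder uniformly by $o(W_2(\nu,\nu'))$. A secondary subtlety is ensuring $w$ lies in the tangent cone, which is where compact support of $\nu$ and the $C^2$-regularity of $\ell,\omega$ are used.
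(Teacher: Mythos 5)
Your first half --- showing $w \in \partial\H_c(\nu)$ by expanding $\H_c$ along an optimal plan and controlling the remainder --- is essentially the paper's argument: the paper likewise computes the directional derivative of $\hat\H_c^1+\hat\H_c^2$ along $((1-s)\pi_1+s\pi_2)_{\#}\rho$ and uses the semiconvexity of Proposition \ref{l-semiconvex} to pass from the limit of difference quotients to the subdifferential inequality, which is exactly what your explicit $O(\W_2^2(\nu,\nu'))$ Taylor remainder would accomplish. Your observation that $w\in L^2_\nu$ is automatic for compactly supported $\nu$ is also correct (it is the content of the lemma preceding the theorem), and Proposition \ref{t-minnorm} then yields $|\partial\H_c|(\nu)\le\|w\|_{L^2_\nu}$, as in the paper.

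The gap is in the reverse inequality $\|w\|_{L^2_\nu}\le|\partial\H_c|(\nu)$. You outsource it to the abstract minimal-selection theory for $\lambda$-convex functionals, which forces you to show that $w$ lies in the tangent cone $\overline{\{\nabla\varphi:\varphi\in\mathcal{C}_c^\infty\}}^{L^2_\nu}$. This is not ``immediate,'' and your proposed fix --- approximating the non-local term in $L^2_\nu$ by smooth gradients --- does not work in general: the component $\nabla_{\varsigma}\ell\bigl(y,\pi_1(z),\int\omega\mu\bigr)\,\Jac\omega(\pi_1(z))$ of $\nabla_\nu\mathcal{L}$ is not a gradient field unless $\nabla_\varsigma\ell$ is constant in its second argument, and for non-atomic $\nu$ a field with a nontrivial divergence-free component cannot be approximated by gradients in $L^2_\nu$. (This is precisely the delicate point you flag at the end: the first variation of the non-local term naturally produces the \emph{averaged} quantity $\int\nabla_\varsigma\ell\,d\nu$ paired with $\Jac\omega(z_0)$, and only in that averaged form is the field manifestly a gradient.) The paper avoids tangency altogether: assuming $|\partial\H_c|(\nu)<\infty$, it differentiates $s\mapsto\H_c((\Id+s\xi)_{\#}\nu)$ for an \emph{arbitrary} smooth compactly supported vector field $\xi$ (not only gradients), obtains $\bigl|\int w\cdot\xi\,d\nu\bigr|\le|\partial\H_c|(\nu)\,\|\xi\|_{L^2_\nu}$ from the elementary bound $\W_2((\Id+s\xi)_{\#}\nu,\nu)\le s\|\xi\|_{L^2_\nu}$, and concludes by density of such $\xi$ in $L^2_\nu(\R^{2d};\R^{2d})$. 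You should replace your tangent-cone argument with this duality argument.
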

\bproof
We start by assuming that $\nu \in \mathcal{P}_2(\R^{2d})$ satisfies $|\partial \H_c|(\nu) < +\infty$ and proving that this implies that $w$ belongs to $L^2_{\nu}(\R^{2d})$ and that $\|w\|_{L^2_{\nu}} \leq |\partial \H_c|(\nu)$.
We compute the directional derivative of $\H_c$ along a direction induced by the transport map $Id + \xi$, where $\xi$ is a smooth function with compact support such that $\supp((Id + s\xi)_{\#}\nu) \subseteq \overline{B(0,R_T)}$ for any sufficiently small $s > 0$. If we denote with
\bqn
\mathcal{L}_1(s) & = & \ell(y,\pi_1(z_0)+ s(\pi_1 \circ \xi)(z_1), \mathsmaller \int \om d(\pi_1 \circ (Id + s\xi))_{\#}\nu), \nonumber \\
\mathcal{L}_2(s) & = & \ell(y,\pi_1(z_0), \mathsmaller \int \om d(\pi_1 \circ (Id + s\xi))_{\#}\nu),
\eqnn
then the map
\bqn
s &\mapsto& \frac{\mathcal{F}((z_0 - z_1) + s(\xi(z_0) - \xi(z_1))) - \mathcal{F}(z_0 - z_1)}{s} \nonumber \\
& & + \frac{\mathcal{G}(z_0 + s\xi(z_0)) - \mathcal{G}(z_0)}{s} - \frac{\mathcal{L}_1(s) - \mathcal{L}_2(s)}{s} - \frac{\mathcal{L}_2(s) - \mathcal{L}_2(0)}{s},
\eqnn
as $s \to 0$ converges to
\bqn
\nabla\mathcal{F}(z_0 - z_1) \cdot(\xi(z_0) - \xi(z_1) ) + \left( \nabla \mathcal{G}(z_0) - \nabla_{\nu} \mathcal{L}(z_0)\right)\cdot \xi(z_0).
\eqnn
Since $\nu$ has compact support, the dominated convergence theorem, the identity \r{e-intPart} and since $\nabla \mathcal{F}$ is odd, it holds
\bqn
+ \infty & > & \lim_{s \to 0} \frac{\H_c((Id + s\xi)_{\#}\nu) - \H_c(\nu)}{s} \nonumber \\
& = & \frac{1}{2} \int_{\R^{4d}} \nabla\mathcal{F}(z_0 - z_1) \cdot(\xi(z_0) - \xi(z_1) ) d\nu(z_0) d\nu(z_1) + \int_{\R^{2d}} \left( \nabla \mathcal{G}(z_0) - \nabla_{\nu} \mathcal{L}(z_0)\right)\cdot \xi(z_0) d\nu(z_0) \nonumber\\
& = & \int_{\R^{2d}} w(z_0)\cdot \xi(z_0) d\nu(z_0).
\eqnn
From the last inequality, the assumption that $|\partial \H_c|(\nu) < +\infty$ and using the trivial estimate
\bqn
\W_2((Id+s\xi)_{\#}\nu, \nu) \leq s \|\xi \|_{L^2_{\nu}},
\eqnn
we get
\bqn
\int_{\R^{2d}} w(z_0) \cdot \xi(z_0)d\nu(z_0) \leq |\partial \H_c|(\nu) \|\xi\|_{L^2_{\nu}},
\eqnn
and hence, changing the sign of $\xi$,
\bqn
\left|\int_{\R^{2d}} w(z_0)\cdot \xi(z_0) d\nu(z_0) \right| \leq |\partial \H_c|(\nu) \|\xi\|_{L^2_{\nu}}.
\eqnn
This proves that $w \in L^2_{\nu}(\R^{2d})$ and that $\|w\|_{L^2_{\nu}} \leq |\partial \H_c|(\nu)$.

We now prove that if the vector $w$ belongs to $L^2_{\nu}(\R^{2d})$, then it is in the subdifferential of $\H_c$; this shall imply, by \eqref{e-estAbNorm}, that $w \in D(|\partial \H_c|)$ and that it is a minimal selection $\partial \H_c(\nu)$, by the previous estimate and Proposition \ref{t-minnorm}.

We thus consider a test measure $\overline{\nu}$, a plan $\rho \in \Gamma_o(\nu, \overline{\nu})$, and we compute the directional derivative of $\H_c$ along the direction induced by $\rho$. Denoting with
\bqn
\mathcal{L}_1(s) & = & \ell(y,(1-s)z_0+ s z_1, \mathsmaller \int \om d((1-s)\pi_1 + s\pi_2)_{\#}\rho), \nonumber \\
\mathcal{L}_2(s) & = & \ell(y,z_0, \mathsmaller \int \om d((1-s)\pi_1 + s\pi_2)_{\#}\rho),
\eqnn
for every $s \in [0,1]$, then the map
\bqn
s &\mapsto& \frac{\mathcal{F}((1-s)(z_0 - \overline{z}_0) + s(z_1 - \overline{z}_1)) - \mathcal{F}(z_0 - \overline{z}_0)}{s} \nonumber \\
&&+ \frac{\mathcal{G}((1-s)z_0 + s z_1) - \mathcal{G}(z_0)}{s} - \frac{\mathcal{L}_1(s) - \mathcal{L}_2(s)}{s} - \frac{\mathcal{L}_2(s) - \mathcal{L}_2(0)}{s},
\eqnn
as $s \to 0$ converges to
\bqn
\nabla\mathcal{F}(z_0 - \overline{z}_0)\cdot \left((z_1 - z_0) - (\overline{z}_1 - \overline{z}_0) \right) + \left( \nabla \mathcal{G}(z_0) - \nabla_{\nu} \mathcal{L}(z_0)\right) \cdot (z_1 - z_0).
\eqnn
Hence, from Proposition \ref{l-semiconvex}, the dominated convergence theorem, the identity \r{e-intPart} and since $\nabla \mathcal{F}$ is odd, we get
\bqn
\H_c(\overline{\nu}) - \H_c(\nu) & \geq & \lim_{s \to 0} \frac{\H_c(((1-s)\pi_1 + s\pi_2)_{\#}\rho) - \H_c(\nu)}{s} + o(\W_2(\overline{\nu},\nu)) \nonumber \\
& = & \frac{1}{2} \int_{\R^{8d}}\nabla\mathcal{F}(z_0 - \overline{z}_0)\cdot \left((z_1 - z_0) - (\overline{z}_1 - \overline{z}_0) \right) d\rho(z_0,z_1)d\rho(\overline{z}_0,\overline{z}_1) \nonumber\\
& & + \int_{\R^{4d}}\left( \nabla \mathcal{G}(z_0) - \nabla_{\nu} \mathcal{L}(z_0)\right) \cdot (z_1 - z_0)  d\rho(z_0,z_1)+ o(\W_2(\overline{\nu},\nu)) \nonumber \\
& = & \int_{\R^{4d}} w(z_0)\cdot(z_1 - z_0) d\rho(z_0,z_1) + o(\W_2(\overline{\nu},\nu)).
\eqnn
We have thus proven that $w \in \partial \H_c(\nu)$.
\eproof

\section{Proof of Theorem \ref{t-main}}
\label{s-proof}

In this section, we prove Theorem \ref{t-main}. We first recall that %Proposition \ref{p-embed} is a particular case of Theorem \ref{t-main}, when the initial datum $\mu^0$ is an empirical measure of the form $\frac1N \sum_{i=1}^N \delta(x-x_i^0)$. On the other side,
we already proved in Corollary \ref{c-gamma} that there exists a mean-field optimal control for Problem \ref{problemPDE}. %has solutions $(u^*,y^*,\mu^*)$, that can be constructed as limits of solutions $(u^*_N,y^*_N,\mu^*_N)$
We now want to prove that all mean-field optimal controls are solutions of the extended PMP.

%Fix $y^0\in \R^{dm}$, and a measure $\mu^0\in\PP(\R^d)$ with compact support in $B(0,R)$.
Let $u^*$ be a mean-field optimal control for Problem \ref{problemPDE} with initial datum $(y^0,\mu^0)$. Fix $\mu_N^0$ as in Definition \ref{d-mfoc}--\ref{uno}, and consider a sequence $(u^*_N)_{N \in \N}$ of optimal controls of Problem \ref{problemODE} with initial datum $(y^0,\mu^0_N)$, having a subsequence (which, for simplicity, we do not relabel) weakly converging to $u^*$ in $L^1([0,T];\mathcal{U})$. Denote with $(y^*_N,x^*_N)$ the trajectory of \eqref{eq:discdyn} corresponding to the control $u^*_N$ and the initial datum $(y^0,\mu^0_N)$ of Problem \ref{problemODE}. Compute the corresponding pair control-trajectory $(u^*_N,(y^*_N,q^*_N,x^*_N,p^*_N))$ satisfying the PMP for Problem \ref{problemODE}, that exists due to Theorem \ref{PMPODE}. Set $\nu^*_N:= \Pi_N(x^*_N, p^*_N)$ and $r^*_N:=Np^*_N$. By Proposition \ref{p-boundedsupp}, the trajectories $(y^*_N,q^*_N,\nu^*_N)$ are equibounded and equi-Lipschitz from $[0,T]$ to the product space $\Y=\R^{2dm}\times\PP(\R^{2d})$ endowed with the distance \
eqref{e-Y}, and the empirical measures $\nu^*_N$ have equibounded support. Moreover, the pair $(u^*_N,(y^*_N,q^*_N,\nu^*_N))$ satisfies the extended PMP by Proposition \ref{p-embed}.
%We now prove that the trajectories $(y^*_N,q^*_N,\nu^*_N)$ are equibounded and equi-Lipschitz from $[0,T]$ to $\Y$, where the space $\Y:=\R^{2dm}\times\PP(\R^{2d})$ is endowed with the distance
%\bqn
%\|(y,q,\nu)-(y',q',\nu')\|_{\Y}=\|y-y'\| +\|q-q'\|+\W_1(\nu,\nu').
%\eqnl{e-Y}
%We shall also prove that the $\nu^*_N$ have equibounded support.

By the Ascoli-Arzel\`a theorem, we have that there exists a subsequence, which we denote again with $(y^*_N,q^*_N,\nu^*_N)$, that converges to $(y^*,q^*, \nu^*):[0,T]\to \R^{dm}\times\PP(\R^{2d})$ uniformly with respect to $t \in [0,T]$. Since by definition $\pi_{1\#}\nu^*_N=\mu^*_N$, by the convergence of $\mu^*_N$ to $\mu^*$ proved in Proposition \ref{p-esistenza}, we get $\pi_{1\#}\nu^*=\mu^*$. Observe that $(y^*,q^*, \nu^*)$ is a Lipschitz function with respect to time and $\nu^*$ has support contained in $B(0,R_T)$ for all $t\in[0,T]$. Moreover, by the boundary conditions for each $N$, we have that $y^*(0)=y^0$, $\pi_{1\#}(\nu^*(0))=\mu^0$ and $q^*(T)=0$, $\pi_{2\#}(\nu^*(T))(r)=\delta(r)$.

Fix now $t\in [0,T]$. To shorten notation, let $E\colon \R^{dm}\times \R^D\to \R$ be the functional, strictly concave with respect to $u$, defined as
$$
E(q,u)= \sum_{k=1}^m q_k\cdot B_k u -\gamma(u)\,.
$$
Recall that by \eqref{eq:PMPdiscrete} and by Remark \ref{r-u}, $u^*_N(t)$ satisfies
$$
u^*_N(t)=\arg\max_{u\in \mathcal U} E(q^*_N(t), u)\,,
$$
since the maximum is uniquely determined by strict concavity. Since $\mathcal U$ is bounded, by definition $E(\cdot, u)$ is continuous uniformly with respect to $u \in \mathcal U$. The convergence of $q^*_N(t)$ to $q^*(t)$ then implies that every accumulation point $v_t\in \mathcal U$ of $u^*_N(t)$ must satisfy 
\begin{align}\label{argmax}
v_t=\arg\max_{u\in \mathcal U} E(q^*(t), u)
\end{align}
and is therefore uniquely determined. This shows that the sequence $u^*_N$ is pointwise converging in $[0,T]$ to the function $v(t):=v_t$. Due to the boundedness of $\mathcal U$, we further have that $u^*_N \to v$ in $L^1((0,T); \mathcal U)$. Since $u^*_N$ was already converging to $u^*$ weakly in $L^1((0,T); \mathcal U)$ it must be $u^*(t)= v(t)$ for a.e.\ $t\in (0,T)$, which together with \eqref{argmax} implies that
\begin{align}\label{L1forte}
u^*_N \to u^* \hbox{ strongly in }L^1((0,T); \mathcal U) 
\end{align}
and that
$$
u^*(t)=\arg\max_{u\in \mathcal U} E(q^*(t), u)
$$
for a.e.\ $t\in [0,T]$. Due to the explicit expression of $\H(y,q,\nu, u)$ in \eqref{e-H}, this is equivalent to say that
$$
\H(y^*(t),q^*(t),\nu^*(t),u^*(t))=\arg \max_{u\in \mathcal U}\H(y^*(t),q^*(t),\nu^*(t),u)
$$
for a.e.\ $t\in [0,T]$.

%(GIÀ SCRITTO)Eventually passing to another subsequence in the bounded space of controls $L^1([0,T],\U)$, we also have that the pair control-trajectory $(u_N,(y_N,q_N,\nu_N))$ converges to $(u_\infty,(y_\infty,q_\infty,\nu_\infty))$. We are now left to prove that $(u_N,(y_N,q_N,\nu_N))$ satisfy the extended PMP. We already showed in Corollary \ref{c-gamma} that $(u_\infty,y_\infty,\mu_\infty)$ with $\mu_\infty=\pi_1(\nu_\infty)$ is a solution of Problem \ref{problemPDE}, with boundary conditions $y_\infty(0)=y^0$, $\mu_\infty(0)=\mu^0$, hence a solution exists. We have compact support of $\nu_\infty$ and $\mu_\infty=\pi_1(\nu_\infty)$. It is also clear that $(y_N,q_N,\nu_N)$ satisfies boundary conditions.

We finally prove that $(y^*,q^*,\nu^*)$ satisfies the Hamiltonian system \r{e-PMP} with control $u^*$. Due to equi-Lipschitz continuity, we have that the derivatives  $(\dot y^*_N,\dot q^*_N)$, and $\partial_t\nu^*_N$ converge to $(\dot y^*,\dot q^*)$, and $\partial_t \nu^*$, respectively, weakly in $L^1([0,T];\R^{2md})$ and in the sense of distributions.
Observe now that by \eqref{e-gradx} and \eqref{e-gradr} the vector field $\nabla_{\nu} \H_c(y,q,\nu)(\cdot,\cdot)$, which is independent of $u$, is continuously depending on $(y,q,\nu)$. By the uniform convergence of $(y^*_N,q^*_N,\nu^*_N)$ and since ${\rm supp}(\nu^*_N(t))\subset B(0, R_T)$ for all $t\in [0,T]$ we get that
$$
\nabla_{\nu} \H_c(y^*_N(t),q^*_N(t),\nu^*_N(t))(x,r)\rightrightarrows \nabla_{\nu} \H_c(y^*(t),q^*(t),\nu^*(t))(x,r)
$$
uniformly with respect to $t\in [0,T]$ and $(x,r)\in B(0, R_T)$. From this, using again the narrow convergence of $\nu^*_N(t)$ to $\nu^*(t)$ and since ${\rm supp}(\nu^*_N(t))\subset B(0, R_T)$, we then get the uniform bound
$$
\|\left(J\nabla_{\nu} \H_c(y^*_N(t),q^*_N(t),\nu^*_N(t))\right)\nu^*_N(t)\|_{M_b(\R^D, \R^D)}\le C_T,
$$
for some constant $C_T$ independent of $t\in [0,T]$, as well as the narrow convergence
$$
\left(J\nabla_{\nu} \H_c(y^*_N(t),q^*_N(t),\nu^*_N(t))\right)\nu^*_N(t)\rightharpoonup\left(J\nabla_{\nu} \H_c(y^*(t),q^*(t),\nu^*(t))\right)\nu^*(t)
$$
for all $t\in [0,T]$. Testing with functions $\phi \in \mathcal{C}^\infty_c([0,T]\times \R^{2d};\R)$, the two above properties are enough to show that
$$
\nabla_{(x,r)}\cdot \Pt{(J\nabla_{\nu}\H_c(y^*_N(t),q^*_N(t),\nu^*_N(t)))\nu^*_N(t)}\rightharpoonup\nabla_{(x,r)}\cdot \Pt{(J\nabla_{\nu}\H_c(y^*(t),q^*(t),\nu^*(t)))\nu^*(t)}
$$
in the sense of distributions, so that $\nu^*$ solves the third equation in \eqref{e-PMP}.

For all $k=1,\dots, m$,  taking derivatives in the explicit expression in \eqref{e-H} and using the definition of $\H_c$, we have that $\nabla_{y_k}\H_c(y,q,\nu, u)$ is actually independent of $u$ and is continuous with respect to the Euclidean convergence on $(y,q)$ and the narrow convergence on measures $\nu$ with compact support in a fixed ball $B(0, R_T)$. Therefore, since $(y^*_N,q^*_N,\nu^*_N)$ converges to $(y^*,q^*, \nu^*)$ uniformly with respect to $t \in [0,T]$, and there is no dependence on $u$, for all $k=1,\dots, m$ we have that
$$
\nabla_{y_k}\H_c(y^*_N(t),q^*_N(t),\nu^*_N(t), u^*_N(t))\to \nabla_{y_k}\H_c(y^*(t),q^*(t),\nu^*(t), u^*(t))
$$
in $\R^d$ uniformly with respect to $t \in [0,T]$. It then follows that $q^*$ solves the second equation in \eqref{e-PMP}.

A similar argument, also using the $L^1$ convergence of $u^*_N$ to $u^*$ proved in \eqref{L1forte}, shows that
$$
\nabla_{q_k}\H_c(y^*_N(t),q^*_N(t),\nu^*_N(t), u^*_N(t))\to \nabla_{q_k}\H_c(y^*(t),q^*(t),\nu^*(t), u^*(t))
$$
in $L^1([0,T];\R^d)$ for all $k=1,\dots, m$, so that $y^*$ solves the first equation in \eqref{e-PMP}. This concludes the proof of Theorem \ref{t-main}.

\section{An example}\label{s-CS}
In this section, we show the application of the extended Pontryagin Maximum Principle to a toy model for crowd interactions. The Cucker-Smale model, introduced in \cite{CS}, was first studied in its mean-field limit form in \cite{ha2008from}. It models the phenomenon of alignment of velocities in crowds, that can be observed, e.g., in flocks of birds.

In this model, each agent is identified by its position $x_i$ and velocity $v_i$, and it adjusts its velocity by relaxing it towards a weighted mean of the velocities of the group. The weight is a nonincreasing function $\phi$ of the distance between individuals. In the original paper \cite{CS}, the authors propose $\phi(\lambda)=\frac{K}{(\sigma^2+\lambda^2)^\beta}$, for some fixed parameters $K,\sigma>0$ and $\beta\geq 0$. For our computations, we consider $\phi\in \mathcal{C}^2(\R^d,\R^+)$ being a radial function.

The finite-dimensional dynamics is given by the ODE system
\begin{align*}
\begin{cases}
\begin{aligned}
\dot x_i&=v_i, \\
\dot v_i&=\frac{1}{N}\sum_{j = 1}^N \phi(x_i-x_j) (v_j-v_i),
\end{aligned}\quad \quad i = 1, \ldots, N.
\end{cases}
\end{align*}
We add to it $m$ leaders with positions and velocities given by $(y_k,w_k)$ for every $k = 1, \ldots, m$, on which a control variable $u_k$ is active. Since the control acts as an external force, $u_k$ will directly affect the evolution of the velocities $w_k$ only. The mean-field limit for $N\to+\infty$ of the resulting system is given by (see, e.g., \cite{fornasier2014mean})
\begin{align}\label{e-CS}
\begin{cases}
\begin{aligned}
\dot y_k&=w_k,\\
\dot w_k&=(\Phi\star\mu)(y_k,w_k)+\frac{1}{m}\sum_{j = 1}^m \phi(y_k-y_j) (w_j-w_k)+u_k,\\
\end{aligned} \quad \quad \quad k=1,\ldots, m\\
\begin{aligned}
\partial_t \mu&=- v\cdot\nabla_x\mu-\nabla_v \cdot \left[\left(\Phi\star\mu+\frac{1}{m}\sum_{j = 1}^m \phi(x-y_j) (w_j-v)\right)\mu\right],
\end{aligned} 
\end{cases}
\end{align}
where $\mu=\mu(x,v)$ is the density of followers and $\Phi(x,v):=\phi(x)(-v)$. Notice, this is a particular case of \r{eq:mfdyn}, where the state variables for the leaders are ${\bf y}_k:=\Pt{\ba{c}y_k\\w_k\ea}$ and ${\bf y}=\Pt{{\bf y}_1,\ldots, {\bf y}_m}$, the ones for the followers are ${\bf x}=\Pt{\ba{c} x\\v\ea}$, and one chooses
\begin{align*}
K({\bf x}):=\Pt{\ba{c}0\\ \Phi({\bf x})\ea}, &\quad f_k({\bf y})=\Pt{\ba{c}w_k\\ \frac{1}{m}\sum_{j = 1}^m \phi(y_k-y_j) (w_j-w_k)\ea},\\
g({\bf y})({\bf x})=&\Pt{\ba{c}v\\ \frac{1}{m}\sum_{j = 1}^m \phi(x-y_j) (w_j-v)\ea},
\end{align*}
%\quad B_k:=\Pt{\ba{cc} {\bf 0}&{\bf 0}\\
%{\bf 0}&{\bf 0}\\
%\ldots&\ldots\\
%{\bf 0}&{\bf \Id}\\
%\ldots&\ldots\\
%{\bf 0}&{\bf 0}\\
%\ea},
%where $\mu_m=\frac1m \sum_j\delta(\cdot-{\bf y_j})$ is the empirical measure of the leaders and ${\bf 0}$ and ${\bf \Id}$ are the zero and identity matrices of dimension $d\times d$, respectively.
and, for every $k = 1, \ldots, m$, $B_k$ is the $2d \times (dm)$ matrix that maps $u=\Pt{\ba{c}u_1\\ \dots \\u_m\ea} \in \R^{dm}$ into the element $\Pt{\ba{c}\textbf{0}\\u_k\ea}\in \R^{2d}$. Notice that since $\phi$ is a radial function, the function $\Phi$, and thus $K$, is odd.

A standard problem in the study of the Cucker-Smale model is to find conditions to ensure flocking, i.e., alignment of the whole crowd to the same velocity. For this reason, it is interesting in our case to study the minimization of the variance\footnote{For simplicity of computation, we consider minimization of 4 times the variance.} of the crowd, by choosing
\begin{align}
\begin{aligned}\label{e-CScost}
L({\bf y},\mu)&:=\int_{\R^{2d}}\Pt{ \frac{2}{m}\sum^m_{k = 1} \|w_k\|^2+2\|v\|^2}\,d\mu(x,v)-\left\|\frac{1}{m}\sum^m_{k = 1} w_k+ \int v\,d\mu(x,v)\right\|^2\\
&=\int_{\R^{2d}} \Pt{\frac{2}{m}\sum^m_{k = 1} \|w_k\|^2+2 \|v\|^2-\Pt{\frac{1}{m}\sum^m_{k = 1} w_k+ \int v'\,d\mu(x',v')}\cdot\Pt{ \frac{1}{m}\sum^m_{k = 1} w_k+ v}}\,d\mu(x,v),
\end{aligned}
\end{align}
that is of the form $L=\int_{\R^{2d}} \ell({\bf y},{\bf x},\int \omega\mu)\,d\mu({\bf x})$ by choosing $\omega({\bf x})=v$ and
\bqn
\ell({\bf y},{\bf x},\varsigma)=\frac{2}{m}\sum^m_{k = 1} \|w_k\|^2+2 \|v\|^2-\Pt{\frac{1}{m}\sum^m_{k = 1} w_k+ \varsigma}\cdot\Pt{ \frac{1}{m}\sum^m_{k = 1} w_k+ v}.
\eqnn
For the control constraints, we assume $\mathcal U:=[-1,1]^{dm}$ and we choose to penalize the $L^2$-norm of the control, hence $\gamma(u):=\|u\|^2$.
\brem Other forms for the cost $L$ can be of interest. For example, on may want to drive the crowd to a {\it given} fixed velocity $\bar v$. In this case, one can minimize
\bqn
L_1({\bf y},\mu)&:=&\int_{\R^{2d}}\Pt{ \frac{1}{2m}\sum^m_{k = 1} \|w_k-\bar v\|^2+\frac12 \|v-\bar v\|^2}\,d\mu(x,v),
\eqnn
that is again of the form $\int_{\R^{2d}} \ell({\bf y},{\bf x},\int \omega\mu)\,d\mu({\bf x})$, with $\ell$ not depending on its third variable, this time.
\erem
Since Hypotheses (H) are clearly satisfied, we now apply the extended Pontryagin Maximum Principle to the optimal control problem with cost functional \r{e-CScost} constrained by the system \r{e-CS}. For simplicity of notation, we study the 1-dimensional problem, i.e., $d=1$. We introduce the dual variables of ${\bf y}_k$ and ${\bf x}$ denoted by ${\bf q}_k=(q_k,z_k)$ and ${\bf r}=(r,s)$, respectively. The Hamiltonian $\H$ in \r{e-H} can be found by direct substitution:
\newcommand{\dnu}{\,d\nu(x,v,r,s)}
\newcommand{\dnup}{\,d\nu(x',v',r',s')}
\bqn
\H({\bf y},{\bf q},\nu, u)&=&\frac12\int_{\R^{8}}(s-s')\phi(x-x')(v'-v)\dnup\dnu\nn
&&+\int_{\R^{4}}\left(rv+s\frac{1}{m}\sum_{j = 1}^m \phi(x-y_j) (w_j-v)\right)\dnu\nn
&&+\sum_{k=1}^m \Pt{q_kw_k + z_k\int_{\R^{4}} \phi(y_k-x)(v-w_k)\dnu}\nn
&&+\sum_{k=1}^m \Pt{z_k\frac{1}{m}\sum_{j = 1}^m \phi(y_k-y_j) (w_j-w_k)+z_k u_k}\nn
&&-\int_{\R^{2}}\ell({\bf y},{\bf x},\mbox{$\int\omega\mu$})\,d\mu({\bf x})-|u|^2.
\eqnn
The optimal control can be explicitly computed by \r{e-u} as follows
\bqn
u^*_k({\bf y},{\bf q})=\begin{cases}
\frac{1}{2}z_k&\mbox{~~if } z_k\in[-2,2],\\
{\rm sign}(z_k)&\mbox{~~otherwise}.
\end{cases}
\eqnn
Denoting with $\mu$ the first marginal of $\nu$, the PMP dynamics of the state and adjoint variables is given by
\begin{align*}
\begin{cases}
\dot y_k=&w_k,\\
\dot w_k=&(\Phi\star\mu)(y_k,w_k)+\frac{1}{m}\sum_{k = 1}^m \phi(y_k-y_j) (v_j-v_k)+u^*_k({\bf y},{\bf q}),\\
\dot q_k=&\frac1m \int_{\R^4} s \phi'(x-y_k)(w_k-v)\dnu-z_k\int_{\R^4} \phi'(y_k-x)(v-w_k)\dnu\\
&-\frac{1}{m}\sum_{j\neq k} z_j\phi' (y_k-y_j)(w_j-w_k),\\
\dot z_k=& -\int_{\R^4} \Pt{\frac1m s-z_k}\phi(x-y_k)\dnu-q_k+\sum_{j\neq k} z_j\phi(y_k-y_j)\\
&-\frac{4}{m} w_k+\frac{2}{m^2} \sum_{j = 1}^m w_j +\frac{2}{m}\int_{\R^2} v\,d\mu(x,v),\\
\partial_t\nu=&-\nabla_{(x,v,r,s)}\cdot\Pt{(J\nabla_\nu \H_c({\bf y},{\bf q},\nu, u^*))\nu},
\end{cases}
\end{align*}
where the components of the vector field $\nabla_\nu \H_c({\bf y},{\bf q},\nu, u^*)$ are given at every point $(x,v,r,s) \in \R^4$ by
\begin{align*}
\nabla_\nu \H_c\cdot e_1=&\int_{\R^4}(s-s')\phi'(x-x')(v'-v)\dnup+s\frac{1}{m}\sum_{j = 1}^m \phi'(x-y_j) (w_j-v)\\
&-\sum_{k=1}^m z_k \phi'(y_k-x)(v-w_k),\\
\nabla_\nu \H_c\cdot e_2=&-\int_{\R^4}(s-s')\phi(x-x')\dnup+r-s\frac{1}{m}\sum_{j = 1}^m \phi(x-y_j)+\sum_{k=1}^m z_k\phi(y_k-x)-3v\\
&+\frac{2}{m}\sum^m_{k =1}w_k+\int_{\R^2} v d \mu(x,v),\\
\nabla_\nu \H_c\cdot e_3=&\;v,\\
\nabla_\nu \H_c\cdot e_4=&\;(\Phi\star\mu)(x,v)+\frac{1}{m}\sum_{j = 1}^m \phi(x-y_j) (w_j-v).
\end{align*}

We remark that, as it happens for the standard PMP, the explicit computation of the third and the fourth components gives exactly the vector field determining the dynamics of $\mu$, the first marginal of $\nu$, in accordance with \eqref{e-CS}.

\section*{Acknowledgement}

The authors acknowledge the support of the PHC-PROCOPE Project ``Sparse Control of Multiscale Models of Collective Motion''. Mattia Bongini and Massimo Fornasier additionally acknowledge the support of the ERC-Starting Grant Project ``High-Dimensional Sparse Optimal Control''.

\bibliographystyle{abbrv}
\bibliography{bibliopmp}

\end{document}